\definecolor{shadecolor}{gray}{0.95}
\declaretheoremstyle[
headfont=\normalfont\bfseries,
notefont=\mdseries, notebraces={(}{)},
bodyfont=\normalfont,
postheadspace=0.5em,
spaceabove=1pt,
mdframed={
  skipabove=8pt,
  skipbelow=8pt,
  hidealllines=true,
  backgroundcolor={shadecolor},
  innerleftmargin=4pt,
  innerrightmargin=4pt}
]{shaded}
\newcommand{\R}{\mathbb{R}} 
\newcommand{\cO}{{\cal O}}
\newcommand{\mb}{\tau}
\newcommand{\eqdef}{\overset{\text{def}}{=}} 
\DeclareMathOperator{\argmin}{argmin}        
\newcommand{\Expnot}{{\bf E}}
\newcommand{\Prob}{{\bf P}}
\newcommand{\Exp}[1]{{\bf E}\left[#1\right] }    
\declaretheorem[style=shaded,within=section]{definition}
\declaretheorem[style=shaded,sibling=definition]{theorem}
\declaretheorem[style=shaded,sibling=definition]{proposition}
\declaretheorem[style=shaded,sibling=definition]{assumption}
\declaretheorem[style=shaded,sibling=definition]{corollary}
\declaretheorem[style=shaded,sibling=definition]{lemma}
\theoremstyle{remark}
\newtheorem{remark}{Remark} 
\newcommand{\algD}{1} 
\newcommand{\algR}{2} 
\newcommand{\algE}{3} 
\newcommand{\algS}{4} 
\newcommand{\ones}{{\bf 1}}
\newcommand{\oo}[1]{\quad #1 \quad}
\title{Fastest Rates for Stochastic Mirror Descent Methods\thanks{All theoretical results of this paper were obtained by June 2017. }}
\author{Filip Hanzely\footnote{King Abdullah University of Science and Technology (KAUST), Thuwal, Saudi Arabia} \and Peter Richt\'{a}rik\footnote{King Abdullah University of Science and Technology (KAUST), Thuwal, Saudi Arabia --- University of Edinburgh, Edinburgh, United Kingdom --- Moscow Institute of Physics and Technology (MIPT), Dolgoprudny, Russia}}
\date{March 20, 2018}
\theoremstyle{definition}
\begin{document}
\maketitle

\begin{abstract} 
 Relative smoothness - a notion introduced in \cite{birnbaum2011distributed} and recently rediscovered in \cite{bauschke2016NoLips, Rel_smoothness_nesterov} - generalizes the standard notion of smoothness typically used in the analysis of gradient type methods. In this work we are taking ideas from well studied field of stochastic convex optimization and using them in order to obtain faster algorithms for minimizing relatively smooth functions. We propose and analyze two new algorithms: Relative Randomized Coordinate Descent (relRCD) and Relative Stochastic Gradient Descent (relSGD), both generalizing famous algorithms in the standard smooth setting. The methods we propose can be in fact seen as a particular instances of stochastic mirror descent algorithms. One of them, relRCD corresponds to the first stochastic variant of mirror descent algorithm with linear convergence rate. 
\end{abstract}

\section{Introduction}

During  the last decade or so, {\em first order methods} have become the main algorithmic toolbox for practitioners  solving  optimization problems of large sizes, especially in application domains where low to medium accuracy is  sufficient. These methods are now the state of the art for many problems arising in areas such as machine learning,  statistics, signal processing, computer vision, inverse problems and data science. Arguably, algorithms for {\em smooth convex optimization} form the backbone of this new development, and the basis for subsequent extensions beyond convexity and smoothness.

In this paper we consider the optimization problem
\begin{eqnarray}
\min  && f(x)
\label{Eq:optimization_problem}
\\
\nonumber
\text{subject to} && x\in Q,
\end{eqnarray}
where $Q\subseteq \R^n$ is a closed convex set, and $f$ is  a convex and differentiable\footnote{We assume that $f$ is differentiable on some open set containing $Q$.} (objective/loss) function.

Our work is motivated by the need to solve problems of the form \eqref{Eq:optimization_problem} in the ``big data'' regime, that is, in situations when either the dimensionality of the problem, $n$, is very large, or when $f$ is of a finite sum structure, \begin{equation} \label{eq:kjhbd78ge8y}f(x)=\frac{1}{m}\sum_{i=1}^m f_i(x),\end{equation}
with the number of components, $m$, being very large. In particular, we are interested in designing efficient {\em randomized} first order methods for  \eqref{Eq:optimization_problem} without the need to assume for $f$ to have Lipschitz gradients, thus extending the reach of modern randomized gradient-type methods to new territories.

\subsection{Lipschitz continuity of the gradients}

It is remarkable that virtually the entire development of first order methods for smooth convex optimization hinges on what turns out to be a very restrictive regularity assumption on the behaviour of the gradients of $f$, thus preventing their applicability to domains where this assumption does not hold, or is unreasonable due to practical considerations. In particular, it is universally assumed for the objective function $f$ to have Lipschitz continuous  gradients \cite{nemirovsky1983problem, poljak1987introduction, NesterovBook}.  Recall that $f$ is said to be {\em $L$-smooth} on $Q$ (equivalently, we say that  {\em the gradient of $f$ is $L$-Lipschitz} on $Q$), if \begin{equation}\label{eq:iuhosi8hif}
f(x) \leq f(y) + \langle \nabla f(y), x-y\rangle + \frac{L}{2}\|x-y\|^2, \quad \text{for all} \quad x,y\in Q,\end{equation}
where $\langle \cdot,\cdot \rangle$ is an inner product and $\|x\|=\langle x,x\rangle ^{1/2}$ is the induced norm\footnote{An  equivalent characterization of $L$-smoothness is to require the inequality $\|\nabla f(x)-\nabla f(y)\| \leq L\|x-y\|$ to hold for all $x,y\in Q$. }. 

The archetypal first order method for solving \eqref{Eq:optimization_problem}, {\em projected gradient descent} (GD), is  designed to take  advantage of the approximation \eqref{eq:iuhosi8hif}. Given  $x_t\in Q$, the next iterate $x_{t+1}$ of GD is obtained by minimizing the upper {\em quadratic} bound on $f$ provided by \eqref{eq:iuhosi8hif} for $y=x_t$:
\[\boxed{x_{t+1} = \arg\min_{x\in Q} \; \langle \nabla f(x_t), x-x_t \rangle + \frac{L}{2}\|x-x_t\|^2}\]
That is, in the design of GD, one employs a majorize-minimize approach \cite{Lange-MM-book}.

However, there are many differentiable convex functions  which are not $L$-smooth for any finite $L$. For instance, consider the function $f(x)=x^4$ on $\R$. If we still wish to apply a gradient type method to minimize such a function, $L$-smoothness can sometimes be forced upon $f$ by introducing appropriate constrains. This is sufficient in principle as the theory for constrained first order methods only requires the gradients to be $L$-Lipschitz on the domain of interest. However,  such a restriction often leads to a very large constant $L$ in practice, which leads to a prohibitive slow-down of the methods, unless line search strategies are used. Indeed,  the performance of first order type methods deteriorates as $L$ grows, typically  at a linear or quadratic rate. Moreover, even if the objective is naturally $L$-smooth, the constant $L$ is often very large, reflecting poor conditioning of the  problem. In all these cases, direct application of first-order machinery is either impossible or prohibitively inefficient, which leaves these problems beyond the reach of some of the most efficient algorithms designed for large problems in the last decade.

\subsection{Relative smoothness: beyond Lipschitz continuity} 

Relative smoothness was first introduced in~\cite{birnbaum2011distributed} and later rediscovered  independently~\cite{bauschke2016NoLips} and~\cite{Rel_smoothness_nesterov}

This notion enables to design and analyze a generalized version of GD which we  refer to in this paper as {\em relative gradient descent} (relGD). We shall now briefly outline their approach.

Let $h:Q\to \R$ be a strictly convex and differentiable function. The Bregman distance (divergence) of $h$ is the function
\begin{equation}
D_h(x,y) \eqdef h(x)-h(y)-\langle \nabla h(y),x-y\rangle. 
\label{eq:bregman_def}
\end{equation}
Clearly, $D_h(x,y)\geq 0$ and $D_h(x,y)=0$ if and only if $x=y$. However, $D_h$ is not necessarily symmetric. 

 In analogy with \eqref{eq:iuhosi8hif}, Bauschke et al~\cite{bauschke2016NoLips} and Lu et al~\cite{Rel_smoothness_nesterov} say that $f$ is {\em $L$-smooth relative to $h$ on $Q$}  if
\begin{equation}\label{eq:relsmooth}
f(x) \leq f(y) +\langle \nabla f(y),x-y \rangle+LD_h(x,y), \quad \text{for all}\quad x,y\in Q.
\end{equation}


In analogy with the design of gradient descent, relative gradient descent minimizes the upper bound on $f$ given by \eqref{eq:relsmooth} for $y=x_t$:
\begin{equation}\label{eq:iug89fg98he} \boxed{x_{t+1} = \arg\min_{x\in Q} \; \langle \nabla f(x_t), x-x_t \rangle + L D(x,x_t)}\end{equation}

Note that if $h(x)=\frac12\|x \|^2$, then     $D_h(x,y)=\frac{1}{2}
\|x-y \|^2$, and $L$-smoothness relative to $h$ defined in \eqref{eq:relsmooth} coincides with standard $L$-smoothness defined in  \eqref{eq:iuhosi8hif}. Likewise, relative gradient descent coincides with gradient descent.



\subsection{Introducing randomness}

For problems of truly huge sizes (if, as alluded to earlier, either $m$ or $n$ are very large),  {\em randomized} first order methods, such as variants of stochastic gradient descent \cite{RobbinsMonro:1951,nemirovski2009robust, shai_book, sag, SDCA,  kingma2014adam} (in case of large $m$) and randomized coordinate descent (in case of large $n$) \cite{nesterov2012efficiency, UCDC, PCDM, SDCA, ALPHA}, have become the methods of choice, both in theory and in practice.
 
While a single  iteration of a randomized method typically leads to small improvement relative to the improvement obtained by its deterministic counterpart, stochastic iterations are in general much faster: for problems of suitable structure, each iteration is typically $n$ (for randomized coordinate descent type methods) or $m$ (for stochastic gradient descent type methods) times faster than one iteration of gradient descent. The trade-off is in favour of stochastic methods: the savings obtained by performing faster iterations outweigh the loss incurred by settling with smaller per-iteration improvements.


\subsection{Contributions}

In this paper we develop the first stochastic algorithms for minimizing relatively smooth functions. In so doing, we push the boundary of  big data optimization beyond the realm of $L$-smoothness.

All methods developed in this work are of the form
\begin{equation}
\boxed{x_{t+1} = \argmin_{x\in Q_t}\left\{\langle g_t,x\rangle+L_tD_h(x,x_t)\right\}}
\label{next^{(i)}terate}
\end{equation} 
for suitable set $Q_t\subset \R^n$, vector $g_t \in \R^n$ and a sequence of stepsizes $\{L_t\}$. Note that by choosing $g_t=\nabla f(x_t)$, $L_t=L$ and $Q_t=Q$, we obtain method \eqref{eq:iug89fg98he}, i.e., relative gradient descent \cite{bauschke2016NoLips,Rel_smoothness_nesterov}.

We prove convergence of different success measures, including expected suboptimality in the objective, Bregman distance to the optimum, and Bregman distance between iterates. Below we briefly outline some of the results obtained.

Our algorithms  belong to two categories:

\paragraph{Relative Randomized Coordinate Descent (relRCD).} This arises as a special  of the generic method \eqref{next^{(i)}terate} if we choose $g_t=\nabla f(x_t)$, pick suitable stepsizes $L_t$, and let $Q_t$ correspond to a search space generated by  a  random subset of coordinates chose at iteration $t$.  This work can be seen as combining some of the ideas contained in  works on parallel/minibatch coordinate descent \cite{PCDM, NSync, ALPHA} and extending them to the relatively smooth setting. 

We first introduce a basic variant, which uses conservative (small) stepsizes $L_t=L$ (for $Q=\R^n$ thsi would result in stepsize $1/L$). We then perform a more detailed analysis by introducing an  ESO (expected  separable overapproximation) inequality \cite{PCDM, NSync, ESO} applicable to relatively smooth functions. This allows us to choose larger stepsizes $L_t\leq L$, leading to better convergence rates. 
In particular, under a relative strong convexity assumption (see Equations \eqref{eq:relsc} and \eqref{eq:sc_vec_def} for the definition), we obtain the rate (see Theorem~\ref{theorem_rcd_eso}) \[\left(1-p_0\min_{i=1,2,\dots,n} \tfrac{v^{(i)}}{w^{(i)}}\right)^t,\] where $p_0 = \tau/n$ is the probability that we sample any particular coordinate at each iteration, $\tau$ is the number of coordinates sampled in each iteration, $v^{(i)}$ are ESO parameters (we always have $v^{(i)} \leq L$), and $w^{(i)}$ are relative strong convexity parameters.  This rate is the same as the one in~\cite{NSync} which applies to standard randomized coordinate descent, i.e.,  without relative smoothness. On the other hand, if we choose $\tau=n$, we recover relative gradient descent, and the above rate recovers the rate obtained in~\cite{bauschke2016NoLips, Rel_smoothness_nesterov}.

As we show through numerical experiments, relRCD can be much faster than relGD. 


\paragraph{Relative Stochastic Gradient Descent (relSGD).} 
This is a special case of the generic method \eqref{next^{(i)}terate} if we choose $g_t$ to be an unbiased estimator of $\nabla f(x_t)$, $L_t\geq L$, and $Q_t=Q$.  This method extends the applicability of  stochastic gradient descent to the  relatively smooth setting. Convergence of the algorithm is obtained by using a specific decreasing stepsize rule (see Lemmas~\ref{L: sgd good choice of stepsizes} and \ref{l:sgd_dec_no_mu}). With suitable choice of stepsizes, we obtain $O(1/t)$ convergence rate under  relative strong convexity,  and $O(1/\sqrt{t})$ under relative smoothness alone. The rates we obtain generalized the rates known for standard stochastic gradient descent \cite{shai_book}.


%
%
%
%
%
%

\subsection{Related work on relative smooth optimization} Relative smoothness was first introduced in \cite{birnbaum2011distributed} and later rediscovered in \cite{bauschke2016NoLips} and \cite{Rel_smoothness_nesterov} following other works \cite{benning2016Relative_smoothness, bach_rel_smooth2017}. In \cite{birnbaum2011distributed}, Fisher market equilibrium problem was tacked and it was shown that a known algorithm to solve it, proportional response dynamics, is a special instance of relative gradient descent under relative smoothness assumption \cite{zhang_prd}.
In \cite{bauschke2016NoLips} the focus is on minimizing a composite objective, $f(x)+g(x)$, where $f$ is relatively smooth and convex, and
$g$ is convex but not necessarily differentiable. The first proximal algorithm in the relatively smooth setting is proposed there. In \cite{Rel_smoothness_nesterov}, the authors  introduce the notion of relative strong convexity, and propose a dual averaging scheme. In \cite{benning2016Relative_smoothness}, the  authors show that their algorithm converges to a stationary point for nonconvex $f$; no rates are given. Finally, in~\cite{bach_rel_smooth2017}, the authors extend the ideas of dual averaging to stochastic dual averaging. However, this is  only done for quadratic $f$. 

We should also menttion that there is a recent extension of minimizing relative continuous functions \cite{rel_continuity} where Lipschitzness assumption was generalized analogously as smoothness is extended by relative smoothness, opening up a new area of algorithms and applications.

\subsection{Mirror Descent}

Notice that the update rule \eqref{eq:iug89fg98he} of relative gradient descent coincides with mirror descent update rule \cite{nemirovsky1983problem, beck2003mirror}. Therefore, from practical perspective, relative gradient descent enjoys all advantages of mirror descent. 

Let us now briefly review a recent mirror descent literature. We identify two main streams of work on mirror descent. 

One focuses on accelerating deterministic Mirror descent using Nesterov's idea \cite{nesterov1983method}. A significant contribution in this are was done in \cite{tseng2008accelerated}, where previous methods were unified, and couple of enw ones were discovered. A novel approach using the insights from ODE's can be found in \cite{krichene2015accelerated}. In both cases, sublinear $\cO(k^{-2})$ rates were obtained and $f$ was assumed to be smooth convex respectively. There is also a recent work on acceleration using coupling mirror and gradient descent \cite{allen2014linear}, resulting in $\cO(k^{-2})$ rate as well. However, to best of our knowledge, no linear rates for mirror descent are known, except of ones in the relative smooth setting. 
 
The second stream sucuses on stochastic mirror descent with access to noised gradient oracle. In~\cite{nemirovski2009robust, ghadimi2012optimal,nedic2014stochastic} stochastic subgradient mirror descent was considered with $\cO(k^{-1})$ rate for strongly convex and $\cO(k^{\frac12})$ for nonstrongly convex functions. The convergence was obtained using decreasing stepsize in this case and considering bounded variance. 
An accelerated stochastic mirror descent dedicated for ERM problems was proposed in \cite{hien2016accelerated}, obtaining $O(k^{-2})$ convergence rate for smooth convex but non strongly convex functions. 
There is also a very limited literature on coordinate mirror descent strategies. In \cite{afkanpour2013randomized}, coordinate mirror descent was designed for multiple kernel learning problems. The method was casted as a special instance of stochastic mirror descent, obtaining $\cO(k^{-1})$ convergence rate.  Later in
\cite{dang2015stochastic},  stochastic block mirror descent -- where the randomness appears from both coordinate choice and noised gradient was considered, obtaining $\cO(1/k)$ rate for strongly convex and $\cO(k^{\frac12})$ for nonstrongly convex functions. Again, variance of the stochastic gradients was assumed to be bounded here. 

To compare with our results, we stress that relative smoothness setting allows mirror map to be non-strongly convex, in contrast to virtually whole mirror descent literature. On top of that, it allows to obtain linear rates due to the (relative) strong convexity. In general, relative smooth setting allows mirror descent to be directly compared to standard gradient descent. 
In particular, to best of our knowledge, we develop the first stochastic mirror descent algorithm -- relRCD -- with linear convergence rate which outperforms relGD. The setup for relRCD is similar to randomized coordinate descent setup \cite{NSync}, but different to the coordinate mirror descent strategies mentioned above, as we do not consider or enforce stochastic gradient estimates, rather we take gradient descent step in batch of coordinates with stepsize determined from smoothness\footnote{In fact, stepsize is determined from ESO assumption as in \cite{NSync}, which we explain in  Section~\ref{S:eso}}. 
Our other contribution -- relSGD -- is also an extension of stochastic gradient descent in standard smooth setting. We obtain very similar rates comparing to standard mirror descent literature, however the setting we consider is different -- we consider (relatively) smooth problems in contrast to \cite{nemirovski2009robust, ghadimi2012optimal,nedic2014stochastic}, where nonsmooth problems are tackled.

\section{Relatively Smooth Functions and Relative Gradient Descent}
\label{S:rss}

In this section, we introduce relative strongly convex property and give equivalent conditions on both relative smoothness and relative strong convexity. We also mention here a minimization algorithm under the relative smooth assumption - Relative Gradient Descent.

\subsection{Relative smoothness and relative strong convexity}

We firstly start by defining relative strong convexity, which is together with relative smoothness a key assumption for determining a convergence rate of algorithms mentioned in this work. Recall that we defined relative smoothness previously in \eqref{eq:relsmooth}.

\begin{definition} (Relative strong convexity \cite{Rel_smoothness_nesterov})
Function $f$ is $\mu$--strongly convex relative to $h$ on $Q$ if for any $x,\,y\in Q$ the following inequality holds  

\begin{equation}\label{eq:relsc}
f(y) \oo{\geq} f(x) +\langle \nabla f(x),y-x \rangle+\mu D_h(y,x).
\end{equation}
\end{definition}

As the main goal of this work is to minimize function $f$, we have freedom of choice of reference function $h$ - and one would like to choose it so that the convergence rate we obtain is the best possible. In particular, as mentioned in the introduction, for a specific choice $h(x)=\frac12\|x \|^2$ we have $D_h(x,y)=\frac12\|x-y \|^2$ and relative strong convexity assumption becomes standard strong convexity.


The following results  list some elementary properties of relative smooth functions. 

\begin{proposition}[\cite{bauschke2016NoLips,Rel_smoothness_nesterov}]
The following statements are equivalent:
\begin{itemize}
\item $f$ is $L$--smooth relative to $h$ on $Q$ 
\item $Lh(x)-f(x)$ is a convex function on $Q$
\item Under twice differentiability $L\nabla^2h(x) \succcurlyeq \nabla^2f(x) $ for all $x\in Q$
\item $\langle \nabla f(x)-\nabla f(y),x-y \rangle \leq L\langle \nabla h(x)-\nabla h(y),x-y \rangle$ for all $x\in Q$
\end{itemize}
\end{proposition}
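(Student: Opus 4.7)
The plan is to reduce all four conditions to a single unifying statement by introducing the auxiliary function $g(x) \eqdef Lh(x) - f(x)$ and recognizing that relative smoothness \eqref{eq:relsmooth} is just a disguised convexity statement about $g$. Once this observation is made, the equivalences reduce to well-known first-order, second-order, and monotonicity characterizations of convexity applied to $g$. I expect no real obstacles; the only care needed is to correctly track signs.

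First I would show (1) $\iff$ (2). Writing out (1) and using the definition \eqref{eq:bregman_def} of $D_h$, the relative smoothness inequality is
\[
f(x) - f(y) - \langle \nabla f(y), x-y\rangle \;\leq\; L\bigl(h(x) - h(y) - \langle \nabla h(y), x-y\rangle\bigr).
\]
Rearranging, this is equivalent to
\[
Lh(x) - f(x) \;\geq\; Lh(y) - f(y) + \langle L\nabla h(y) - \nabla f(y),\, x-y\rangle \quad \forall x,y\in Q,
\]
which, since $\nabla g(y) = L\nabla h(y) - \nabla f(y)$, is precisely the first-order characterization of convexity of $g$ on $Q$.

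Next, (2) $\iff$ (3) is the standard second-order characterization of convexity: a twice differentiable function on an open set containing the convex set $Q$ is convex on $Q$ if and only if its Hessian is positive semidefinite throughout $Q$. Applied to $g$, this yields $\nabla^2 g(x) = L\nabla^2 h(x) - \nabla^2 f(x) \succcurlyeq 0$ for all $x\in Q$, which is exactly (3). Finally, (2) $\iff$ (4) follows from the monotonicity characterization: a differentiable function $g$ is convex on $Q$ if and only if $\langle \nabla g(x) - \nabla g(y), x-y\rangle \geq 0$ for all $x,y\in Q$. Substituting $\nabla g = L\nabla h - \nabla f$ and rearranging gives the inequality in (4). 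Closing the loop (1) $\Rightarrow$ (2) $\Rightarrow$ (3) $\Rightarrow$ (2) $\Rightarrow$ (4) $\Rightarrow$ (2) $\Rightarrow$ (1) then establishes all four equivalences.
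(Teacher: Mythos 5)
Your proof is correct, and the key observation --- that every item is a restatement, for the auxiliary function $g \eqdef Lh - f$, of one of the standard first-order, second-order, or monotone-gradient characterizations of convexity --- is exactly the right way to see all four equivalences at once. Note, however, that the paper does not actually supply a proof of this proposition; it simply attributes the result to \cite{bauschke2016NoLips,Rel_smoothness_nesterov} and refers the reader there. Your argument matches the one given in those references, so there is nothing further to reconcile; the only small caveat worth adding is that the second-order equivalence (2) $\iff$ (3) implicitly uses that $f$ and $h$ are twice differentiable on an open set containing $Q$ and that $Q$ is convex (both of which hold in the paper's setting), since otherwise positive semidefiniteness of the Hessian on $Q$ alone need not imply convexity on $Q$.
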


For completeness, we also list of equivalent conditions to relative strong convexity.

\begin{proposition}[\cite{bauschke2016NoLips,Rel_smoothness_nesterov}]
The following statements are equivalent:
\begin{itemize}
\item $f$ is $\mu$--strongly convex relative  to $h$ on $Q$ 
\item $f(x)-\mu h(x)$ is a convex function on $Q$
\item Under twice differentiability $\nabla^2f(x) \succcurlyeq  \mu\nabla^2h(x)$ for all $x\in Q$
\item $\langle \nabla f(x)-\nabla f(y),x-y \rangle \geq \mu\langle \nabla h(x)-\nabla h(y),x-y \rangle$ for all $x\in Q$
\end{itemize}
\end{proposition}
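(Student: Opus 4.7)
The plan is to observe that the definition of $\mu$-strong convexity of $f$ relative to $h$ is exactly the statement that the function $g := f - \mu h$ is convex in the ordinary (unqualified) sense, and then to read off (3) and (4) by invoking the standard equivalent characterizations of convexity for $g$. This is the same template used for the preceding proposition on relative smoothness, with the roles of $L$ and $\mu$ interchanged and the inequalities reversed.

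Concretely, for (1) $\Leftrightarrow$ (2), I would start from the definition \eqref{eq:relsc}, expand $D_h(y,x) = h(y) - h(x) - \langle \nabla h(x), y-x\rangle$, and regroup terms to obtain the equivalent inequality
\begin{equation*}
f(y) - \mu h(y) \;\geq\; f(x) - \mu h(x) + \langle \nabla f(x) - \mu \nabla h(x),\, y-x\rangle, \quad \forall x,y\in Q.
\end{equation*}
Since $f$ and $h$ are differentiable on an open set containing $Q$ and $\nabla g = \nabla f - \mu \nabla h$, this is precisely the first-order characterization of convexity of $g$ on $Q$.

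For (2) $\Leftrightarrow$ (3), I would invoke the textbook fact that a $C^2$ function on a convex set is convex if and only if its Hessian is positive semidefinite pointwise; applied to $g$ this is $\nabla^2 f(x) - \mu \nabla^2 h(x) \succcurlyeq 0$, i.e.\ $\nabla^2 f(x) \succcurlyeq \mu \nabla^2 h(x)$. For (2) $\Leftrightarrow$ (4), I would use the equally standard equivalence between convexity of a differentiable function and monotonicity of its gradient: $g$ is convex on $Q$ iff $\langle \nabla g(x) - \nabla g(y), x-y\rangle \geq 0$ for all $x,y\in Q$; expanding $\nabla g = \nabla f - \mu \nabla h$ yields exactly the inequality in item (4).

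There is no real obstacle here — the only piece of work is the algebraic rearrangement in the first step, after which the remaining equivalences are direct applications of well-known facts about convex functions. The proof essentially reduces the problem to the classical characterization of convexity applied to the auxiliary function $f - \mu h$.
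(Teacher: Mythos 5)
Your argument is correct and is the standard proof: the paper itself does not reproduce a proof of this proposition (it defers to \cite{bauschke2016NoLips,Rel_smoothness_nesterov}), and the argument given there is exactly your reduction to convexity of $g = f - \mu h$, with (3) and (4) then following from the usual second-order and gradient-monotonicity characterizations of convexity. Nothing is missing; the algebraic rearrangement in the first step and the three textbook equivalences you invoke are all that is needed.
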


The second (convexity) and third (Hessians) conditions appearing in the two propositions above are typically easier  to verify in practice.  For proofs of the propositions, more properties of relatively smooth functions and some examples, we refer the reader to \cite{bauschke2016NoLips,Rel_smoothness_nesterov}. 

\subsection{Relative gradient descent}

Now we are ready to write relative gradient descent (relGD) - baseline algorithm for minimizing relatively smooth functions.

\begin{algorithm}[H]
\textbf{Input: }{Initial iterate $x_0$; reference function $h$ and constant $L>0$ such that $f$ is $L$--smooth relative to $h$.}\\
\For {$t= 0,1,\dots, k-1$} {
	 \begin{enumerate}
\item Set $x_{t+1}\leftarrow \argmin_{x\in Q}\left\{\langle \nabla f(x_t),x\rangle+LD_h(x,x_t)\right\}$ 
\end{enumerate}
 }
\textbf{return} $x_k$
\caption{relGD (Relative Gradient Descent) \cite{birnbaum2011distributed, bauschke2016NoLips, Rel_smoothness_nesterov}}
\end{algorithm}

As mentioned in the introduction, if $h(x)=\tfrac12\|x\|^2$ and $Q=\R^n$, we have \[
x_{t+1} \oo{=} \argmin_{x\in Q} \left\{\langle\nabla f(x_t),x\rangle+\frac{L}{2}\|x-x_t\|^2 \right\} \oo{=} x_t-\frac1L \nabla f(x_t),
\]
and relGD coincides with standard gradient descent with stepsize $\tfrac1L$.

Note also that Algorithm~\algD\ is identical to Mirror descent \cite{beck2003mirror}. The difference that we do not assume standard smoothness but relative smoothness with reference function $h$, thus the analysis and convergence results are significantly different.


The analysis of the algorithm is similar to the analysis of gradient descent under the smoothness assumption. The main difference is that one can explicitly compute the decrease in objective which is guaranteed from the standard smoothness property. This is not the case for the relative smooth optimization as we do not have a general closed expression for the next iterate. In order to overcome this issue, we are using so called three point property \cite{Three_point_porperty_Lan}. This is not a novel approach, it was used in \cite{bauschke2016NoLips, Rel_smoothness_nesterov}. As we need to bound the guaranteed decrease in objective, the analysis becomes slightly looser, which is a price for the generality. However, as we show later, one can still obtain the same convergence result on the ``$O$'' notation comparing to the standard smooth setting.

\begin{lemma}[Three point property]
\label{L:tpp}
Let $\phi, h$ be differentiable convex functions both defined on some convex set $Q$. Let $D_h(\cdot,\cdot)$ be a Bregman distance. For a given $z \in Q$ denote
$$z_+ \oo{\eqdef} \argmin_{x\in Q} \phi(x)+D_{h}(x,z).$$
Then
\begin{equation}
\phi(x)+D_{h}(x,z) \oo{\geq} \phi(z_+)+D_{h}(z_+,z)+D_{h}(x,z_+),\quad \forall x\in Q.
\label{eq:tpp}
\end{equation}
\end{lemma}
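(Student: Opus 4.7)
The proof has two ingredients. First, since $z_+$ minimizes $\phi(x) + D_h(x,z)$ over the convex set $Q$, the first-order optimality condition gives, for every $x\in Q$,
\[
\langle \nabla \phi(z_+) + \nabla h(z_+) - \nabla h(z),\; x - z_+\rangle \geq 0,
\]
where I used $\nabla_x D_h(x,z) = \nabla h(x) - \nabla h(z)$. Second, convexity of $\phi$ gives
\[
\phi(x) - \phi(z_+) \geq \langle \nabla \phi(z_+),\; x - z_+\rangle.
\]

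The plan is to expand the three Bregman distances in the desired inequality directly from the definition \eqref{eq:bregman_def} and notice that most $h$-terms cancel. Concretely, a routine computation yields the identity
\[
D_h(x,z) - D_h(z_+,z) - D_h(x,z_+) \;=\; \langle \nabla h(z_+) - \nabla h(z),\; x - z_+\rangle,
\]
so that the target inequality \eqref{eq:tpp} is equivalent to
\[
\phi(x) - \phi(z_+) + \langle \nabla h(z_+) - \nabla h(z),\; x - z_+\rangle \;\geq\; 0.
\]
Combining the convexity bound on $\phi(x) - \phi(z_+)$ with the optimality condition above produces exactly this inequality, finishing the proof.

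There is no real obstacle here: the only thing one must be careful about is that the optimality condition for the nonsmooth-free problem $\min_{x\in Q}\phi(x)+D_h(x,z)$ is the variational inequality displayed above, which requires $h$ to be differentiable on $Q$ (assumed in the statement, since $D_h$ is defined) and $\phi$ to be differentiable (also assumed). If $z_+$ lay on the boundary of $Q$, we would still have the inequality form rather than a vanishing gradient, but that suffices because we only use it against $x - z_+$ for $x\in Q$.
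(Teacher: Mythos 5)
Your proof is correct and follows essentially the same route as the paper's: the first-order optimality (variational inequality) at $z_+$, the three-term Bregman cancellation identity $D_h(x,z)-D_h(z_+,z)-D_h(x,z_+)=\langle \nabla h(z_+)-\nabla h(z),\,x-z_+\rangle$, and the gradient inequality from convexity of $\phi$, combined exactly as you describe. Nothing to add.
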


Proof of the three point property can be found in the appendix. The following theorem states a convergence result of relative gradient descent. 

\begin{theorem}[Lu, Freund and Nesterov~\cite{Rel_smoothness_nesterov}]\label{t:primal_scheme}
Consider  Algorithm~\algD. If $f$ is $L$--smooth and $\mu$--strongly convex
relative to $h$ for some $L > 0$ and $\mu \geq 0$, then for all $k\geq 1$ the following inequality
holds:
\[
f(x_k)-f(x_*)
\oo{\leq}
 \frac{\mu D_h(x_*,x_0)}{\left(1+\frac{\mu}{L-\mu}\right)^k-1}
 \oo{\leq }
\frac{L-\mu}{k}D_h(x,x_0).
\]
 In the case when $\mu = 0$, the middle expression is defined in the limit as $\mu \rightarrow 0_+$.
\end{theorem}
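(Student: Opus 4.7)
The plan is to combine the three point property (Lemma~\ref{L:tpp}) with the relative smoothness and relative strong convexity inequalities, derive a one-step contraction relating $D_h(x_*,x_t)$ and $f(x_t)-f(x_*)$, and then telescope with a geometric weighting.

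First, I would apply Lemma~\ref{L:tpp} with $z = x_t$, $z_+ = x_{t+1}$, and $\phi(x) = \tfrac{1}{L}\langle \nabla f(x_t),x\rangle$ so that the minimizer in Algorithm~1 matches $z_+$. Plugging in $x = x_*$ and multiplying by $L$ yields
\[
\langle \nabla f(x_t), x_{t+1}-x_*\rangle + L\,D_h(x_{t+1},x_t) \;\leq\; L\,D_h(x_*,x_t) - L\,D_h(x_*,x_{t+1}).
\]
Next, I would invoke relative smoothness \eqref{eq:relsmooth} at the pair $(x_{t+1},x_t)$ to upper bound $f(x_{t+1}) - f(x_t)$ by $\langle\nabla f(x_t), x_{t+1}-x_t\rangle + L D_h(x_{t+1},x_t)$, split $x_{t+1}-x_t = (x_{t+1}-x_*) + (x_*-x_t)$, and use the previous display on the first term. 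Finally, I would apply relative strong convexity \eqref{eq:relsc} to replace $\langle \nabla f(x_t), x_* - x_t\rangle$ by $f(x_*) - f(x_t) - \mu D_h(x_*,x_t)$. Writing $\delta_t := f(x_t)-f(x_*)$ and $\Delta_t := D_h(x_*,x_t)$, this bookkeeping collapses to the key one-step inequality
\begin{equation}\label{eq:key_recursion}
L\,\Delta_{t+1} + \delta_{t+1} \;\leq\; (L-\mu)\,\Delta_t.
\end{equation}

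To convert \eqref{eq:key_recursion} into the stated rate, I would first record that $\delta_t$ is monotonically nonincreasing: since $x_{t+1}$ minimizes $\langle \nabla f(x_t), \cdot\,\rangle + L D_h(\cdot,x_t)$ over $Q$ and $x_t\in Q$ gives value $0$, relative smoothness implies $f(x_{t+1}) \leq f(x_t)$. Next, set $q := (L-\mu)/L$ and rewrite \eqref{eq:key_recursion} as $q^{-(t+1)}\Delta_{t+1} - q^{-t}\Delta_t \leq -\delta_{t+1}/(L q^{t+1})$. Telescoping from $t=0$ to $k-1$ and using $\Delta_k \geq 0$ together with monotonicity of $\delta_t$ gives
\[
\Delta_0 \;\geq\; \frac{\delta_k}{L}\sum_{t=0}^{k-1} q^{-(t+1)} \;=\; \frac{\delta_k}{L}\cdot\frac{q^{-k}-1}{1-q} \;=\; \frac{\delta_k}{\mu}\left[\left(1+\tfrac{\mu}{L-\mu}\right)^k - 1\right],
\]
which is exactly the first (middle) bound of the theorem. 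The second inequality then follows immediately from Bernoulli's inequality $(1+\mu/(L-\mu))^k \geq 1 + k\mu/(L-\mu)$. The case $\mu=0$ is obtained either by a direct telescoping of \eqref{eq:key_recursion} (which for $\mu=0$ reads $\delta_{t+1}\leq L(\Delta_t-\Delta_{t+1})$, giving $k\delta_k \leq \sum_t \delta_{t+1} \leq L\Delta_0$) or by passing to the limit.

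I expect the main obstacle to be the bookkeeping in step one that produces the clean recursion \eqref{eq:key_recursion}: the three point property, relative smoothness, and relative strong convexity must be combined with the right split of $x_{t+1}-x_t$, and the asymmetry of $D_h$ means one must be careful which Bregman distance appears on which side. Once \eqref{eq:key_recursion} is in hand, the telescoping step is essentially forced by choosing the multiplier $q^{-t}$ so that the $\Delta_t$ terms cancel, and the remaining conversions are routine.
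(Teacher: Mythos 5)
Your proof is correct. The paper itself does not supply a proof of this theorem --- it is stated as a known result attributed to Lu, Freund and Nesterov and merely cited --- so there is no paper proof to compare against line by line. That said, your argument is precisely the Lu--Freund--Nesterov primal-scheme argument, and it is also exactly the recipe the present paper uses for its own generalizations: your one-step recursion $L\Delta_{t+1}+\delta_{t+1}\leq(L-\mu)\Delta_t$ is the $\tau=n$, $x=x_*$ specialization of Lemma~\ref{iter_decrease_mb}, and your geometric weighting by $q^{-t}$ with $q=(L-\mu)/L$ followed by monotonicity of $\delta_t$ is precisely what Lemma~\ref{l:rate_from_iter} (with $\delta=1$, $\varphi=L$, $\psi=\mu$) packages. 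The bookkeeping you flag as the main hazard --- splitting $x_{t+1}-x_t$ and being careful about the asymmetry of $D_h$ --- is handled correctly: the three point property gives $D_h(x_*,x_t)$ and $D_h(x_*,x_{t+1})$ in the right orientation, relative smoothness contributes the $D_h(x_{t+1},x_t)$ term that cancels, and relative strong convexity supplies $-\mu D_h(x_*,x_t)$. The monotonicity argument for $\delta_t$ (evaluating the majorant at $x_t$ to get a nonpositive bound) and the Bernoulli step are both fine, and the $\mu=0$ case is handled cleanly by direct telescoping. One tiny stylistic point: in your telescoping display it would be worth noting explicitly that $q<1$ when $\mu>0$ so that $q^{-(t+1)}$ is indeed a valid positive multiplier, and that the $\mu=0$ limit is needed separately since $q=1$ there; you do address this, just implicitly.
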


In the case when $\mu>0$, Relative Gradient Descent enjoys linear convergence rate, which is asymptotically driven by
\[
\left(1+\frac{\mu}{L-\mu}\right)^{-k}
=
\left(\frac{L}{L-\mu}\right)^{-k}
=\left(1-\frac{\mu}{L}\right)^k
.\] On the other hand if $\mu=0$, Theorem~\ref{t:primal_scheme} yields $O(1/k)$ convergence rate. Thus, relative gradient descent is, up to the constant term, matching rate of standard Gradient descent under standard smoothness assumption.

\section{Relative Randomized Coordinate Descent with Short Stepsizes \label{S:rcd}}

In this section, we propose and analyze  a naive coordinate descent algorithm for minimizing relative smooth functions. The key idea is to choose a subset of coordinates each iteration and make a step from relGD in the corresponding subspace. 

We give two slightly different ways to analyze the convergence. However, neither of them provides a speedup comparing to Algorithm~\algD\,. We mention this for educational purposes, to illustrate our techniques. This issue will be adressed later in Section~\ref{S:eso}, providing us a potential speedup comparing to Algorithm~\algD. 

The key assumption of this section - separability is defined in the following way:
$ h(x)=\sum_{i=1}^n h^{(i)}\left(x^{(i)}\right),$
where $h^{(i)}$ takes only $i$-th coordinate of $x$. On top of that, we assume that $Q$ is block separable: $Q=\prod_{i=1}^nQ^{(i)}$ where $Q^{(i)}$ is closed interval for all $i$. In other words $x\in Q$ if and only if for all $i$ we have $x^{(i)}\in Q^{(i)}$.

Throughout this section, we assume that $f$ is $L$--smooth and $\mu$--strongly convex relative to some separable function $h$.

\subsection{Algorithm}

We introduce here Algorithm~\algR\, -- Relative Randomized Coordinate descent with short stepsizes. From now, let us denote $\ones^i$ to be $i$--th column of $n\times n$ identity matrix. The update is given by \eqref{next^{(i)}terate} with 
\[Q_t=\left\{x \;\Big|\; x=x_t+\sum_{i\in M_t}\mathrm{span}\left(\ones^i\right) \right\}. \] 

Subset of coordinates $M_t$ is chosen randomly such that $\Prob(i\in M_t)=\Prob(j\in M_t)$ for all $i,j\leq n$ and $|M_t|=\mb$. 

\begin{algorithm}[H]
\textbf{Input: }{Initial iterate $x_0$, separable reference function $h$ and $L$ such that $f$ is $L$--smooth relative to $h$.}\\
\For {$t= 0,1,\dots, k-1$} {
	 \begin{enumerate}
\item Choose $M_t\subseteq \{1,2,\dots, n\}$ such that $\Prob(i\in M_t)=\Prob(j\in M_t)$ for all $i,j\leq n$ and $|M_t|=\mb$
\item Set $Q_t\leftarrow \left\{x\;|\;x=x_t+\sum_{i\in M_t}\mathrm{span}\left(\ones^i\right) \right\}$ 
\item Set $x_{t+1}\leftarrow \argmin_{x\in Q_t} \left\{\langle\nabla f(x_t),x\rangle+LD_h(x,x_t) \right\}$ 
\end{enumerate}
 }
\textbf{return} $x_k$

\caption{ relRCDs (Relative Randomized Coordinate Descent with Short Stepsizes)}

\end{algorithm}

\subsection{Key lemma}

It will be useful to introduce \[x_{(t+1,*)}\oo{\eqdef}\argmin_{x\in Q}\{\langle \nabla f(x_t),x \rangle +LD_{h}(x,x_t)\}\] as we will use this notation in the analysis.

The following lemma describes behavior of Algorithm~\algR\ in each iteration, providing us on the expected upper bound on the value in the next iterate using the previous iterate. 

\begin{lemma}[Iteration decrease for Algorithm~\algR]
Suppose that $f$ is $L$--smooth and $\mu$--strongly convex relative to separable function $h$. Then, running Algorithm~\algR\ we obtain for all $x\in Q$:
\begin{equation*}
\Exp{f(x_{t+1})}\oo{\leq} \frac{n-\mb}{n}\Exp{ f(x_{t})}+ \frac{\mb}{n}  f(x)+\left(L-\frac{\mb}{n}\mu\right)\Exp{D_{h}(x,x_t)}-L\Exp{D_{h}(x,x_{t+1})}.
\end{equation*}
\label{iter_decrease_mb}

\end{lemma}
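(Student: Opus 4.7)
The plan is to combine four ingredients: relative smoothness of $f$, separability of $h$, the three-point property (Lemma~\ref{L:tpp}), and relative strong convexity. The main subtlety will be carefully switching between the \emph{partial} iterate $x_{t+1}$ (which updates only the coordinates in $M_t$) and the \emph{full} relGD iterate $x_{(t+1,*)}$; separability of $h$ is the bridge that lets us do this cleanly in expectation.

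First I would apply relative $L$-smoothness at the pair $(x_{t+1}, x_t)$ to get
\[
 f(x_{t+1}) \leq f(x_t) + \langle \nabla f(x_t), x_{t+1}-x_t\rangle + L D_h(x_{t+1},x_t).
\]
Since $h$ is separable and $Q$ is block-separable, the minimization in step (3) of Algorithm~\algR\ decouples across coordinates: for $i\in M_t$ we have $x_{t+1}^{(i)} = x_{(t+1,*)}^{(i)}$, and for $i\notin M_t$ we have $x_{t+1}^{(i)} = x_t^{(i)}$. Hence the right-hand side equals $f(x_t) + \sum_{i\in M_t}\bigl[\langle \nabla_{i} f(x_t), x_{(t+1,*)}^{(i)}-x_t^{(i)}\rangle + L D_{h^{(i)}}(x_{(t+1,*)}^{(i)},x_t^{(i)})\bigr]$. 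Taking conditional expectation over the uniform choice of $M_t$ (each $i$ lies in $M_t$ with probability $\tau/n$) gives
\[
 \Exp{f(x_{t+1}) \mid x_t} \leq \tfrac{n-\tau}{n} f(x_t) + \tfrac{\tau}{n}\Bigl[ f(x_t) + \langle\nabla f(x_t), x_{(t+1,*)}-x_t\rangle + L D_h(x_{(t+1,*)},x_t)\Bigr].
\]

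Next I would apply the three-point property (Lemma~\ref{L:tpp}) with $\phi(\cdot)=\tfrac{1}{L}\langle\nabla f(x_t),\cdot\rangle$, $z=x_t$, $z_+=x_{(t+1,*)}$, which yields, for any $x\in Q$,
\[
 \langle\nabla f(x_t), x_{(t+1,*)}-x_t\rangle + L D_h(x_{(t+1,*)},x_t) \leq \langle\nabla f(x_t), x-x_t\rangle + L D_h(x,x_t) - L D_h(x,x_{(t+1,*)}).
\]
Then relative $\mu$-strong convexity at $(x,x_t)$ bounds $\langle\nabla f(x_t),x-x_t\rangle \leq f(x)-f(x_t)-\mu D_h(x,x_t)$. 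Substituting in the conditional expectation produces
\[
 \Exp{f(x_{t+1}) \mid x_t} \leq \tfrac{n-\tau}{n} f(x_t) + \tfrac{\tau}{n}\Bigl[ f(x) + (L-\mu) D_h(x,x_t) - L D_h(x,x_{(t+1,*)})\Bigr].
\]

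The only remaining task is to replace $D_h(x,x_{(t+1,*)})$ by $\Exp{D_h(x,x_{t+1})\mid x_t}$. This is exactly where separability is needed a second time: using $x_{t+1}^{(i)}=x_{(t+1,*)}^{(i)}$ for $i\in M_t$ and $x_{t+1}^{(i)}=x_t^{(i)}$ otherwise, we get
\[
 \Exp{D_h(x,x_{t+1}) \mid x_t} = \tfrac{\tau}{n} D_h(x,x_{(t+1,*)}) + \tfrac{n-\tau}{n} D_h(x,x_t),
\]
so $\tfrac{\tau}{n} L D_h(x,x_{(t+1,*)}) = L\Exp{D_h(x,x_{t+1})\mid x_t} - \tfrac{n-\tau}{n} L D_h(x,x_t)$. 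Plugging this in and collecting the coefficients of $D_h(x,x_t)$, the coefficient becomes $\tfrac{\tau}{n}(L-\mu) + \tfrac{n-\tau}{n}L = L - \tfrac{\tau}{n}\mu$. Taking total expectation over the history gives exactly the claimed inequality.

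The main obstacle is the bookkeeping in this final substitution: one must resist the temptation to apply the three-point property directly at $x_{t+1}$ (which lives in the constrained subspace $Q_t$, not $Q$), and instead route the argument through $x_{(t+1,*)}$, using separability twice --- once to linearize the smoothness bound into a per-coordinate sum, and once to convert the Bregman distance to $x_{(t+1,*)}$ into the expected Bregman distance to $x_{t+1}$. Aside from this, all steps are routine applications of the three stated structural properties of $f$ and $h$.
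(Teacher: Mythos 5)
Your proposal is correct and follows essentially the same route as the paper's proof: relative smoothness applied at $(x_{t+1},x_t)$, coordinate decoupling via separability of $h$ and $Q$ to pass to $x_{(t+1,*)}$ in expectation, the three-point property at $x_{(t+1,*)}$, relative strong convexity, and the identity $\Exp{D_h(x,x_{t+1})\mid x_t}=\tfrac{\tau}{n}D_h(x,x_{(t+1,*)})+\tfrac{n-\tau}{n}D_h(x,x_t)$ to close the argument.
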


\begin{proof}

\begin{eqnarray}
\Exp{f(x_{t+1})|x_t} 
& \stackrel{\eqref{eq:relsmooth}}{\leq} & 
f(x_{t})+ \Exp{\Big ( \langle \nabla f(x_t), x_{t+1}-x_t \rangle +LD_{h}(x_{t+1},x_t)\Big )\;|\;x_t}
\nonumber
\\
&=&
\nonumber
f(x_{t})+ \Exp{\sum_{i\not\in M_t}\Big (\left(\nabla f(x_t)\right)^{(i)}( x_{t+1}-x_t)^{(i)} +LD_{h^{(i)}}\left(x_{t+1}^{(i)},x_t^{(i)}\right)\Big )\;|\;x_t} 
\\
&& 
\qquad +
 \Exp{\sum_{i\in M_t}\Big (\left(\nabla f(x_t)\right)^{(i)}(x_{t+1}-x_t)^{(i)} +LD_{h^{(i)}}\left(x_{t+1}^{(i)},x_t^{(i)}\right)\Big )\;|\;x_t}
 \nonumber
\\
& \stackrel{(*)}{=} & 
\nonumber
f(x_{t}) + \Exp{\sum_{i\in M_t}\Big (\left(\nabla f(x_t)\right)^{(i)}(x_{t+1}-x_t)^{(i)} +LD_{h^{(i)}}\left(x_{t+1}^{(i)},x_t^{(i)}\right)\Big )\;|\;x_t}
\\
& = & 
\nonumber
f(x_{t}) +  \frac{\mb}{n}\left \langle\nabla f(x_t), x_{(t+1,*)}-x_t\right \rangle +\frac{\mb}{n}LD_{h}\left(x_{(t+1,*)},x_t\right)
\\
& \stackrel{\eqref{eq:tpp}}{\leq} & 
\nonumber
f(x_{t}) +  \frac{\mb}{n} \langle \nabla f(x_t) ,x-x_t \rangle +\frac{\mb}{n} LD_{h}(x,x_t) -\frac{\mb}{n} L D_h(x,x_{(t+1,*)})
\\
& \stackrel{\eqref{eq:relsc}}{\leq} &  
\frac{n-\mb}{n} f(x_{t})+ \frac{\mb}{n} f(x) +   \frac{\mb}{n} (L-\mu)D_{h}(x,x_t)-\frac{\mb}{n} L D_h(x,x_{(t+1,*)}).
\label{eq:rcd^{(i)}ter_mb_1}
\end{eqnarray}

The equality $(*)$ above holds due to the fact that $x_{t+1}^{(i)}=x_{t}^{(i)}$ for $i\not \in M_t$. 
Note that  
\[
\Exp{D_{h}(x,x_{t+1})\;|\;x_t}
\oo{=}
\frac{n-\mb}{n}D_h(x,x_t)+  \frac{\mb}{n} D_h(x,x_{(t+1,*)})
.\] 

Plugging it into \eqref{eq:rcd^{(i)}ter_mb_1}, we get

\begin{eqnarray*}
\Exp{f(x_{t+1})|x_t}
&\stackrel{ \eqref{eq:rcd^{(i)}ter_mb_1}}{\leq}& 
\frac{n-\mb}{n} f(x_{t})+ \frac{\mb}{n}  f(x)+\frac{\mb}{n}(L-\mu)D_{h}(x,x_t)-L\Exp{D_{h}(x,x_{t+1})|x_t}
\\
&& \qquad
+\frac{n-\mb}{n}LD_h(x,x_t))
\\
&=&
\frac{n-\mb}{n} f(x_{t})+ \frac{\mb}{n}  f(x)+\left(L-\frac{\mb}{n}\mu\right)D_{h}(x,x_t)-L\Exp{D_{h}(x,x_{t+1})|x_t}.
\end{eqnarray*}

Taking the expectation over the algorithm and using the tower property we obtain the desired result. 
\end{proof}

The lemma above provides us with the expected decrease in the objective every iteration. It holds for all $x\in Q$, particularly for $x=x_t$ we obtain that the sequence $\{f(x_t)\}$ is nonincreasing in expectation.

\subsection{Strongly convex case: $\mu>0$}

The following theorem uses recursively Lemma~\ref{iter_decrease_mb} with $x=x_*$, obtaining a convergence rate of Algorithm~\algR\,.

\begin{theorem}[Convergence rate for Algorithm~\algR ]
Suppose that $f$ is $L$--smooth and $\mu$--strongly convex relative to separable function $h$ for $\mu>0$. Running Algorithm~\algR\ for $k$ iterations we obtain:
\begin{equation*}
\sum_{t=1}^{k}c_t \big(\Exp{f(x_t)}-f(x_*)\big)
\leq
    \frac{(L-\frac{\mb}{n}\mu)D_{h}(x_*,x_0) +\frac{n-\mb}{n}(f(x_0)-f(x_*))}{1-\frac{L}{\mu}+\frac{L}{\mu}\Big ( \frac{L}{L-\frac{\mb}{n}\mu} \Big )^{k-1}}, 
\end{equation*}
where $c= (c_1,\dots,c_k)\in \R^k_+$ is a positive vector with entries summing up to 1.
\label{theorem_rcd_mb}
\end{theorem}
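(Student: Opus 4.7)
The plan is to unroll the one-step inequality from Lemma~\ref{iter_decrease_mb} by weighting it with a geometric sequence chosen so that the Bregman-distance terms telescope, and then to identify the surviving coefficients on the $f$-gap terms as the $c_t$. First I would apply Lemma~\ref{iter_decrease_mb} with $x = x_*$ and introduce the shorthands $a_t \eqdef \Exp{f(x_t)} - f(x_*) \geq 0$, $d_t \eqdef \Exp{D_h(x_*,x_t)} \geq 0$, and
$$\alpha \eqdef L - \tfrac{\mb}{n}\mu, \qquad \rho \eqdef \tfrac{n-\mb}{n}, \qquad r \eqdef L/\alpha \geq 1.$$
With this notation, the lemma reads $a_{t+1} - \rho a_t + L d_{t+1} \leq \alpha d_t$.

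Next I would multiply this recursion by $r^t$ and sum over $t = 0,1,\dots,k-1$. The relation $r^t \alpha = r^{t-1}L$ is exactly what causes the Bregman terms on the two sides to telescope: on the right only $r^0 \alpha d_0 = \alpha d_0$ remains, and on the left only $r^{k-1} L d_k$ survives. Dropping $r^{k-1} L d_k \geq 0$ and reindexing the shifted sum of the $a_{t+1}$-terms yields
$$r^{k-1} a_k + \sum_{t=1}^{k-1} r^{t-1}(1 - \rho r)\,a_t \;\leq\; \rho a_0 + \alpha d_0.$$
Since $L \geq \mu$ one has $\rho r \leq 1$, so every coefficient on the left is nonnegative; this is what makes it legitimate to normalize into a convex combination.

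Finally, let $S$ denote the total of the coefficients on the left and define $c_t$ to be each such coefficient divided by $S$, so $\sum c_t = 1$. The right-hand side is already in the advertised numerator form $\tfrac{n-\mb}{n}(f(x_0)-f(x_*)) + (L - \tfrac{\mb}{n}\mu)D_h(x_*,x_0)$, so it only remains to evaluate $S$. Using $\sum_{t=1}^{k-1} r^{t-1} = (r^{k-1} - 1)/(r-1)$ together with the identity
$$\frac{1 - \rho r}{r - 1} \;=\; \frac{L - \mu}{\mu},$$
a short computation gives $S = r^{k-1} + \tfrac{L-\mu}{\mu}(r^{k-1} - 1) = 1 - \tfrac{L}{\mu} + \tfrac{L}{\mu}\,r^{k-1}$, which is exactly the denominator stated in the theorem. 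Dividing through by $S$ produces the claim.

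The step I expect to carry the most risk is the last one: I need to verify that $1 - \rho r$ and $r - 1$ share a common factor whose cancellation delivers the clean ratio $(L-\mu)/\mu$, since all the compactness of the final bound comes from this cancellation. Once that algebraic identity is in hand, the rest of the argument is routine telescoping together with the nonnegativity used to discard $r^{k-1} L d_k$.
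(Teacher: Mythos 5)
Your proposal is correct and follows essentially the same route as the paper: the paper's proof of Theorem~\ref{theorem_rcd_mb} simply invokes Lemma~\ref{l:rate_from_iter} (with $\delta=\tfrac{\mb}{n}$, $\varphi=L$, $\psi=\mu$), and the proof of that lemma is precisely the geometric weighting by $\big(\tfrac{\varphi}{\varphi-\delta\psi}\big)^t = r^t$, the telescoping of the Bregman terms, and the collection of the surviving $f$-gap coefficients into the normalized weights $c_t$ that you carry out by hand. Your cancellation $\tfrac{1-\rho r}{r-1}=\tfrac{L-\mu}{\mu}$ matches the paper's $\tfrac{\varphi-\psi}{\delta^{-1}\varphi-\psi}$ factor, so the argument is an inlined, correct instance of the paper's general lemma.
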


\begin{proof}
The proof follows by applying Lemma \ref{l:rate_from_iter} on Lemma \ref{iter_decrease_mb} with $x=x_*$ for 
$f_t=\Exp{f(x_t)},\,D_t=\Exp{D_h(x_*,x_t)},\,f_*= f(x_*),\, \delta=\tfrac{\mb}{n},\,\ \varphi=L,\, \psi=\mu$.

\end{proof}

Note that the term driving the convergence rate in Theorem~\ref{theorem_rcd_mb} is  
$
\left( L/(L-\tfrac{\mb}{n}\mu)\right)^{1-k}
=
\left(1-\tfrac{\mb}{n}\tfrac{\mu}{L}\right)^{k-1},
$
where $k$ is the number if iterations. In the special case when $\mb=n$, using simple algebra one can verify that Theorem~\ref{theorem_rcd_mb} matches the results from Theorem~\ref{t:primal_scheme}. 

\subsection{Non-strongly convex case: $\mu=0$}

The following theorem provides us with the convergence rate of Algorithm~\algR\, when $f$ is convex but not necessarily relative strongly convex (i.e., $\mu=0$).

\begin{theorem}[Convergence rate for Algorithm~\algR ]
Suppose that $f$ is convex and $L$--smooth relative to separable function $h$. Running Algorithm~\algR\ for $k$ iterations we obtain:
\begin{equation*}
\sum_{t=1}^{k}c_t(\Exp{f(x_t)}-f(x_*)) 
\oo{\leq} 
\frac{LD_{h}(x,x_0)+\frac{n-\mb}{n}\left(f(x_0)-f(x_*)\right)}{1+\frac{\mb(k-1)}{n}},
\end{equation*}
where $c=(c_1,\dots,c_k)\in \R^k$ is a positive vector proportional to 
$
\big(\frac{\mb}{n},\,\frac{\mb}{n},\,\dots,\,\frac{\mb}{n},\,1 \big)
$.
\label{theorem_rcd_nsc_mb}
\end{theorem}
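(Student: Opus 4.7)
The plan is to mimic the proof of the strongly convex counterpart (Theorem~\ref{theorem_rcd_mb}) by invoking Lemma~\ref{l:rate_from_iter} applied to Lemma~\ref{iter_decrease_mb}, now specialized to $\psi=\mu=0$ (with the other parameters $f_t=\Exp{f(x_t)}$, $D_t=\Exp{D_h(x_*,x_t)}$, $f_*=f(x_*)$, $\delta=\mb/n$, $\varphi=L$ unchanged). Concretely, setting $x=x_*$ and $\mu=0$ in Lemma~\ref{iter_decrease_mb} yields the one-step recursion
\[
\Exp{f(x_{t+1})}-f(x_*) \;\leq\; \tfrac{n-\mb}{n}\bigl(\Exp{f(x_t)}-f(x_*)\bigr) \;+\; L\Exp{D_h(x_*,x_t)} \;-\; L\Exp{D_h(x_*,x_{t+1})}.
\]

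The key observation is that, in contrast to the strongly convex case, the Bregman term no longer contracts geometrically, so the rate must come from telescoping in $t$ rather than from a per-step contraction factor. I would therefore write $F_t=\Exp{f(x_t)}-f(x_*)$, $D_t=\Exp{D_h(x_*,x_t)}$, and sum the recursion above over $t=0,1,\ldots,k-1$. The right-hand side collapses to $L(D_0-D_k)+\tfrac{n-\mb}{n}\sum_{t=0}^{k-1}F_t$, which I would upper bound by dropping $-LD_k\leq 0$, and the left-hand side becomes $\sum_{t=1}^{k}F_t$. Rearranging by moving the $\tfrac{n-\mb}{n}F_t$ terms for $t=1,\ldots,k-1$ to the left and the $\tfrac{n-\mb}{n}F_0$ term to the right gives
\[
F_k \;+\; \tfrac{\mb}{n}\sum_{t=1}^{k-1}F_t \;\leq\; L\,D_h(x_*,x_0) \;+\; \tfrac{n-\mb}{n}\bigl(f(x_0)-f(x_*)\bigr).
\]

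Finally, since the coefficients on the left-hand side are $(\tfrac{\mb}{n},\tfrac{\mb}{n},\ldots,\tfrac{\mb}{n},1)$ and sum to $1+\tfrac{\mb(k-1)}{n}$, dividing through by this total normalizes them into the probability vector $c=(c_1,\dots,c_k)$ described in the statement and produces the advertised bound. I do not expect any serious obstacle here: the argument is essentially a bookkeeping exercise, and Lemma~\ref{iter_decrease_mb} already encapsulates all of the work (three-point property, separability of $h$ and $Q$, uniform sampling of $M_t$). The only minor subtlety is the need to drop $-LD_k$ (one cannot, as in the $\mu>0$ case, convert it back into a multiple of $F_k$), which is precisely why the final bound holds only for the weighted average and not for $F_k$ alone.
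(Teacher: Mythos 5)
Your proposal is correct and is essentially the same argument as the paper's: the paper invokes the general weighted-telescoping machinery of Lemma~\ref{l:rate_from_iter} (via equation~\eqref{RCD_tel}) and then sets $\mu=0$, at which point all the geometric weights $(\varphi/(\varphi-\delta\psi))^t$ collapse to $1$; your direct telescoping of the $\mu=0$ recursion, followed by dropping $-LD_k\le 0$ and rearranging, produces exactly the same intermediate inequality $F_k + \tfrac{\mb}{n}\sum_{t=1}^{k-1}F_t \le L D_h(x_*,x_0)+\tfrac{n-\mb}{n}F_0$. Normalizing by $1+(k-1)\mb/n$ then gives the stated bound, matching the paper step for step.
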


\begin{proof}
For simplicity, denote $r_t=\Exp{f(x_t)}-f(x_*)$.
We can follow the proof of Theorem~\ref{theorem_rcd_mb} using Lemma~\ref{l:rate_from_iter} to get the equation \eqref{RCD_tel}, which can be rewritten for $\mu=0$ as follows:
\begin{equation*}
LD_{h}(x,x_0)
\oo{\geq} 
r_{k}+
\frac{\mb}{n}\sum_{t=1}^{k-1}  r_t -\frac{n-\mb}{n}r_0.
\end{equation*}
The inequality above can be easily rearranged as

\begin{equation*}
\frac{LD_{h}(x,x_0)+\frac{n-\mb}{n}r_0}{1+(k-1)\frac{\mb}{n}}
\oo{\geq}  
\frac{1}{1+(k-1)\frac{\mb}{n}}\left(r_{k}+
\frac{\mb}{n}\sum_{t=1}^{k-1}  r_t \right).
\end{equation*}
\end{proof}

As previously, Theorem~\ref{theorem_rcd_nsc_mb} captures known results of Relative Gradient Descent for $\mb=n$ (Theorem~\ref{t:primal_scheme}).

\subsection{Improvements using a symmetry measure}

For completeness, we provide a different analysis of Algorithm~\algR\, using a different power function which is a combination of $f(x_{t})-f(x_*) $ and $ D_h(x_{*},x_t)$. A similar analysis in the standard smooth setting was done in \cite{PCD_complexity}.

It would be useful to define a symmetry measure of Bregman distance here. 

\begin{definition}[Symmetry measure]
Given a reference function $h$, the symmetry measure of $D_h$ is defined by
\begin{equation}\label{eq:alfa}
\alpha(h) \oo{\eqdef} \inf_{x,y} \left\{\frac{D_h(x,y)}{D_h(y,x)} \;\Big| \; x\neq y \right\}.
\end{equation}
\end{definition}

Note that we clearly have $0\leq\alpha(h)\leq 1$. A symmetry measure $\alpha_h$ was also used in \cite{bauschke2016NoLips}. In our case, considering the symmetric measure for $D_h$ would improve the result from the next theorem. However our results does not rely on it and hold even if there is no symmetry present, i.e. $\alpha(h)=0$.

\begin{theorem}[Convergence rate for Algorithm~\algR ]
Suppose that $f$ is $L$--smooth and $\mu$--strongly convex relative to separable function $h$. Denote $Z_t^{L}\eqdef LD_h(x_{*},x_t)+f(x_{t})-f(x_*)$.  Running Algorithm~\algR\ for $k$ iterations we obtain:
$$ 
\Exp{f(x_{k})-f(x_*)}
\oo{\leq} 
\frac{Z_0^{L}}{1+\frac{\mb}{n}k}$$
when $\mu =0$ and 
$$ 
\Exp{Z^{L}_{k}}
\oo{\leq} 
\left(1- \frac{\mb}{n} \frac{\mu}{L} - \frac{\mb}{n}\Big( 1-\frac{\mu}{L} \Big)\frac{\mu \alpha(h)}{\mu \alpha(h)+L}\right)^k Z_0^{L}
 $$
when $\mu > 0$.
\label{theorem_rcd_mb_good}
\end{theorem}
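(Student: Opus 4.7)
The plan is to build a Lyapunov argument around $Z_t^L = L D_h(x_*, x_t) + f(x_t) - f(x_*)$. Starting from Lemma~\ref{iter_decrease_mb} applied at $x = x_*$ and moving the $-L\Exp{D_h(x_*,x_{t+1})}$ term to the left, one obtains the master inequality
\[
\Exp{Z_{t+1}^L} \;\leq\; \tfrac{n-\mb}{n}\Exp{f(x_t) - f(x_*)} + \left(L - \tfrac{\mb}{n}\mu\right)\Exp{D_h(x_*, x_t)},
\]
and from here the proof branches on the value of $\mu$.

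For $\mu = 0$, the right-hand side equals $\Exp{Z_t^L} - \tfrac{\mb}{n}\Exp{f(x_t) - f(x_*)}$. Telescoping from $t=0$ to $k-1$ and using $Z_k^L \geq f(x_k) - f(x_*) \geq 0$ yields
\[
\tfrac{\mb}{n}\sum_{t=0}^{k-1}\Exp{f(x_t) - f(x_*)} + \Exp{f(x_k) - f(x_*)} \;\leq\; Z_0^L.
\]
Since $\{\Exp{f(x_t)}\}$ is nonincreasing (choose $x = x_t$ in Lemma~\ref{iter_decrease_mb}, as already noted after its proof), each summand is at least $\Exp{f(x_k) - f(x_*)}$; collecting the $1 + \tfrac{\mb}{n}k$ copies gives the sublinear bound.

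For $\mu > 0$, the target is a one-step contraction $\Exp{Z_{t+1}^L} \leq \rho\,\Exp{Z_t^L}$ with $\rho$ equal to the expression in the theorem. The crucial new ingredient is a bridge between the two parts of $Z_t^L$: combining relative strong convexity at $x_*$ with first-order optimality $\langle\nabla f(x_*), x_t - x_*\rangle \geq 0$ gives $f(x_t) - f(x_*) \geq \mu D_h(x_t, x_*)$, and the symmetry estimate $D_h(x_t, x_*) \geq \alpha(h) D_h(x_*, x_t)$ from \eqref{eq:alfa} upgrades this to $\Exp{f(x_t) - f(x_*)} \geq \mu\alpha(h)\,\Exp{D_h(x_*, x_t)}$. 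Writing $A_t$ and $B_t$ for the two sides, the contraction reduces to the scalar inequality
\[
\left(\rho - \tfrac{n-\mb}{n}\right)A_t \;\geq\; \left(L(1-\rho) - \tfrac{\mb}{n}\mu\right)B_t,
\]
and $\rho$ is picked so that both coefficients are nonnegative and their ratio equals exactly $\mu\alpha(h)$; the bridge inequality then closes the step and iterating $k$ times yields the bound.

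The main obstacle is the algebraic calibration of $\rho$: one must verify that the stated value satisfies $\rho - \tfrac{n-\mb}{n} = \tfrac{\mb}{n}\tfrac{L-\mu}{\mu\alpha(h)+L}$ and $L(1-\rho) - \tfrac{\mb}{n}\mu = \tfrac{\mb}{n}\tfrac{\mu\alpha(h)(L-\mu)}{\mu\alpha(h)+L}$, so that the combined linear inequality collapses precisely to the relative-strong-convexity-plus-symmetry bound. Reassuringly, the contraction rate degrades to $1 - \tfrac{\mb}{n}\tfrac{\mu}{L}$ when $\alpha(h) = 0$, matching the simpler analysis of Theorem~\ref{theorem_rcd_mb} and confirming that the improvement is a pure consequence of exploiting the symmetry of $D_h$.
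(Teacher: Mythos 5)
Your proposal is correct and follows essentially the same route as the paper: start from Lemma~\ref{iter_decrease_mb} at $x=x_*$, telescope with monotonicity of $\{\Exp{f(x_t)}\}$ for $\mu=0$, and for $\mu>0$ establish a one-step contraction $\Exp{Z_{t+1}^L}\leq\rho\,\Exp{Z_t^L}$ by invoking the symmetry bound $f(x_t)-f(x_*)\geq\mu\alpha(h)D_h(x_*,x_t)$. The only cosmetic difference is presentation: the paper starts from $\Exp{Z_{t+1}^L}\leq\Exp{Z_t^L}-\tfrac{\mb}{n}\Exp{Z_t^\mu}$ and rewrites $Z_t^\mu$ as a convex combination, whereas you calibrate $\rho$ directly so that the two coefficients $\rho-\tfrac{n-\mb}{n}$ and $L(1-\rho)-\tfrac{\mb}{n}\mu$ are nonnegative with ratio exactly $\mu\alpha(h)$; the two identities you leave to check do hold and collapse to the same $\rho$ as in the theorem.
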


\begin{proof}
 From Lemma~\ref{iter_decrease_mb} we have
\begin{equation}
\Exp{Z_{t+1}^{L}}\oo{\leq} \Exp{Z_t^{L}} - \frac{\mb}{n}\Exp{Z_{t}^{\mu}}.
\label{NoSumApproach}
\end{equation}
If $\mu=0$, we can easily telescope the above and get the following inequality
\begin{equation*}
\Exp{f(x_{k})-f(x_*)}\oo{\leq} Z_0^{L} - \frac{\mb}{n}k\Exp{f(x_{k})-f(x_*)},
\end{equation*}
which leads to 
\begin{equation*}
\Exp{f(x_{k})-f(x_*)}\oo{\leq} \frac{Z_0^{L}}{1+\frac{\mb}{n}k}. 
\end{equation*}

Let us look at the case when $\mu \neq 0$. Firstly note that from relative strong convexity of $f$ combining with definition of the symmetric measure $\alpha(h)$ we have 
\begin{equation}
f(x_t)-f(x_*)
\oo{\geq}
 \mu D_h(x_t,x_*)
 \oo{\geq}
  \mu \alpha(h) D_h(x_*,x_t) .
\label{rcd_z_symm_usage}
\end{equation}

Therefore, \eqref{NoSumApproach} can be rewritten as

\begin{eqnarray*}
\Exp{Z_{t+1}^{L}}
&\stackrel{\eqref{NoSumApproach}}{\leq} &
\Exp{Z_t^{L}} - \frac{\mb}{n}\Exp{Z_{t}^{\mu}}
\\
&=&
 \Exp{Z_t^{L}} - \frac{\mb}{n}\frac{\mu}{L}\Exp{Z_{t}^{L}}-\frac{\mb}{n}\Big( 1-\frac{\mu}{L} \Big) (f(x_t)-f(x_*))
 \\
&= & 
 \Exp{Z_t^{L}} - 
 \frac{\mb}{n}\frac{\mu}{L}\Exp{Z_{t}^{L}}
 -\frac{\mb}{n}\Big( 1-\frac{\mu}{L} \Big)
 \frac{\mu \alpha(h)}{\mu \alpha(h)+L}(f(x_t)-f(x_*))
 \\
 && \qquad 
  -\frac{\mb}{n}\Big( 1-\frac{\mu}{L} \Big)
 \frac{L}{\mu \alpha(h)+L}(f(x_t)-f(x_*)) 
\\
&\stackrel{\eqref{rcd_z_symm_usage}}{\leq}& 
 \Exp{Z_t^{L}} - 
 \frac{\mb}{n}\frac{\mu}{L}\Exp{Z_{t}^{L}}
 -\frac{\mb}{n}\Big( 1-\frac{\mu}{L} \Big)
 \frac{\mu \alpha(h)}{\mu \alpha(h)+L}(f(x_t)-f(x_*))
 \\
 && \qquad
 - 
 \frac{\mb}{n}\Big( 1-\frac{\mu}{L} \Big)
 \frac{L}{\mu \alpha(h)+L}\mu \alpha(h)D_h(x_*,x_t) 
\\
&=& 
 \Exp{Z_t^{L}} - 
 \frac{\mb}{n}\frac{\mu}{L}\Exp{Z_{t}^{L}}
 -\frac{\mb}{n}\Big( 1-\frac{\mu}{L} \Big)\frac{\mu \alpha(h)}{\mu \alpha(h)+L}\Exp{Z_t^{L}}
 \\
 &=&
 \left(1- \frac{\mb}{n} \frac{\mu}{L} - \frac{\mb}{n}\Big( 1-\frac{\mu}{L} \Big)\frac{\mu \alpha(h)}{\mu \alpha(h)+L}\right) \Exp{Z_t^{L}}.
\end{eqnarray*}

Using recursively the inequality above, we get
\begin{equation*}
\Exp{Z^{L}_{k}}
\oo{\leq}
 \left(1- \frac{\mb}{n} \frac{\mu}{L} - \frac{\mb}{n}\Big( 1-\frac{\mu}{L} \Big)\frac{\mu \alpha(h)}{\mu \alpha(h)+L}\right) ^k Z_0^{L}.
\end{equation*}

\end{proof}

Note that as soon as $\alpha(h)=0$, rate from the theorem above is up to the constant same as rate from Theorem~\ref{theorem_rcd_mb} since 
$ \left( L/ (L-\frac{\mb}{n}\mu)\right) ^{-1}
=
1-\frac{\mb}{n}\frac{\mu}{L}. $
However both theorems are measuring a convergence rate for a different quantity. On the other hand, in the best case if $\alpha(h)=1$ we have 
\[
1-\frac{\mb}{n} \frac{\mu}{L} - \frac{\mb}{n}\Big( 1-\frac{\mu}{L} \Big)\frac{\mu }{\mu+L}
\oo{=}
1-\frac{\mb}{n} \frac{\mu}{L} -\frac{\mb}{n} \frac{\mu}{L} \left(1-\frac{2\mu}{L+\mu} \right)
\oo{\geq} 
1-2\frac{\mb}{n}\frac{\mu}{L},
\]
thus the convergence rate we obtained might be up to 2 times faster comparing to rate from Theorem~\ref{theorem_rcd_mb}. Thus the convergence rate is also up to 2 times faster comparing to Theorem~\ref{t:primal_scheme} for the case $\mb=n$ if $\alpha(h)<0$. On the other hand, Theorem~\ref{theorem_rcd_mb_good} provides us with convergence rate of $\Exp{D_h(x_*,x_k)}$,  as the following inequality trivially holds:
$$
\Exp{D_h(x_*,x_k)}\quad\leq\quad \frac{\Exp{Z^{L}_{k}}}{L}.
$$

Suppose that we have a fixed budget on the total work of the algorithm, i.e. we can make only $k/\mb$ iterations. It is a simple exercise to notice that the bound on the suboptimality for Theorems~\ref{theorem_rcd_mb}, \ref{theorem_rcd_nsc_mb} and \ref{theorem_rcd_mb_good} after $k/\mb$ iterations is not getting better when minibatch size $\mb$ is decreasing. We address next section in order to solve this issue.

\section{Relative Randomized Coordinate Descent with Large Stepsizes \label{S:eso}}

This section addresses the issue of the previous section - allowing a better usage of randomness in order to obtain a faster convergence rate comparing to the deterministic setting. Theorem~\ref{theorem_rcd_eso} later in this section is one of two key results (together with the analysis of Relative Stochastic Gradient Descent) of this work. 

As previously, we assume that $h$ is separable function, i.e., $h(x)=\sum_{i=1}^n h^{(i)}\left(x^{(i)}\right)$ and $Q$ is block separable $Q=\prod_{i=1}^nQ^{(i)}$. For notational simplicity, let us define weighted Bregman distance and weighted inner product: 
\begin{eqnarray*}
D_ h(x,y)_v
&\eqdef&
\sum_{i=1}^n 
v^{(i)}
\left(
h^{(i)}\left(x^{(i)}\right)
-h^{(i)}\left(y^{(i)}\right)
- \nabla h^{(i)}\left(y^{(i)}\right)\cdot(x^{(i)}-y^{(i)}) 
\right),
\\
\langle a,b \rangle_p &\eqdef& \sum_{i=1}^n p_ia_ib_i
,
\end{eqnarray*}
where $v,p\in \R^n$ are some positive vectors.

It would be also useful to introduce the separable version of relative strong convexity, as a generalization of Relative Strong Convexity with respect to a separable function $h$, allowing different strong convexity parameters for each coordiante.

\begin{definition}[Relative strong convexity, separable version]
Suppose that $w\in \mathbb{R}^n_+$. Function $f$ is $w$-strongly convex relative to separable function $h$ on $Q$ if for any $x,\,y\in \mathrm{int}(Q)$ the following inequality holds  
\begin{equation}
f(y)\oo{\geq} f(x) +\langle \nabla f(x),y-x \rangle+ D_h(y,x)_w.
\label{eq:sc_vec_def}
\end{equation}
\end{definition}

Throughout this section, we will assume the separable version of relative strong convexity, as it captures relative $\mu$--strong convexity as a special case for $w=\mu\ones$ and might potentially bring a better convergence result.

\subsection{Expected separable overapproximation}

In our analysis, we use $h$--ESO assumption defined below instead of relative smoothness assumption. In the standard smoothness setting, it was firstly introduced in \cite{pcd_takac}.

\begin{definition}[$h$--ESO]
Let $h$ be some separable function and $p=p(\hat{S})$ be a probability vector of the sampling $\hat{S}$, i.e. $p^{(i)}=\Prob(i\in \hat{S})$. Function $f$ admits Expected Separable Overapproximation with respect to function $h$ ($h$--ESO), parameters $\hat{S}$ (probability sampling) and $v$ (vector) if the following inequality holds for all $x,\,q \in \R^n$:
\begin{equation}
\Exp{f\left(x+\sum_{i\in \hat{S}}q^{(i)}\ones^i\right)}
\oo{\leq}
 f(x)+\langle \nabla f(x),q \rangle_p+D_h(x+q,x)_{p\circ v}.
\label{eq:eso_def}
\end{equation}
For simplicity, we write $(f,\hat{S})\sim \mathrm{ESO}_h(v)$.
\end{definition}
Above, ``$\circ$'' denotes Hadamard product, i.e. element-wise product of two vectors. Note that if $f$ is $L$--smooth relative to the separable function $h$, then we have

\begin{eqnarray*}
\Exp{f\left(x+\sum_{i\in \hat{S}}q^{(i)}\ones^i\right)}
&\leq &
\Exp{f(x)+\left \langle \nabla f(x),\sum_{i\in \hat{S}}q^{(i)}\ones^i \right \rangle + LD_h\left(x+\sum_{i\in \hat{S}}q^{(i)}\ones^i,x\right)}
\\
&= &
 f(x)+\langle \nabla f(x),q \rangle_p+D_h(x+q,x)_{pL}
\end{eqnarray*}
and thus $(f,\hat{S})\sim \mathrm{ESO}_h(L\ones)$. In other words, if $f$ is $L$--smooth relative to separable function $h$, then $(f,\hat{S})\sim \mathrm{ESO}_h(L\ones)$ with any sampling $\hat{S}$.

However, when considering a specific sampling strategy, it might be possible to choose smaller ESO parameters $v$, allowing us to obtain a faster convergence comparing to the deterministic method using full gradient in each iteration. As we show later, if ESO parameters are chosen to be smoothness parameters, we do not obtain any speedup comparing the deterministic method.

There are various examples of functions satisfying $h$--ESO:
\begin{itemize}
\item If $h(x)=\|x\|^2/2$, definition of $h$--ESO matches definition of standard Expected Separable Overapproximation introduced in \cite{ESO}, \cite{pcd_takac}. Under the assumption that $f$ is $A^\top A$ smooth, i.e. $\forall x,y$:
\begin{equation*}
f(y)
\oo{\leq}
 f(x)+\langle \nabla f(x),y-x\rangle+\frac{1}{2}(y-x)^\top A^\top A (y-x),
\end{equation*}
one can prove that $(f,\hat{S})\sim \mathrm{ESO}_h(v)$ if 

$$
P(\hat{S}) \circ (A^{\top}A)
\oo{\preceq}
 \mathrm{Diag}\left(p(\hat{S})\circ v\right),
$$

where $P(\hat{S})$ and $p(\hat{S})$ are respectively probability matrix and probability vector of sampling $\hat{S}$. 

Note that $A^\top A$ smoothness is equivalent to relative smoothness for $L=1,\,h(x)=\tfrac12 x^{\top}A^{\top}Ax$ and arises naturally if the objective $f$ is in the form 
\[
f(x)
\oo{\eqdef}
\sum_{i=1}^n \phi^{(i)}\left(M_{(i)}x\right),
\]
where function $\phi^{(i)}$ is $\gamma^{(i)}$ smooth. In this case, $f$ is $A^\top A\eqdef \sum_{i=1}^n \gamma^{(i)}M_{(i)}^{\top} M_{(i)}$ smooth. As an example, for uniform sampling (when every iteration is only one coordinate sampled uniformly at random), $v$ can be chosen as $\mathrm{Diag}\left(A^{\top}A\right)$. In contrast, the tightest smoothness parameter that can be chosen here is the maximal eigenvalue of $\left(A^{\top}A\right)$, which is in general even greater than maximal diagonal element of $\left(A^{\top}A\right)$. 

For more details about how to choose $v$ for arbitrary sampling $\hat{S}$ or proofs of the statements above, see \cite{ESO}.
\item D-optimal design problem.
\begin{eqnarray*}
\min_x  && f(x) \eqdef − \log \det\left(H\,\mathrm{Diag}(x)H^\top\right)
\\
\text{subject to} && \langle \ones, x\rangle  = 1
\\
&& x \in \R^n_+,
\end{eqnarray*}
where matrix $H\in \R^{m\times n}$ has rank $n$, $n \geq m + 1$.
In this case f is 1 relative smooth with respect to $h(x)\eqdef-\sum_{i=1}^n \log\left(x^{(i)}\right)$ \cite{Rel_smoothness_nesterov}. Thus, function $(f,\hat{S})\sim \mathrm{ESO}_h(\ones)$ for any sampling $\hat{S}$.   

\item Poisson linear inverse problem. 
The task here is to find vector $x\in \R^n_+$ to minimize $\mathrm{KL}(Ax\|b)$ for matrix $A\in \R^{m\times n}_+$ and vector $b \in \R^n_+$. 

Thus optimization problem here is the following:
\begin{eqnarray*}
\min_x  && f(x) \eqdef \sum_{i=1}^m f^{(i)}(x)=\sum_{i=1}^n\Big( b^{(i)} \log \frac{b^{(i)}}{(Ax)^{(i)}}+(Ax)^{(i)}-b^{(i)} \Big)
\\
\text{subject to} &&  x \in  \R^n_+.
\end{eqnarray*}

Again, in this case $f$ is $\sum_{i=1}^m b^{(i)}$--smooth relative to $h(x)\eqdef-\sum_{i=1}^n \log(x^{(i)})$ \cite{bauschke2016NoLips}. Thus, as before, $(f,\hat{S})\sim \mathrm{ESO}_h(\ones \sum_{i=1}^m b^{(i)})$ for any sampling $\hat{S}$. 

Considering regularized Poisson linear inverse problem:
$$
\min_x \quad f(x)\oo{\eqdef}\mathrm{KL}(Ax\|b)+\mu r(x)
$$
with logarithmic regularizer $r(x)=-\sum_{i=1}^m \log(x^{(i)})$. Then we have \[(f,\hat{S})\sim \mathrm{ESO}_h\left(\left(\sum_{i=1}^m b^{(i)}+ \mu\right)\ones\right)\]
for any sampling $\hat{S}$.

\end{itemize}

The following lemma gives an example on function $f$ which is $h$--ESO where $h$ is not $\frac12 \|x\|^2$ with parameters $v$ potentially $n$ times smaller than relative smoothness constant $L$.

\begin{lemma}
Suppose that
\[
f(x)\oo{\eqdef}f_1(x)+f_2(x),
\]
where $f_1$ is $L_1$ smooth relative to $h_1$ and $f_2$ is $A^{\top}A$ smooth ($1$--smooth relative to $h_2(x)\eqdef\tfrac{1}{2}x^{\top}A^{\top}Ax$). 
Let us consider $\hat{S}$ to be uniform sampling which samples a single coordiante (uniformly) each iteration. 
Then, \[(f,\hat{S})\sim \mathrm{ESO}_h\left(\max\left(L_1\ones,\mathrm{diag}(A^{\top}A)\right)\right)\]  for $$h(x)\oo{\eqdef}h_1(x)+\frac12\|x\|^2.$$
\label{l:eso_example}
\end{lemma}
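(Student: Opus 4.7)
The plan is to decompose the ESO inequality along the additive structure $f=f_1+f_2$ and combine two separate ESO bounds, one for each summand. First, note that since $h_1$ is separable and $\frac{1}{2}\|x\|^2=\sum_i \frac{1}{2}(x^{(i)})^2$ is separable, their sum $h$ is separable, so the ESO definition \eqref{eq:eso_def} is applicable. Moreover, since Bregman distance is additive in its generating function,
\[
D_h(x+q,x)_{p\circ v} \;=\; D_{h_1}(x+q,x)_{p\circ v}+D_{\frac12\|\cdot\|^2}(x+q,x)_{p\circ v},
\]
and both summands are coordinate-wise non-negative.

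Next, I would handle the two components separately. For $f_1$, since it is $L_1$-smooth relative to the separable function $h_1$, the general observation made in the paragraph immediately following the definition of ESO gives $(f_1,\hat{S})\sim \mathrm{ESO}_{h_1}(L_1\ones)$ for any sampling, in particular for the uniform single-coordinate sampling with $p=\tfrac{1}{n}\ones$. For $f_2$, which is $A^\top A$-smooth (equivalently $1$-smooth relative to $h_2(x)=\tfrac12 x^\top A^\top A x$), I would invoke the standard ESO criterion quoted in the first bullet of the list preceding the lemma: $(f_2,\hat{S})\sim\mathrm{ESO}_{\frac12\|\cdot\|^2}(v')$ whenever $P(\hat{S})\circ(A^\top A)\preceq \mathrm{Diag}(p(\hat{S})\circ v')$. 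For uniform single-coordinate sampling, $P(\hat{S})=\tfrac{1}{n}\mathbf{I}$, so this condition reduces to $\tfrac{1}{n}\mathrm{Diag}(\mathrm{diag}(A^\top A))\preceq \tfrac{1}{n}\mathrm{Diag}(v')$, i.e., $v'\geq \mathrm{diag}(A^\top A)$ coordinate-wise, so we may take $v'=\mathrm{diag}(A^\top A)$.

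Summing the two ESO inequalities, using linearity of expectation and of the inner product $\langle\nabla f,q\rangle_p=\langle \nabla f_1,q\rangle_p+\langle\nabla f_2,q\rangle_p$, yields
\[
\Exp{f\!\left(x+\!\!\sum_{i\in\hat{S}} q^{(i)}\ones^i\right)} \leq f(x)+\langle\nabla f(x),q\rangle_p + D_{h_1}(x+q,x)_{p\circ L_1\ones}+D_{\frac12\|\cdot\|^2}(x+q,x)_{p\circ\mathrm{diag}(A^\top A)}.
\]
Finally, with $v\eqdef \max(L_1\ones,\mathrm{diag}(A^\top A))$ taken coordinate-wise, monotonicity of the weighted Bregman distance in the weight vector (which follows from non-negativity of each coordinate-wise summand in the definition of $D_h(\cdot,\cdot)_v$) gives $D_{h_1}(x+q,x)_{p\circ L_1\ones}\leq D_{h_1}(x+q,x)_{p\circ v}$ and $D_{\frac12\|\cdot\|^2}(x+q,x)_{p\circ\mathrm{diag}(A^\top A)}\leq D_{\frac12\|\cdot\|^2}(x+q,x)_{p\circ v}$. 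Adding and using the additive decomposition of $D_h$ in the first paragraph delivers the conclusion.

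The only mildly subtle step is the verification of ESO for $f_2$ under uniform single-coordinate sampling; everything else is bookkeeping. I do not anticipate a real obstacle, but if one wanted a self-contained derivation of the $f_2$ bound, one could avoid invoking the general matrix criterion and instead directly use that under single-coordinate sampling, $\Exp{f_2(x+q^{(i)}\ones^i)}=\tfrac{1}{n}\sum_i f_2(x+q^{(i)}\ones^i)$, and then bound each $f_2(x+q^{(i)}\ones^i)$ via the coordinate-wise smoothness constant $(A^\top A)_{ii}$ of $f_2$ along direction $\ones^i$.
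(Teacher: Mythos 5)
Your proposal is correct and takes essentially the same route as the paper: split $f=f_1+f_2$, obtain $(f_1,\hat S)\sim\mathrm{ESO}_{h_1}(L_1\ones)$ from relative smoothness, obtain $(f_2,\hat S)\sim\mathrm{ESO}_{\tfrac12\|\cdot\|^2}(\mathrm{diag}(A^\top A))$ from the standard ESO theory, sum the two inequalities, and dominate both weight vectors by $\max(L_1\ones,\mathrm{diag}(A^\top A))$. You fill in the routine verification that $P(\hat S)=\tfrac1n\mathbf I$ for single-coordinate uniform sampling and spell out the monotonicity-in-weights step, both of which the paper's proof leaves implicit, but the argument is the same.
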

\begin{proof}
From ESO theory in standard smooth setting we have that $(f_2,\hat{S})\sim \mathrm{ESO}_{h_2} \left(\mathrm{diag}\left(A^{\top}A\right)\right)$ for $h_2(x)\eqdef\frac12\|x\|^2$. Clearly, $(f_1,\hat{S})\sim \mathrm{ESO}_{h_1}(L_1\ones)$. Summing ESO inequality for $f_1$ and $f_2$ we get

\begin{eqnarray*}
\Exp{f\left(x+\sum_{i\in \hat{S}}q^{(i)}\ones^i\right)}
&=&
\Exp{f_1\left(x+\sum_{i\in \hat{S}}q^{(i)}\ones^i\right)
+
f_2\left(x+\sum_{i\in \hat{S}}q^{(i)}\ones^i\right)}
\\
&\leq&
 f_1(x)+\langle \nabla f_1(x),q \rangle_p+D_{h_1}(x+q,x)_{p\circ L_1\ones}
 \\
 && \qquad +
  f_2(x)+\langle \nabla f_2(x),q \rangle_p+\frac12\|q\|^2_{p\circ v_2}
  \\
 &\leq&
 f(x)+\langle \nabla f(x),q \rangle_p+D_{h}(x+q,x)_{p\circ v},
\end{eqnarray*}
which concludes the proof.

\end{proof}

Note that in the lemma above $f$ is $L$--smooth relative to $h$ where $L\eqdef\mathrm{max}\left(L_1,\lambda_{\mathrm{max}}(A^{\top}A)\right)$, and in general one cannot find tighter constant $L$.  Clearly, $v^{(i)}\leq L$ for all $i$ and in the case when $A=\ones$ and $L_1<1$ we have $\lambda_{\mathrm{max}}(A^{\top}A)=n$ and thus $L=n$, in contrast to $v=\ones$, thus $L$ might be $n$ times larger than ESO parameters $v$. We also note without the proof that it is possible to design ESO parameters for block coordinate descent as well analogously. 

\subsection{Algorithm}
Let us now proceed with the algorithm. We introduce here Relative Randomized Coordinate Descent (relRCD) - algorithm for minimizing functions satisfying Relative ESO assumption. The main idea is very simple - each iteration sample a subset of coordinates with respect to sampling $\hat{S}$ and update them according to Relative ESO assumption. We only consider sampling strategies such that all coordinates have equal chance to be sampled. 

\begin{algorithm}[H]
\textbf{Input: }{Initial iterate $x_0$, separable reference function $h$, positive vector $v$ and sampling $\hat{S}$ $(f,\hat{S})\sim \mathrm{ESO}_h(v)$ and $\Prob(i\in \hat{S})=\Prob(j\in \hat{S})$ for all $i,j\leq n$.}\\
\For {$t= 0,1,\dots, k-1$} {
	 \begin{enumerate}
\item Choose randomly $M_t\in \{1,2,\dots m\}$ according to the sampling $\hat{S}$  
\item Set $Q_t\leftarrow\left\{x \;\Big |\;x=x_t+\sum_{i\in M_t}\mathrm{span}\left(\ones^i\right) \right\}$ 
\item Set $x_{t+1}\leftarrow \argmin_{x\in Q_t}\langle\nabla f(x_t),x\rangle+D_h(x,x_t)_v$ 
\end{enumerate}
 }
\textbf{return} $x_k$
\caption{relRCD (Relative Randomized Coordinate Descent)}
\end{algorithm}

\subsection{Analysis}

First of all, we introduce the variant of three point property, which we will use later in the analysis. 
For simplicity we denote probability vector of sampling $\hat{S}$ as $p$ throughout this section. Since all coordinates have the same probability to be sampled, we can write $p=p_0\ones$ for some scalar $p_0$ such that $0<p_0\leq 1$. 

\begin{lemma}[Three point property for ESO]

Let $c,\, v,\,p \in \R^n$ and $D_h(\cdot,\cdot)$ be a Bregman distance for separable function $h(x)=\sum_{i=1}^nh^{(i)}(x)$, both defined on some arbitrary set $Q$. For a given $z \in Q$ denote
$$z_+\oo{\eqdef}\argmin_{x\in Q} \left\{\langle c,x \rangle_p+D_{h}(x,z)_{p\circ v}\right\},$$
where 
$
D_{h}(x,z)_{p\circ v}
= 
\sum_{i=1}^n D_{h^{(i)}}\left(x^{(i)},z^{(i)}\right)
p^{(i)}v^{(i)}
$.
Then for all $x\in Q$ we have
\begin{equation}
\langle c,x \rangle_p+D_{h}(x,z)_{p\circ v}
\oo{\geq} 
\langle c,z_+ \rangle_p+D_{h}(z_+,z)_{p\circ v}+D_{h}(x,z_+)_{p\circ v} .
\label{eq:tpp_eso}
\end{equation}
\end{lemma}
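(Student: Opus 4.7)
The plan is to reduce the lemma to the standard three point property of Lemma~\ref{L:tpp} by exploiting the separability of $h$. The key observation is that the weighted Bregman distance $D_h(\cdot,\cdot)_{p\circ v}$ is itself a bona fide Bregman distance: define the separable reference function
\[
\tilde h(x) \eqdef \sum_{i=1}^n p^{(i)} v^{(i)} h^{(i)}\!\left(x^{(i)}\right).
\]
Each $h^{(i)}$ is strictly convex and differentiable, and because $p^{(i)}v^{(i)}>0$, so is $\tilde h$. A one-line calculation using the definition \eqref{eq:bregman_def} and separability verifies $D_{\tilde h}(x,z) = D_h(x,z)_{p\circ v}$.

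Next, note that $\phi(x) \eqdef \langle c,x\rangle_p$ is linear in $x$, hence convex and differentiable on $Q$. With these identifications, the point $z_+$ in the statement of the lemma is exactly
\[
z_+ \;=\; \argmin_{x\in Q}\bigl\{\phi(x) + D_{\tilde h}(x,z)\bigr\},
\]
which places us in the setting of Lemma~\ref{L:tpp} verbatim. Applying that lemma yields, for every $x\in Q$,
\[
\phi(x) + D_{\tilde h}(x,z) \;\geq\; \phi(z_+) + D_{\tilde h}(z_+,z) + D_{\tilde h}(x,z_+),
\]
and substituting back the definitions of $\phi$ and $D_{\tilde h}$ gives exactly \eqref{eq:tpp_eso}.

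No real obstacle arises beyond the observation that a positive-weighted sum of per-coordinate Bregman divergences is itself the Bregman divergence of the correspondingly weighted sum of reference functions — an immediate consequence of separability and linearity of differentiation. Should one prefer to avoid invoking Lemma~\ref{L:tpp}, an equivalent direct route is to write the first-order optimality condition for $z_+$ on the convex set $Q$, namely
\[
\bigl\langle p\circ c + p\circ v\circ\bigl(\nabla h(z_+)-\nabla h(z)\bigr),\; x-z_+\bigr\rangle \;\geq\; 0 \quad \text{for all } x\in Q,
\]
and combine it with the classical three-point identity $D_h(x,z) - D_h(x,z_+) - D_h(z_+,z) = \langle \nabla h(z_+)-\nabla h(z),\, x-z_+\rangle$ applied coordinatewise with weights $p^{(i)}v^{(i)}$.
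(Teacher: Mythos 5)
Your argument is correct and is essentially identical to the paper's proof: both define the reweighted reference function $h'(x)=\sum_i p^{(i)}v^{(i)}h^{(i)}(x^{(i)})$ (your $\tilde h$), rewrite $\langle c,\cdot\rangle_p$ as a plain inner product with $c'=c\circ p$, and then invoke Lemma~\ref{L:tpp} verbatim. The supplementary direct route via first-order optimality you sketch at the end is just the standard proof of the three point property itself, so nothing new is needed there.
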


\begin{proof}
Define $c'=c\circ p$ and $h'(x)=\sum_{i=1}^np^{(i)}v^{(i)}h^{(i)}\left(x^{(i)}\right)$. Thus we have
\[ 
z_+\oo{=}\argmin_{x\in Q} \{\langle c',x \rangle+D_{h'}(x,z)\}.
\]
It remains to apply  the three point property (Lemma~\ref{L:tpp}).

\end{proof}

The next lemma provides us with the expected decrease in objective for each iteration of Algorithm~\algE\, and has the same role as Lemma~\ref{iter_decrease_mb} in the analysis if Algorithm~\algR\,.

For notational simplicity, denote throughout this section

\begin{equation}
x_{(t+1,*)}\oo{\eqdef} \mathrm{argmin}_{x\in Q} {\langle\nabla f(x_t)}\ x\rangle+D_h(x,x_t)_v.
\label{eq:t+1_star}
\end{equation}

\begin{lemma}[Iteration decrease for Algorithm~\algE]
Suppose that $f$ is $w$--Relative Strongly Convex with respect to $h$ and $(f,\hat{S})\sim \mathrm{ESO}_h(v)$ for $p(\hat{S})=p=p_0\ones$. Denote $\Delta=\mathrm{min} \frac{w^{(i)}}{v^{(i)}} $. Then, one iteration of relRCD satisfies
\begin{equation*}
\Exp{f(x_{t+1})} 
\oo{\leq} 
(1-p_0)\Exp{f(x_{t})} +p_0f(x_*) +(1-p_0\Delta)\Exp{D_h(x_*,x_t)_{ v}}-\Exp{D_h(x_*,x_{t+1})_v}.
\end{equation*}
\label{L: eso^{(i)}teration}
\end{lemma}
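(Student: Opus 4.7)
The plan is to mimic the structure of the proof of Lemma~\ref{iter_decrease_mb}, replacing the relative smoothness inequality by the ESO inequality and the standard three point property by its ESO variant \eqref{eq:tpp_eso}. All the weighted Bregman distances must be handled coordinate-wise, and the key new ingredient is the definition $\Delta = \min_i w^{(i)}/v^{(i)}$, which lets us compare $D_h(\cdot,\cdot)_{v-w}$ to $D_h(\cdot,\cdot)_v$.

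First, I would observe that the update rule of Algorithm~\algE{} is precisely the ESO-restricted version of the full step: with the notation \eqref{eq:t+1_star}, $x_{t+1} - x_t$ agrees with $x_{(t+1,*)} - x_t$ on coordinates in $M_t$ and is zero elsewhere, and since $h$ is separable this is exactly the minimizer that arises in the ESO inequality \eqref{eq:eso_def} with $q = x_{(t+1,*)} - x_t$. Applying \eqref{eq:eso_def} with $p = p_0 \mathbf{1}$ yields
\[
\Exp{f(x_{t+1})\mid x_t} \leq f(x_t) + p_0\langle \nabla f(x_t), x_{(t+1,*)}-x_t\rangle + p_0 D_h(x_{(t+1,*)},x_t)_v.
\]

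Next, I would invoke the ESO three point property \eqref{eq:tpp_eso} with $c = \nabla f(x_t)$, $z = x_t$, $z_+ = x_{(t+1,*)}$ (which is the right minimizer by \eqref{eq:t+1_star}), and evaluate it at $x = x_*$. After subtracting $\langle \nabla f(x_t), x_t\rangle_p$ and dividing by the common $p_0$, this produces
\[
\langle \nabla f(x_t), x_{(t+1,*)}-x_t\rangle + D_h(x_{(t+1,*)},x_t)_v \leq \langle \nabla f(x_t), x_*-x_t\rangle + D_h(x_*,x_t)_v - D_h(x_*,x_{(t+1,*)})_v.
\]
Then I would use separable relative strong convexity \eqref{eq:sc_vec_def} to bound $\langle \nabla f(x_t), x_*-x_t\rangle \leq f(x_*) - f(x_t) - D_h(x_*,x_t)_w$. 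Plugging everything in:
\[
\Exp{f(x_{t+1})\mid x_t} \leq (1-p_0)f(x_t) + p_0 f(x_*) + p_0\bigl[D_h(x_*,x_t)_v - D_h(x_*,x_t)_w\bigr] - p_0 D_h(x_*,x_{(t+1,*)})_v.
\]

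The bracketed term is $\sum_i (v^{(i)} - w^{(i)})D_{h^{(i)}}(x_*^{(i)},x_t^{(i)})$; since $w^{(i)}/v^{(i)} \geq \Delta$ and $D_{h^{(i)}} \geq 0$, it is bounded by $(1-\Delta)D_h(x_*,x_t)_v$. Finally, by separability and the probability that each coordinate is updated,
\[
\Exp{D_h(x_*,x_{t+1})_v\mid x_t} = p_0\, D_h(x_*,x_{(t+1,*)})_v + (1-p_0)\,D_h(x_*,x_t)_v,
\]
so that $p_0 D_h(x_*,x_{(t+1,*)})_v = \Exp{D_h(x_*,x_{t+1})_v\mid x_t} - (1-p_0)D_h(x_*,x_t)_v$. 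Substituting and collecting the coefficients of $D_h(x_*,x_t)_v$ yields $p_0(1-\Delta) + (1-p_0) = 1-p_0\Delta$, giving the claimed inequality conditionally on $x_t$; taking the total expectation via the tower property concludes.

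The only real subtlety is the passage from the weighted difference $D_h(\cdot,\cdot)_{v-w}$ to $(1-\Delta) D_h(\cdot,\cdot)_v$, which requires the nonnegativity of each $D_{h^{(i)}}$ and the coordinate-wise definition of $\Delta$; the rest is the same three-point-property bookkeeping as in Lemma~\ref{iter_decrease_mb}, only with weights $p_0 v^{(i)}$ in place of the uniform factor $L$.
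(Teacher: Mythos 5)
Your proposal is correct and matches the paper's proof step for step: apply the ESO inequality with $q = x_{(t+1,*)}-x_t$, apply the ESO three-point property at $x=x_*$, bound $\langle\nabla f(x_t),x_*-x_t\rangle$ via separable relative strong convexity, use $\Delta$ to compare the $v$- and $w$-weighted Bregman distances, and rewrite $p_0 D_h(x_*,x_{(t+1,*)})_v$ in terms of $\Exp{D_h(x_*,x_{t+1})_v \mid x_t}$. The only cosmetic difference is that you factor out $p_0$ early rather than carrying the $\langle\cdot,\cdot\rangle_p$ and $D_h(\cdot,\cdot)_{p\circ v}$ notation throughout.
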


\begin{proof}
Let us write $h$--ESO for $x=x_t$, $q=x_{(t+1,*)}-x_t$ and sampling $\hat{S}$. We get

\begin{eqnarray}
\Exp{f(x_{t+1})\;|\;x_t} 
&\stackrel{\eqref{eq:eso_def}}{\leq} &
 f(x_{t}) + \langle \nabla f(x_t) ,x_{(t+1,*)}-x_t \rangle_p+D_{h}(x_{(t+1,*)},x_t)_{p\circ v}
 \nonumber
\\
&\stackrel{\eqref{eq:tpp_eso}}{\leq} &
 f(x_{t}) + \langle \nabla f(x_t) ,x-x_t \rangle_p+D_{h}(x,x_t)_{p\circ v}-D_{h}(x,x_{(t+1,*)})_{p\circ v}
  \nonumber
\\
&\stackrel{\eqref{eq:sc_vec_def}}{\leq}  &  
(1-p_0)f(x_{t}) +p_0f(x) -D_h(x,x_t)_{p\circ w}+D_{h}(x,x_t)_{p\circ v}-D_{h}(x,x_{(t+1,*)})_{p\circ v}
 \nonumber
\\
&\leq & 
(1-p_0)f(x_{t}) +p_0f(x) +(1-\Delta)D_h(x,x_t)_{p\circ v}-D_{h}(x,x_{(t+1,*)})_{p\circ v}.
\label{eq:nonfinished_eso}
\end{eqnarray}
In the last inequality above we used the definition of $\Delta$.
Since 
\[\Exp{D_h(x,x_{t+1})_v\;|\;x_t}
\oo{=}
(1-p_0)D_h(x,x_t)_v+p_0D_h(x,x_{(t+1,*)})_v,\] 
we have 
\[D_h(x,x_{(t+1,*)})_{p\circ v}
\oo{=}
\Exp{D_h(x,x_{t+1})_v\;|\;x_t} -(1-p_0)D_h(x,x_t)_v
.\]
 Plugging it back to \eqref{eq:nonfinished_eso} we obtain

\begin{eqnarray}
\Exp{f(x_{t+1})\;|\;x_t} 
&\leq &
(1-p_0)f(x_{t}) +p_0f(x) +(1-\Delta)D_h(x,x_t)_{p\circ v}-\Exp{D_h(x,x_{t+1})\;|\;x_t} 
\nonumber
\\
&& \qquad
+(1-p_0)D_h(x,x_t)_v
\nonumber
\\
\nonumber
&=&
(1-p_0)f(x_{t}) +p_0f(x) +(1-p_0\Delta)D_h(x,x_t)_{ v}-\Exp{D_h(x,x_{t+1})_v\;|\;x_t}.
\\
\label{eso_2}
\end{eqnarray}
Taking the expectation over the algorithm and using the tower property we obtain the desired result. 
\end{proof}

Now, we are ready to introduce first of the two main results of this work - Theorems~\ref{theorem_rcd_eso} and~\ref{th:rcd_eso_nonsc}, providing with a convergence rate of relRCD under ESO assumption.

\subsubsection{Strongly convex case $w\in \R^n_{++}$}

\begin{theorem}[Convergence rate for Algorithm~\algE ]\label{theorem_rcd_eso}
Suppose that $f$ is $w$--strongly convex relative to $h$ for $w\in \R^n_+$ and that $(f,\hat{S})\sim \mathrm{ESO}_h(v)$ for $p(\hat{S})=p=p_0\ones$. Denote $\Delta=\mathrm{min} \frac{w^{(i)}}{v^{(i)}}$. Then, iterates of Algorithm~\algE\ satisfy:
\begin{equation}
\sum_{t=1}^{k}c_t \big(\Exp{f(x_t)}-f(x_*)\big)
\oo{\leq} 
\frac
{(1-p_0\Delta)D_{h}(x_*,x_0)_v  +(1-p_0)(f(x_0)-f(x_*)) }
{1-\Delta^{-1} + \Delta^{-1} \Big ( \frac{1}{1-p_0\Delta} \Big )^{k-1} }, 
\label{eq:eso_main}
\end{equation}
where $c\in \R^k$ is a positive vector with entries summing up to 1.
On top of that, we have
\begin{equation}
\Exp{D_h(x_*,x_k)_v}
\oo{\leq}
 (1-p_0\Delta)^kD_h(x_*,x_0)_{ v},
\label{eq:eso_breg^{(i)}terates}
\end{equation}
and
\begin{equation}
\frac 1k\sum_{t=1}^{k}\Exp{D_{h}(x_{t},x_{(t+1,*)})_v}
\oo{\leq}
 \frac{f(x_0)-f(x_{*})}{kp_0}.
\label{eq:eso_norm_grad}
\end{equation}
\end{theorem}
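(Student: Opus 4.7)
All three inequalities will follow from Lemma~\ref{L: eso^{(i)}teration} specialised at two choices of $x$. Abbreviate $r_t\eqdef \Exp{f(x_t)}-f(x_*)$ and $D_t\eqdef \Exp{D_h(x_*,x_t)_v}$; the lemma with $x=x_*$ gives the compact one-step recursion $r_{t+1}+D_{t+1}\leq (1-p_0)r_t+(1-p_0\Delta)D_t$, while the lemma with $x=x_t$ (using $D_h(x_t,x_t)_v=0$) yields $\Exp{f(x_{t+1})}+\Exp{D_h(x_t,x_{t+1})_v}\leq \Exp{f(x_t)}$.

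For \eqref{eq:eso_main} the plan is to multiply the $t$-th copy of the first recursion by the geometrically growing weight $\alpha_t\eqdef(1-p_0\Delta)^{-t}$ and sum over $t=0,\dots,k-1$. This choice is engineered so that the $D_t$ terms telescope: on the right only $(1-p_0\Delta)\alpha_0 D_0$ survives, and on the left only $(1-p_0\Delta)\alpha_k D_k$ (which I drop, being $\geq 0$). The coefficient of $r_t$ that survives on the left is $p_0(1-\Delta)\alpha_t$ for $1\leq t\leq k-1$ and $\alpha_{k-1}$ for $t=k$, while on the right only $(1-p_0)\alpha_0 r_0$ remains. Normalising the $r_t$-coefficients by their total sum $S$ yields the coefficient vector $c$ of the theorem, and a short geometric-series computation identifies $S=1-\Delta^{-1}+\Delta^{-1}(1-p_0\Delta)^{1-k}$, matching exactly the denominator in the statement.

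For \eqref{eq:eso_breg^{(i)}terates} I would introduce the Lyapunov functional $V_t\eqdef r_t+D_t$ and observe that, under the implicit assumption $\Delta\leq 1$, one has $1-p_0\leq 1-p_0\Delta$. The first recursion then reads $V_{t+1}\leq (1-p_0)r_t+(1-p_0\Delta)D_t\leq (1-p_0\Delta)V_t$, which iterates to $V_k\leq (1-p_0\Delta)^k V_0$ and hence to $D_k\leq V_k$ with the claimed rate. For \eqref{eq:eso_norm_grad} I would telescope the second recursion over $t=0,\dots,k-1$ to obtain $\sum_t \Exp{D_h(x_t,x_{t+1})_v}\leq f(x_0)-f(x_*)$; separability of $h$ together with $\Prob(i\in\hat{S})=p_0$ gives the coordinate-wise identity $\Exp{D_h(x_t,x_{t+1})_v\mid x_t}=p_0\,D_h(x_t,x_{(t+1,*)})_v$, so dividing by $kp_0$ closes the argument.

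The main bookkeeping obstacle is the weighted telescoping for \eqref{eq:eso_main}: the weights $\alpha_t$ must be chosen precisely so that both the $D$-telescoping and the resulting geometric sum for the normalising factor work out to the exact closed form appearing in the theorem. The Lyapunov step for \eqref{eq:eso_breg^{(i)}terates} is short but relies crucially on the natural assumption $\Delta\leq 1$ (satisfied whenever the relative strong convexity constants $w^{(i)}$ are dominated by the ESO parameters $v^{(i)}$), and the proof of \eqref{eq:eso_norm_grad} is essentially mechanical once the separability-based identity linking $\Exp{D_h(x_t,x_{t+1})_v\mid x_t}$ to $D_h(x_t,x_{(t+1,*)})_v$ is identified.
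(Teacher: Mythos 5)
Your derivations of \eqref{eq:eso_main} and \eqref{eq:eso_norm_grad} are correct and match the paper's. For \eqref{eq:eso_main}, the geometric weight $\alpha_t=(1-p_0\Delta)^{-t}$ is precisely the multiplier $\bigl(\varphi/(\varphi-\delta\psi)\bigr)^t$ used in the proof of Lemma~\ref{l:rate_from_iter} with $\varphi=1$, $\delta=p_0$, $\psi=\Delta$; your telescoping and normaliser computation simply inline that lemma, and the paper invokes it directly. For \eqref{eq:eso_norm_grad}, both you and the paper set $x=x_t$ in \eqref{eso_2}, telescope, and use the separability-based identity $\Exp{D_h(x_t,x_{t+1})_v\mid x_t}=p_0\,D_h(x_t,x_{(t+1,*)})_v$.

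For \eqref{eq:eso_breg^{(i)}terates} there is a genuine gap, and your argument also diverges from the paper's. Your Lyapunov recursion gives $V_k\leq(1-p_0\Delta)^kV_0=(1-p_0\Delta)^k(D_0+r_0)$, hence only $D_k\leq(1-p_0\Delta)^k\bigl(D_0+f(x_0)-f(x_*)\bigr)$. That carries the extra nonnegative additive term $r_0=f(x_0)-f(x_*)$, so it is strictly weaker than the stated $D_k\leq(1-p_0\Delta)^kD_0$. Writing that $D_k\leq V_k$ ``with the claimed rate'' conflates the geometric factor (which you do recover) with the constant (which you do not). To close the gap you must prove the one-step contraction $\Exp{D_h(x_*,x_{t+1})_v}\leq(1-p_0\Delta)\Exp{D_h(x_*,x_t)_v}$ outright; rearranging Lemma~\ref{L: eso^{(i)}teration} shows this is equivalent to the lower bound $\Exp{f(x_{t+1})}\geq(1-p_0)\Exp{f(x_t)}+p_0 f(x_*)$, i.e., that the expected per-step decrease in objective is at most $p_0\bigl(\Exp{f(x_t)}-f(x_*)\bigr)$. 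The paper takes this route, citing monotonicity of $\Exp{f(x_t)}$ — but note that monotonicity gives $\Exp{f(x_{t+1})}\leq\Exp{f(x_t)}$, which runs in the wrong direction for the needed lower bound, so that one-line justification is itself not airtight. Your Lyapunov trick side-steps the need for that lower bound precisely by sacrificing the constant; as written it does not establish \eqref{eq:eso_breg^{(i)}terates}, and you should either prove the per-step contraction on $D_t$ directly or explicitly weaken the conclusion to $D_k\leq(1-p_0\Delta)^k(D_0+r_0)$.
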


\begin{proof}
The proof of \eqref{eq:eso_main} follows by applying Lemma~\ref{l:rate_from_iter} together with Lemma~\ref{L: eso^{(i)}teration} for 
$f_t=\Exp{f(x_t)},\,D_t=\Exp{D_h(x_*,x_t)_v},\,f_*= f(x_*),\, \delta=p_0,\,\ \varphi=1,\, \psi=\Delta$.

Inequality \eqref{eq:eso_breg^{(i)}terates} follows recursively from  
\begin{equation*}
\Exp{D_h(x_*,x_{t+1})_v}  \oo{\leq} 
(1-p_0\Delta)\Exp{D_h(x_*,x_t)_{ v}},
\end{equation*}
which holds due to Lemma~\ref{L: eso^{(i)}teration} as $\Exp{f(x_t)}$ is an nonincreasing sequence.   

Finally, to prove inequality \eqref{eq:eso_norm_grad} let us set $x=x_t$ in \eqref{eso_2} to obtain 
\[
\Exp{f(x_{t+1})\;|\;x_t} 
\oo{\leq} 
f(x_{t})-\Exp{D_h(x_t,x_{t+1})_v\;|\;x_t}
.\]
Taking the full expectation, averaging over iterations and using $\Exp{f(x_k)}\leq f(x_*)$ we get
\[
\frac 1k\sum_{t=1}^{k}\Exp{\Exp{D_{h}(x_{t},x_{t+1})_v\;|\;x_t}}
\oo{\leq}
 \frac{f(x_0)-f(x_{*})}{k} 
.\]
It remains to notice that $\Exp{D(x_t,x_{t+1})\;|\;x_t}=p_0D(x_t,x_{(t+1,*)})$.
\end{proof}

Convergence rates from \eqref{eq:eso_main} and \eqref{eq:eso_breg^{(i)}terates} are both asymptotically driven by the term 
\[
(1-p_0\Delta)^k 
\oo{=}
\left(1-p_0\min_i \frac{v^{(i)}}{w^{(i)}}\right)^k.
\]

Therefore, no speedup is obtained comparing to relGD (Theorem~\ref{t:primal_scheme}), for ESO parameters set as $v=L\ones$ and strong convexity parameters set as $w=\mu\ones$. However, if one set ESO parameters $v$ more tightly, taking into the consideration the specific probability sampling, one can outperform Algorithm~\algD\,. There is a broad theory about how to compute ESO parameters $v$ for various different sampling strategies in case of $h(x)=\frac12\|x\|^2$, see \cite{ESO}. We gave the example of one class of functions in Lemma~\ref{l:eso_example}. 

Note also that \eqref{eq:eso_main} provides an asymptotically same convergence result as Randomized Coordinate Descent in the standard smooth settingfor uniform sampling \cite{NSync}, therefore we obtained a good generalization in this case.

To conclude this section, notice that \eqref{eq:eso_norm_grad} provides us with a convergence of $\Exp{D_h(x_t,x_{t+1})_v}$. Quantity $D_{h}(x_{t},x_{t+1})_v$ depends on $x_t$, $h$ and $f$ and goes to 0 when $\nabla f(x_t)$ goes to 0 (this can be easily seen from \eqref{eq:t+1_star}). Thus $D_{h}(x_{t},x_{t+1})_v$ can be considered as a ``norm'' of $\nabla f(x_t)$ which depends on $x_t$ and $h$. In the standard setting when $h(x)=\| x\|^2/2$ and $v=L\ones$ we have
\[
D_{h}(x_{t},x_{t+1})_v
\oo{=}
LD_{h}(x_{t},x_{t+1})
\oo{=}
L\left \|\frac{1}{L}\nabla f(x_k)\right\|^2
\oo{=}
\frac1L\|\nabla f(x_k)\|^2,
\]
and thus we obtain the convergence of the norm of gradient in this case.

\begin{remark}
According to Theorem~\ref{theorem_rcd_eso}, one needs 
\begin{equation}
\frac{\Delta}{p_0} \log(\Delta)\log\left( \frac{(1-p_0\Delta)D_{h}(x_*,x_0)_v  +(1-p_0)(f(x_0)-f(x_*)) }{\epsilon}+\Delta^{-1}-1 \right)
\label{eq:rcd_eso_epsilon_comp}
\end{equation}
iterations for Algorithm~\algE\ to converge to $\epsilon$-optimality in functional values and 
\begin{equation}
\frac{\Delta}{p_0}\log\left(\frac{D_{h}(x_*,x_0)_v }{\epsilon} \right)
\label{eq:rcd_eso_epsilon_comp_dist}
\end{equation}
iterations to get to $\epsilon$-neighborhood to the optimum in (Bregman) distance. For a comparison, randomized coordinate descent in standard smooth setting requires 
\[
\frac{\Delta}{p_0}\log\left(\frac{f(x_0)-f(x_*)}{\epsilon} \right)
\]
iterations to reach to $\epsilon$-optimality, which is essentially same as both~\eqref{eq:rcd_eso_epsilon_comp} and~\eqref{eq:rcd_eso_epsilon_comp_dist}.
\end{remark}

\subsubsection{Non-strongly convex case: $\min w_{i}=0$}

The following theorem provides us with the convergence rate of Algorithm~\algE\, when $f$ is convex but not necessarily relative strongly convex (i.e., $\min w_i=0$).

\begin{theorem}[Convergence rate for Algorithm~\algE ]
Suppose that $f$ is convex and $(f,\hat{S})\sim \mathrm{ESO}_h(v)$ for $p(\hat{S})=p=p_0\ones$ and separable convex function $h$. Running Algorithm~\algE\ for $k$ iterations we obtain:
\begin{equation*}
\sum_{t=1}^{k}c_t(\Exp{f(x_t)}-f(x_*)) 
\oo{\leq} 
\frac{D_{h}(x,x_0)_v+(1-p_0)\left(f(x_0)-f(x_*)\right)}{1+p_0(k-1)},
\end{equation*}
where $c=(c_1,\dots,c_k)\in \R^k$ is a positive vector proportional to 
$
\left(p_0,\,p_0\,\dots,\,p_0,\,1 \right)
$.
\label{th:rcd_eso_nonsc}
\end{theorem}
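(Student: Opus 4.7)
The plan is to mirror the approach used for Theorem~\ref{theorem_rcd_nsc_mb} but in the ESO framework, specializing Lemma~\ref{L: eso^{(i)}teration} to the degenerate case of the strong-convexity parameter. Concretely, convexity of $f$ corresponds to the choice $w = 0$ in the separable relative strong convexity inequality \eqref{eq:sc_vec_def}, so $\Delta = \min_i w^{(i)}/v^{(i)} = 0$. Applying Lemma~\ref{L: eso^{(i)}teration} with $x = x_*$ in that case yields
\[
\Exp{f(x_{t+1})} \;\leq\; (1-p_0)\Exp{f(x_{t})} + p_0 f(x_*) + \Exp{D_h(x_*,x_t)_v} - \Exp{D_h(x_*,x_{t+1})_v}.
\]

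Next, I would introduce the shorthand $r_t \eqdef \Exp{f(x_t)} - f(x_*)$ and $D_t \eqdef \Exp{D_h(x_*,x_t)_v}$ so the inequality becomes $r_{t+1} \leq (1-p_0) r_t + D_t - D_{t+1}$. Rearranging this as $r_{t+1} + p_0 r_t \leq r_t + D_t - D_{t+1}$ sets up a clean telescoping: summing over $t = 0, 1, \dots, k-1$, the copies of $\sum_{t=1}^{k-1} r_t$ appearing on each side cancel, the Bregman distance terms collapse to $D_0 - D_k$, and dropping the non-negative quantity $D_k$ from the left gives
\[
r_k + p_0 \sum_{t=1}^{k-1} r_t \;\leq\; (1-p_0) r_0 + D_0.
\]
Dividing through by $1 + p_0(k-1)$ is exactly the claimed bound, with the coefficient vector $c$ proportional to $(p_0, \dots, p_0, 1)$ as stated. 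Equivalently, one can just invoke Lemma~\ref{l:rate_from_iter} with the parameter choice $f_t = \Exp{f(x_t)}$, $D_t = \Exp{D_h(x_*,x_t)_v}$, $\varphi = 1$, $\psi = 0$, $\delta = p_0$ (so $\Delta = 0$) and specialize the resulting recursion identity, as done in the proof of Theorem~\ref{theorem_rcd_nsc_mb}.

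There is no real analytic obstacle here; the only thing to watch is bookkeeping in the telescoping step, ensuring that the $(1-p_0)r_0$ factor and the unit weight on $r_k$ come out correctly so the resulting coefficient vector matches $(p_0, \dots, p_0, 1)$. All the substantive work —- exploiting $h$--ESO together with the three-point property in the ESO form \eqref{eq:tpp_eso} — has already been absorbed into Lemma~\ref{L: eso^{(i)}teration}, so the remainder is essentially a one-line telescoping argument.
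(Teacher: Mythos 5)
Your proposal is correct and matches the paper's argument: both use Lemma~\ref{L: eso^{(i)}teration} with $x = x_*$ and $\Delta = 0$ and then telescope the resulting per-iteration inequality to reach $r_k + p_0\sum_{t=1}^{k-1}r_t \leq (1-p_0)r_0 + D_h(x_*,x_0)_v$, dividing by $1 + p_0(k-1)$. The only cosmetic difference is that you telescope the recursion directly, while the paper obtains the same telescoped identity by specializing the intermediate equation~\eqref{RCD_tel} from the proof of Lemma~\ref{l:rate_from_iter} to $\psi = 0$ (since the lemma as stated assumes $\psi > 0$, both you and the paper are right to bypass its conclusion and just reuse the telescoping step).
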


\begin{proof}
For simplicity, denote $r_t=\Exp{f(x_t)}-f(x_*)$.
We can follow the proof of Theorem~\ref{theorem_rcd_mb} using Lemma~\ref{l:rate_from_iter} to get the equation \eqref{RCD_tel}, which can be rewritten for $\mu=0$ as follows:
\begin{equation*}
D_{h}(x,x_0)_v
\oo{\geq} 
r_{k}+
p_0\sum_{t=1}^{k-1}  r_t -(1-p_0)r_0,
\end{equation*}
which can be easily rearranged as

\begin{equation*}
\frac{D_{h}(x,x_0)_v+(1-p_0)r_0}{1+(k-1)p_0}
\oo{\geq}  
\frac{1}{1+(k-1)p_0}\left(r_{k}+
p_0\sum_{t=1}^{k-1}  r_t \right).
\end{equation*}
\end{proof}

As previously, Theorem~\ref{theorem_rcd_nsc_mb} captures known results of Relative Gradient Descent for $p_0=1$ (Theorem~\ref{t:primal_scheme}).

 \section{Relative Stochastic Gradient Descent\label{S:sgd}}

In this section, we assume that every iteration we have an access to the stochastic oracle providing us $ \widetilde{g}_t$ -- an unbiased estimator of $\nabla f(x_t)$. The next iterate of the algorithm is obtained using the stochastic gradient instead of the true gradient. The analogous algorithm in the standard smooth setting is Stochastic Gradient Descent which is in fact a special case of Relative Stochastic Gradient Descent. 

\subsection{Algorithm}

The iterates of standard stochastic gradient descent with stepsize sequence $\{\gamma_t \}_{t=0}^\infty$ are the following
\begin{equation}
x_{t+1}\oo{\leftarrow} x_t-\gamma_t \widetilde{g_t}.
\label{eq:sgd_standard}
\end{equation}

It is known that unlike gradient descent, scheme \eqref{eq:sgd_standard} does not necessarily guarantee the convergence to the optimum, as the variance of gradient estimator $\widetilde{g_t}$ might not converge to zero, resulting in the convergence to the neighborhood of the optimum. This is where the importance of decreasing stepsize sequence $\{\gamma_t \}_{t=0}^\infty$  takes a place; thus taking more conservative steps as progressing with the algorithm. However, in particular special cases, such as empirical risk minimization, a different tricks tricks guarantee vanishing variance of gradient estimator as one approach optimum \cite{sag, SAGA, svrg, SDCA, sarah}.

In this work, we attain the convergence of Relative Stochastic Gradient Descent by making the algorithm conservative over time. We leave the variance reduction for relatively smooth ERM problems as an open research question.

\begin{algorithm}[H]
\textbf{Input: }{Initial iterate $x_0$, separable reference function $h$, positive scalar $L$ such that  $f$ is $L$-relative smooth with respect to $h$, stepsize determining sequence $\{L_t \}_{t=0}^{\infty}$ with $L_0=L$.}\\
\For {$t= 0,1,\dots, k-1$} {
	 \begin{enumerate}
\item Get   $\widetilde{g}_t$ such that $\Exp{\widetilde{g}_t}=\nabla f(x_t)$
\item Set $x_{t+1}\leftarrow \argmin_{x\in Q} \left\{\langle \widetilde{g}_t,x\rangle+L_tD_h(x,x_t)\right\}$ 
\end{enumerate}
 }
\textbf{return} $x_k$
\caption{relSGD (Relative Stochastic Gradient Descent)}
\end{algorithm}

Recall that for the special choice $D_h(x,y)=\frac12\Vert x-y\Vert ^2$, Stochastic Gradient Descent with nonincreasing stepsize $\gamma_t=\frac1{L_t}$ is recovered. Define the new iterate using the true gradient as
\[
x_{(t+1,*)}
\oo{\eqdef}
\argmin_{x\in Q}\left\{\langle\nabla f(x_t),x\rangle+L_tD_h(x,x_t)\right\},
\]
which will be used only in Assumption~\ref{a:sgd_variance}, and will never be evaluated in the actual run of the algorithm. 

Throughout this section, we will make the following assumption,  which is in fact closely related to boundedness of variance of the gradient estimator, as Remark~\ref{rem:bound_var} shows. Notice that boundedness of variance of gradient estimator is very common in SGD literature. 

\begin{assumption}
\label{a:sgd_variance}
There exist  $\sigma\neq 0$ such that for all $t$ we have

\begin{equation}
L_t\Exp{\left\langle \nabla f(x_t)- \widetilde{g}_t, x_{t+1} -x_{(t+1,*)}\right \rangle \;|\;x_t}\oo{\leq} \sigma^2.
\label{eq:sgd_variance_assump}
\end{equation}

\end{assumption}

\begin{remark}
\label{rem:bound_var} Consider Assumption~\ref{a:sgd_variance}. 
If we additionally assume that $h$ is $\mu_h$-strongly convex function, we obtain

\begin{eqnarray*}
L_t\Exp{\langle \nabla f(x_t)- \widetilde{g}_t, x_{t+1} -x_{(t+1,*)}\rangle\;|\;x_t}
&\leq&
L_t\Exp{\big\Vert  \nabla f(x_t)- \widetilde{g}_t\big\Vert  \cdot \big\Vert  x_{t+1} -x_{(t+1,*)}\big\Vert \;\Big|\;x_t}
\\
&\stackrel{(*)}{\leq}&
L_t\Exp{\big\Vert  \nabla f(x_t)- \widetilde{g}_t\big\Vert  \cdot \frac{1}{\mu_h} \left \Vert  \frac1{L_t} ( \widetilde{g}_t-\nabla f(x_t)) \right \Vert \;\bigg|\;x_t}
\\
&=&
\frac{1}{\mu_h}
\Exp{\big\Vert  \nabla f(x_t)- \widetilde{g}_t\big\Vert ^2 \;\Big|\;x_t}.
\end{eqnarray*}

Inequality $(*)$ holds due to $\mu_h$--strong convexity of $h$, since 
$$
\Vert  x_{t+1}-x_{(t+1,*)}\Vert 
\oo{\leq}
\frac{1}{\mu_h}
\Vert \nabla h(x_{t+1})-\nabla h(x_{(t+1,*)})\Vert 
\oo{=}
\frac{1}{\mu_h}
\left \Vert  \frac{1}{L_t} ( \widetilde{g}_t-\nabla f(x_t))\right\Vert. 
$$ 

Thus, if $h$ is $\mu_h$--strongly convex, $\sigma^2$ can be chosen so that $\sigma^2 \mu_h$ correspond to the global upper bound on variance of gradient estimator. 
\end{remark}

\subsection{Key Lemma}

The following lemma is key for this section and provides us with a bound on expected suboptimality in iteration $t$.

\begin{lemma}[Iteration decrease for Algorithm \algS]
Suppose that $f$ is $L$--smooth and $\mu$--strongly convex relative  to function $h$. Performing one iteration of Algorithm~\algS\ we obtain for all $x\in Q$
\begin{eqnarray}
\Exp{f(x_{t+1})\;|\;x_t}-f(x)
&\leq& 
(L_t-\mu)D_h(x,x_t)- L_t\Exp{D_h(x,x_{t+1})\;|\;x_t}
\nonumber
\\
&& \qquad \qquad +\frac{\sigma^2}{L_t}-(L_t-L)\Exp{D_h(x_{t+1},x_t) \;|\;x_t}.
\label{Eq:sgd_dec^{(i)}ter_general}
\end{eqnarray}

\label{L:sgd^{(i)}ter_decrease_general}
\end{lemma}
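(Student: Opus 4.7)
The plan is to combine three standard ingredients: relative smoothness at $(x_{t+1}, x_t)$, the three-point property on the subproblem that defines $x_{t+1}$, and relative strong convexity, then handle the stochastic residual via Assumption~\ref{a:sgd_variance}. First I would start from
\[
f(x_{t+1}) \leq f(x_t) + \langle \nabla f(x_t), x_{t+1} - x_t \rangle + L\, D_h(x_{t+1}, x_t),
\]
and split the gradient as $\nabla f(x_t) = \widetilde{g}_t + (\nabla f(x_t) - \widetilde{g}_t)$ to isolate the noise contribution.

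Next, I would apply Lemma~\ref{L:tpp} to the update $x_{t+1} = \argmin_{x \in Q}\{\langle \widetilde{g}_t/L_t, x\rangle + D_h(x, x_t)\}$ (scaling back by $L_t$), which gives, for any $x \in Q$,
\[
\langle \widetilde{g}_t, x_{t+1} - x_t \rangle \leq \langle \widetilde{g}_t, x - x_t \rangle + L_t D_h(x, x_t) - L_t D_h(x_{t+1}, x_t) - L_t D_h(x, x_{t+1}).
\]
Substituting this into the smoothness bound produces a $-(L_t - L)\, D_h(x_{t+1}, x_t)$ term (which matches the last summand in the claim), a $L_t D_h(x, x_t)$ term, and a $-L_t D_h(x, x_{t+1})$ term, plus the leftover stochastic inner product $\langle \nabla f(x_t) - \widetilde{g}_t, x_{t+1} - x_t\rangle$ and $\langle \widetilde{g}_t, x - x_t \rangle$.

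I would then take conditional expectation given $x_t$. Unbiasedness $\Exp{\widetilde{g}_t \mid x_t} = \nabla f(x_t)$ turns $\langle \widetilde{g}_t, x - x_t\rangle$ into $\langle \nabla f(x_t), x - x_t\rangle$, which combines with $f(x_t)$ and is bounded by $f(x) - \mu D_h(x, x_t)$ using relative strong convexity~\eqref{eq:relsc}. Collecting terms yields exactly the right-hand side of \eqref{Eq:sgd_dec^{(i)}ter_general}, except for the remaining expected noise term $\Exp{\langle \nabla f(x_t) - \widetilde{g}_t,\, x_{t+1} - x_t \rangle \mid x_t}$.

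The main (and only subtle) obstacle is bounding this residual by $\sigma^2/L_t$, which is where Assumption~\ref{a:sgd_variance} enters. The trick is to insert the auxiliary point $x_{(t+1,*)}$, which is deterministic given $x_t$ because it depends only on $\nabla f(x_t)$ and not on $\widetilde{g}_t$. By unbiasedness, $\Exp{\langle \nabla f(x_t) - \widetilde{g}_t,\, x_{(t+1,*)} - x_t\rangle \mid x_t} = 0$, so
\[
\Exp{\langle \nabla f(x_t) - \widetilde{g}_t,\, x_{t+1} - x_t \rangle \mid x_t} = \Exp{\langle \nabla f(x_t) - \widetilde{g}_t,\, x_{t+1} - x_{(t+1,*)}\rangle \mid x_t} \leq \frac{\sigma^2}{L_t},
\]
the last inequality being \eqref{eq:sgd_variance_assump} divided by $L_t$. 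Substituting this bound finishes the proof.
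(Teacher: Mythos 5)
Your proposal is correct and mirrors the paper's own proof step for step: relative smoothness at $(x_{t+1},x_t)$, absorbing $(L_t-L)D_h(x_{t+1},x_t)$, the three-point property on the subproblem, unbiasedness plus relative strong convexity, and the shift from $x_t$ to $x_{(t+1,*)}$ inside the noise inner product so that Assumption~\ref{a:sgd_variance} applies. No gaps.
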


\begin{proof}

\begin{eqnarray*}
\Exp{f(x_{t+1})\;|\;x_t}
&\stackrel{\eqref{eq:relsmooth}}{\leq}&
f(x_{t})+ \Exp{\langle\nabla f(x_t), x_{t+1}-x_t \rangle +LD_h(x_{t+1},x_t)\;|\;x_t}
\\
&=&
 f(x_{t})+ \Exp{\langle\nabla f(x_t), x_{t+1}-x_t \rangle +L_tD_h(x_{t+1},x_t) }
 \\
 & & \qquad -(L_t-L)\Exp{D_h(x_{t+1},x_t)\;|\;x_t}
 \\
&=&
 f(x_{t})+ \Exp{\langle\nabla f(x_t), x_{t+1}-x_t \rangle +L_tD_h(x_{t+1},x_t)\;|\;x_t}
 \\
 && \qquad-(L_t-L)\Exp{D_h(x_{t+1},x_t)\;|\;x_t}
 \\
&=&
f(x_{t})+ \Exp{\langle \widetilde{g}_t, x_{t+1}-x_t \rangle +L_tD_h(x_{t+1},x_t)\;|\;x_t}
\\
&&\qquad
+\Exp{\langle\nabla f(x_t), x_{t+1}-x_t \rangle-\langle \widetilde{g}_t, x_{t+1}-x_t \rangle\;|\;x_t}
\\ && \qquad
-(L_t-L)\Exp{D_h(x_{t+1},x_t)\;|\;x_t}
\\
&\stackrel{\eqref{eq:tpp}}{\leq}&
 f(x_{t})+ \Exp{\langle \widetilde{g}_t, x-x_t \rangle +L_tD_h(x,x_t)-L_tD_h(x,x_{t+1})\;|\;x_t}
 \\
 &&
\qquad 
 +\Exp{\langle \nabla f(x_t)- \widetilde{g}_t, x_{t+1}-x_t \rangle\;|\;x_t}
 -(L_t-L)\Exp{D_h(x_{t+1},x_t)\;|\;x_t}
 \\
 &=&
 f(x_{t})+ \langle \nabla f(x_t), x-x_t \rangle +L_tD_h(x,x_t)-L_t\Exp{D_h(x,x_{t+1})\;|\;x_t}
 \\
& & \qquad +
\Exp{\langle \nabla f(x_t)- \widetilde{g}_t, x_{t+1}-x_t \rangle\;|\;x_t}
-(L_t-L)\Exp{D_h(x_{t+1},x_t)\;|\;x_t}
 \\
&\stackrel{\eqref{eq:relsc}}{\leq}&
f(x)+(L_t-\mu)D_h(x,x_t)- L_t\Exp{D_h(x,x_{t+1})\;|\;x_t}
\\
&& \qquad +
\Exp{\langle \nabla f(x_t)- \widetilde{g}_t, x_{t+1} -x_t \rangle\;|\;x_t}
-(L_t-L)\Exp{D_h(x_{t+1},x_t)\;|\;x_t}
 \\
&\stackrel{(*)}{=}&
f(x)+(L_t-\mu)D_h(x,x_t)- L_t\Exp{D_h(x,x_{t+1})\;|\;x_t}
\\
&&\qquad +
\Exp{\langle \nabla f(x_t)- \widetilde{g}_t, x_{t+1} -x_{(t+1,*)} \rangle\;|\;x_t}
-(L_t-L)\Exp{D_h(x_{t+1},x_t)\;|\;x_t}
\\
&\stackrel{\eqref{eq:sgd_variance_assump}}{\leq}&
f(x)+(L_t-\mu)D_h(x,x_t)- L_t\Exp{D_h(x,x_{t+1})\;|\;x_t}
+\frac{\sigma^2}{L_t}
\\
&&\qquad -
(L_t-L)\Exp{D_h(x_{t+1},x_t)\;|\;x_t}.
\end{eqnarray*}
Equality (*) follows from fact that $\widetilde{g}_t$ is unbiased and thus we have
\[
\Exp{\langle \nabla f(x_t)- \widetilde{g}_t, x_t \rangle\;|\;x_t}
\oo{=}
\Exp{\langle \nabla f(x_t)- \widetilde{g}_t, x_{(t+1,*)} \rangle\;|\;x_t}
\oo{=}
0.\] 
\end{proof}

Note that Lemma~\ref{L:sgd^{(i)}ter_decrease_general} is very similar to Lemma~\ref{iter_decrease_mb} for $\mb=n$. There are only two additional terms in \eqref{Eq:sgd_dec^{(i)}ter_general} -- $\tfrac{\sigma^2}{L_t}$ appears due to the noise in the gradient estimator and  
$(L_t-L)\Exp{D_h(x_{t+1},x_t)|x_t}$ appears due to the varying stepsize rule. We now derive the convergence rate of relSGD for various stepsize rules.

\subsection{Constant stepsize rule}

The following theorem provides a convergence result of SGD with constant stepsize rule using recursively Lemma~\ref{L:sgd^{(i)}ter_decrease_general} -- it shows that Relative Stochastic Gradient Descent converges linearly to a particular neighborhood of the optimum. We mention it for completeness, to illustrate that relSGD in our fully general relative smooth setting behaves very similar to standard (smooth) SGD.

\begin{theorem}[Constant stepsize rule for Algorithm~\algS]
Suppose that $f$ is $L$--smooth and $\mu$--strongly convex relative to $h$. Iterates of Algorithm~\algS\ with stepsize rule $L_t=L$ satisfy:
\begin{equation}
\sum_{t=1}^{k}c_t \left (\Exp{f\left (x_t\right )}-\left (f\left (x_*\right )+\frac{\sigma^2}{L}\right )\right )
\oo{\leq} 
\frac{ D_{h}\left (x_*,x_0\right ) \mu }{ \left ( \frac{L}{L-\mu }  \right )^{k}-1}, 
\label{eq:sgd_const}
\end{equation}
where $c$ is positive vector proportional to 
$\left (1,\, \beta,\, \beta^2,\dots,\, \beta^{k-1}\right )$
 summing up to 1 for 
 $$\beta\oo{\eqdef}\frac{L}{L-\frac{\mu}{m}}.$$
\end{theorem}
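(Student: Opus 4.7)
\medskip
\noindent\textbf{Proof proposal.} My plan is to treat this as essentially a corollary of the general iteration decrease lemma (Lemma~\ref{L:sgd_iter_decrease_general}), mirroring the argument used for the relRCD theorems in Section~\ref{S:rcd}. First, I plug the constant choice $L_t=L$ into inequality~\eqref{Eq:sgd_dec_iter_general}: the term $(L_t-L)\Exp{D_h(x_{t+1},x_t)\mid x_t}$ vanishes identically, and setting $x=x_*$ followed by taking the full expectation (via the tower property) yields the recursion
\begin{equation*}
\Exp{f(x_{t+1})}-f(x_*)-\frac{\sigma^2}{L} \;\leq\; (L-\mu)\,\Exp{D_h(x_*,x_t)} - L\,\Exp{D_h(x_*,x_{t+1})}.
\end{equation*}

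\noindent Next, I would observe that this is structurally the same inequality that drives the proof of Theorem~\ref{theorem_rcd_mb}, except that the ``effective optimum value'' is shifted by the noise floor $\sigma^2/L$, and the parameters become $\delta=1$, $\varphi=L$, $\psi=\mu$. Hence one can invoke Lemma~\ref{l:rate_from_iter} directly, substituting $f_t=\Exp{f(x_t)}$, $D_t=\Exp{D_h(x_*,x_t)}$ and $f_*\leftarrow f(x_*)+\sigma^2/L$, which delivers exactly~\eqref{eq:sgd_const}. For readers who would like to see the telescoping explicitly, the self-contained approach is to multiply both sides of the recursion by $\beta^{t+1}$ where $\beta = L/(L-\mu)$, so that the coefficient of $\Exp{D_h(x_*,x_t)}$ becomes $\beta^{t+1}(L-\mu) = \beta^t L$, yielding a clean telescoping sum.

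\noindent Summing from $t=0$ to $t=k-1$ and discarding the nonnegative term $\beta^k L\,\Exp{D_h(x_*,x_k)}$ leaves
\begin{equation*}
\sum_{t=1}^{k} \beta^{t}\bigl(\Exp{f(x_t)}-f(x_*)-\tfrac{\sigma^2}{L}\bigr) \;\leq\; L\, D_h(x_*,x_0).
\end{equation*}
Dividing by the geometric sum $\sum_{t=1}^k \beta^t = \beta\frac{\beta^k-1}{\beta-1} = \frac{L(\beta^k-1)}{\mu}$ (using $\beta-1=\mu/(L-\mu)$) then produces the weights $c_t \propto (\beta,\beta^2,\ldots,\beta^k)$, i.e., proportional to $(1,\beta,\ldots,\beta^{k-1})$, and the right-hand side collapses to $\mu D_h(x_*,x_0)/(\beta^k-1)$, matching the statement.

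\noindent I do not expect any real obstacles here: the cancellation of the $(L_t-L)$ term is the reason constant stepsize is the easiest case, and the additive $\sigma^2/L$ simply propagates through the telescoping as a constant shift. The only delicate point is verifying that the geometric weights in the theorem statement match those produced by the telescoping, which is a direct algebraic identification through the factor $\beta=L/(L-\mu)$ and the closed form for the partial geometric series.
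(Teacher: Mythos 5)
Your proposal is correct and follows essentially the same route as the paper: set $x=x_*$ in the iteration-decrease lemma with $L_t=L$ (so the $(L_t-L)$ term vanishes), take the full expectation, and invoke Lemma~\ref{l:rate_from_iter} with $\delta=1$, $\varphi=L$, $\psi=\mu$ and $f_*\leftarrow f(x_*)+\sigma^2/L$. The explicit telescoping you spell out is exactly what the lemma encodes internally (with $\delta=1$ the factor $\tfrac{\varphi-\psi}{\delta^{-1}\varphi-\psi}$ equals $1$, so $C_t=\beta^{t-1}$ uniformly and $\sum_t C_t = (L/\mu)(\beta^k-1)/\beta$, reproducing your closed form), so the two derivations agree term for term; you have also, correctly, worked with $\beta=L/(L-\mu)$ rather than the paper's evidently mistyped $\beta=L/(L-\mu/m)$, which the telescoping confirms is the right constant.
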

 
\begin{proof}

Let us set $x=x_*$ in \eqref{Eq:sgd_dec^{(i)}ter_general}, take the expectation of over the algorithm and use the tower property. We obtain
\begin{equation*}
\Exp{f(x_{t+1})}-\left( f(x_*)+\frac{\sigma ^2}{L} \right )
\oo{\leq}  
(L-\mu)\Exp{D_h(x_*,x_t)}- L\Exp{D_h(x_*,x_{t+1})}.
\end{equation*}
The proof now follows directly by applying Lemma~\ref{l:rate_from_iter} the inequality above for
$f_t=\Exp{f(x_t)},\,D_t=\Exp{D_h(x_*,x_t)_v},\, f_*= f(x_*)+\tfrac{\sigma^2}{L},\, \delta=1,\,\ \varphi=L,\, \psi=\mu$.

\end{proof}

Inequality \eqref{eq:sgd_const} shows that the sequence of iterates $\{x_t\}$ converges linearly to the set $\{x:f(x)\leq f(x_*)+\frac{\sigma^2}{L}\}$, and the convergence rate is driven by the term
$
\left(1-\frac{\mu}{L}\right)^k.
$

\subsection{Decreasing stepsize rule}
The following theorem is one of two key results of this work, together with Theorem~\ref{theorem_rcd_eso}. It provides us with a convergence result of Algorithm~\algS\ for a general stepsize rule.

\begin{theorem}[General convergence for Algorithm~\algS]
Suppose that $f$ is $L$--smooth and $\mu$--strongly convex relative to $h$. Define $c_0=1$ and $c_t=\frac{L_{t-1}}{L_{t}-\mu}c_{t-1}$ for $t\geq 1$ and $C_k=\sum_{t=1}^k c_{t-1}$. Then, Algorithm~\algS\ satisfies:
\begin{equation}
\sum_{t=1}^k \frac{c_{t-1}}{C_k} \Exp{f(x_t)-f(x_*)}\oo{\leq} 
\frac{(L-\mu)D_h(x_*,x_0)}{C_k}+ \sigma^2\sum_{t=0}^{k-1}  \frac{c_t}{C_kL_t} .
\label{sgd_dec_Cbound}
\end{equation}
\label{t: SGD_Ctheorem}
\end{theorem}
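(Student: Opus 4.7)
The plan is to start from Lemma~\ref{L:sgd^{(i)}ter_decrease_general} applied at $x = x_*$, take full expectation (using the tower property), and write the resulting one-step inequality in a form where the Bregman distance terms have coefficients $(L_t - \mu)$ on $D_h(x_*, x_t)$ and $L_t$ on $D_h(x_*, x_{t+1})$. Concretely, we have, for every $t \geq 0$,
\[
\Exp{f(x_{t+1})} - f(x_*) \;\leq\; (L_t - \mu)\Exp{D_h(x_*, x_t)} - L_t \Exp{D_h(x_*, x_{t+1})} + \frac{\sigma^2}{L_t} - (L_t - L)\Exp{D_h(x_{t+1}, x_t)}.
\]
The recursion $c_t = \frac{L_{t-1}}{L_t - \mu} c_{t-1}$ is engineered precisely so that $c_t(L_t - \mu) = c_{t-1} L_{t-1}$. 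Thus, after multiplying the above inequality by $c_t$ and summing $t = 0, 1, \dots, k-1$, the Bregman-distance terms should telescope.

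In the summation, the Bregman part becomes
\[
\sum_{t=0}^{k-1}\bigl[c_t(L_t - \mu)\Exp{D_h(x_*, x_t)} - c_t L_t \Exp{D_h(x_*, x_{t+1})}\bigr] \;=\; c_0(L_0 - \mu) D_h(x_*, x_0) - c_{k-1} L_{k-1} \Exp{D_h(x_*, x_k)},
\]
since the identity $c_t(L_t - \mu) = c_{t-1} L_{t-1}$ kills all interior coefficients. With $c_0 = 1$ and $L_0 = L$, the first surviving term is $(L - \mu) D_h(x_*, x_0)$; the second is nonpositive when subtracted from the bound, so we drop it. Separately, the term $-c_t(L_t - L)\Exp{D_h(x_{t+1}, x_t)}$ is nonpositive once we observe that the algorithm is stated with $L_0 = L$ and the convergence analysis tacitly assumes $L_t \geq L$ (the conservative-stepsize regime under which the application of relative smoothness used in Lemma~\ref{L:sgd^{(i)}ter_decrease_general} is meaningful); hence this contribution may be discarded as well.

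What remains after these cancellations and dropped terms is
\[
\sum_{t=0}^{k-1} c_t\bigl(\Exp{f(x_{t+1})} - f(x_*)\bigr) \;\leq\; (L - \mu) D_h(x_*, x_0) + \sigma^2 \sum_{t=0}^{k-1} \frac{c_t}{L_t}.
\]
Reindexing the left-hand side by $s = t+1$ turns the coefficient of $\Exp{f(x_s) - f(x_*)}$ into $c_{s-1}$, and dividing both sides by $C_k = \sum_{t=1}^k c_{t-1}$ yields exactly the claimed bound~\eqref{sgd_dec_Cbound}.

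The only delicate bookkeeping is getting the index shift right so that the recursion $c_t(L_t - \mu) = c_{t-1} L_{t-1}$ aligns with the pairing of successive Bregman terms in the telescoping sum; the rest is mechanical. A minor subtlety worth flagging is that all $c_t$ must be positive for the weighting to be meaningful, which follows from $L_t > \mu$ (guaranteed since $L \geq \mu$ and $L_t \geq L$).
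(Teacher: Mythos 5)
Your proof is correct and follows essentially the same route as the paper: apply Lemma~\ref{L:sgd^{(i)}ter_decrease_general} at $x=x_*$, take full expectation, multiply by $c_t$, telescope via $c_t(L_t-\mu)=c_{t-1}L_{t-1}$, drop the nonpositive $-c_{k-1}L_{k-1}\Exp{D_h(x_*,x_k)}$ and $-(L_t-L)\Exp{D_h(x_{t+1},x_t)}$ terms, and divide by $C_k$. You were in fact a bit more careful than the paper, which silently ``ignores'' the $(L_t-L)$ term without flagging that this requires $L_t\geq L$, and which contains a typo (it writes ``set $x=x_t$'' where $x=x_*$ is meant).
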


\begin{proof}
Let us set $x=x_t$ in \eqref{Eq:sgd_dec^{(i)}ter_general}, take 
the expectation of over the algorithm and use tower property. Ignoring the last term we get
\begin{equation*}
\Exp{f(x_{t+1})-f(x_*)}
\oo{\leq} 
(L_t-\mu)D_h(x_*,x_t)- L_t\Exp{D_h(x_*,x_{t+1})}+\frac{\sigma^2}{L_t}.
\end{equation*}
Multiplying the above by by $c_t$ and summing for $t=0$ to $k-1$ we obtain 

\begin{eqnarray*}
\sum_{t=1}^k c_{t-1} \Exp{f(x_t)-f(x_*)}
&\leq &
 (L_0-\mu)D_h(x_*,x_0)-c_{k-1}L_k\Exp{D_h(x_*,x_k)}+ \sigma^2\sum_{t=0}^{k-1}  \frac{c_t}{L_t}  
 \\
 &\leq &
 (L_0-\mu)D_h(x_*,x_0)+ \sigma^2\sum_{t=0}^{k-1}  \frac{c_t}{L_t}. 
\end{eqnarray*}
Dividing by $C_k$ we get the desired result. 
\end{proof}

Theorem~\ref{t: SGD_Ctheorem} itself does not provide insight about the convergence rate of Algorithm~\algS\,, as it strongly depends on the choice of stepsize parameters $\{L_t\}$. We study a suitable choice of stepsize rule in the next subsection.

\subsection{Choice of stepsizes for Theorem~\ref{t: SGD_Ctheorem}}

The goal of this section is to study a choice of stepsize parameters in Theorem~\ref{t: SGD_Ctheorem}. We will analyze separately two cases -- $\mu=0$ and $\mu>0$.

Firstly we start with non-strongly convex case $\mu=0$. The following lemma provides us with the choice of stepsizes minimizing right hand side of \eqref{sgd_dec_Cbound} - stepsizes giving us the best possible convergence rate for Theorem~\ref{t: SGD_Ctheorem}.

\begin{corollary}[Nonstrongly convex rate for Algorithm~\algS]
Suppose that $\mu=0$, i.e. $f$ is convex but not necessarily relative strongly convex. Suppose that we intend to run $k$ iterations of Algorithm \algS\,. Then, constant stepsize controlling parameters $L_t$ given by
\[
L_t=
\frac{\sigma^2L(k-1) } {-\sigma^2+\sqrt{\sigma^4+\sigma^2AL(k-1)}}=\cO(k^{-\frac12})
\]
minimize LHS of \eqref{sgd_dec_Cbound}, obtaining
\begin{equation*}
\sum_{t=1}^k \frac{ \Exp{f(x_t)-f(x_*)}}{k}
\oo{\leq} 
\cO(k^{-\frac12}).
\end{equation*}

\label{cor: sgd O1/sqrt(k)}

\end{corollary}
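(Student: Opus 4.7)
The plan is to invoke Theorem~\ref{t: SGD_Ctheorem} under the constant-stepsize ansatz $L_t \equiv L_*$ and then optimize the resulting bound over the single scalar $L_*$.

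First I would observe that, with $\mu = 0$ and $L_t$ constant, the recursion $c_t = \frac{L_{t-1}}{L_t - \mu}\, c_{t-1}$ collapses to $c_t = c_{t-1}$; together with $c_0 = 1$ this gives $c_t = 1$ for every $t$ and $C_k = k$. Substituting into \eqref{sgd_dec_Cbound} collapses the right-hand side to the simple two-term expression
\[
\frac{1}{k}\sum_{t=1}^{k}\E{f(x_t) - f(x_*)} \;\leq\; \frac{L_*\, D_h(x_*, x_0)}{k} + \frac{\sigma^2}{L_*}.
\]
Because the algorithm fixes $L_0 = L$, treating the very first iteration separately (with $L_0 = L$ and $L_t = L_*$ for $t \geq 1$) produces $C_k = 1 + (k-1)L/L_*$ and a slightly more elaborate right-hand side; crucially, the leading asymptotics in $k$ are unchanged.

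Next, I would minimize this bound over $L_* > 0$ by setting its derivative with respect to $L_*$ to zero. After clearing denominators the first-order condition becomes a quadratic in $L_*$ whose leading coefficient is (up to lower-order terms) $A \eqdef D_h(x_*, x_0)$ and whose constant term is $-\sigma^2 L(k-1)$; applying the quadratic formula and then rationalizing the numerator recovers precisely the displayed stepsize
\[
L_* \;=\; \frac{\sigma^2 L(k-1)}{-\sigma^2 + \sqrt{\sigma^4 + \sigma^2 A L(k-1)}}.
\]
Since the discriminant of this quadratic grows like $\sigma\sqrt{A L k}$, the optimal $L_*$ itself scales as $\Theta(\sqrt{k})$, so the trailing ``$=\cO(k^{-1/2})$'' attached to $L_t$ in the corollary must be a typographical inversion -- it is the suboptimality that decays at this rate, not the stepsize.

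Plugging this optimal $L_*$ back into the bound, both terms balance at order $\sqrt{A L}/\sqrt{k}$ up to absolute constants, and one reads off $\frac{1}{k}\sum_{t=1}^{k}\E{f(x_t) - f(x_*)} = \cO(k^{-1/2})$ as claimed. The main obstacle is the bookkeeping forced by the fixed initial stepsize $L_0 = L$, which perturbs $C_k$ away from $k$ and turns the naive first-order condition $A L_*^2 = \sigma^2 k$ into the full quadratic above; this is precisely what produces the elaborate closed form of $L_*$ displayed in the statement, but is a routine one-variable calculus exercise that does not affect the final rate.
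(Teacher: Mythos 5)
Your proposal captures the paper's calculation but has a genuine gap: the corollary asserts that the displayed constant stepsize \emph{minimizes} the bound on the right-hand side of \eqref{sgd_dec_Cbound} over stepsize sequences $\{L_t\}_{t\ge 1}$ (with $L_0=L$ fixed), not merely over the one-parameter family of constant sequences. You start from the ansatz $L_t\equiv L_*$ and optimize over the single scalar, which proves only the weaker claim. The paper closes this by switching to the reciprocals $l_t = L_t^{-1}$, under which the bound becomes $\bigl(A+\sigma^2 L\sum_{t\ge1}l_t^2\bigr)/\bigl(1+L\sum_{t\ge1}l_t\bigr)$ --- a ratio with denominator linear and numerator strictly convex in $(l_1,\ldots,l_{k-1})$ --- and then observes that replacing any pair $l_t\ne l_s$ by their common average leaves the denominator unchanged while strictly decreasing the numerator, forcing the minimizer to be a constant sequence. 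This balancing step, enabled by the change of variables that makes $C_k$ linear in the unknowns, is the missing ingredient in your plan; without it (or an equivalent KKT/Lagrangian argument) the minimization claim is unjustified.

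There is also a constant discrepancy: the $A$ appearing in the displayed stepsize is the paper's $A=(L-\mu)D_h(x_*,x_0)+\sigma^2 L$, which with $\mu=0$ carries a factor of $L$ multiplying $D_h(x_*,x_0)$, not your $A\eqdef D_h(x_*,x_0)$. The leading coefficient of the quadratic in $L_*$ is therefore off by roughly a factor of $L$, so the closed form you derive would not ``precisely'' match the one printed. Neither issue affects the final $\cO(k^{-1/2})$ asymptotics, and your observation that the ``$L_t = \cO(k^{-1/2})$'' in the statement must be a typo (the parameters $L_t$ grow like $\sqrt{k}$; it is $1/L_t$ and the resulting suboptimality that decay) is correct.
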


Note that Stochastic Gradient Descent in the standard smooth setting given by \eqref{eq:sgd_standard} with constant stepsize rule depending on the number of iterations enjoys $O(1/\sqrt{k})$ rate as well \cite{shai_book}. 

Let us now proceed with the case $\mu>0$. Note that the average of iterates of Stochastic Gradient Descent in the standard smooth setting given by \eqref{eq:sgd_standard} with stepsize $\gamma_t=\tfrac{1}{\mu t}$ enjoys $O(\log(k)/k)$ rate \cite{shai_book}. Employing tail averaging technique one can obtain $O(1/k)$ rate \cite{rakhlin2011sgd_tail}.

\begin{lemma}[Choice of stepsizes for Algorithm~\algS]
Suppose that sequence $\{ L_t \}$ is nondecreasing and that sequence $\{c_t\}$ is monotonic for $t\geq T$. In order to attain $O(1/\epsilon)$ rate for stochastic gradient descent we must have $L_t=\Theta(t)$.

\label{L: sgd good choice of stepsizes}
\end{lemma}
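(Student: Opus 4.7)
The plan is to apply the bound~\eqref{sgd_dec_Cbound} and demand that both the bias summand $(L-\mu)D_h(x_*,x_0)/C_k$ and the variance summand $\sigma^2\sum_{t=0}^{k-1}c_t/(C_kL_t)$ be of order $1/k$. I will exclude $L_t=o(t)$ using the variance term and $L_t=\omega(t)$ using the bias term, leaving $L_t=\Theta(t)$ as the only option.

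First I would turn the recursion $c_t(L_t-\mu)=c_{t-1}L_{t-1}$ into the telescoping identity $c_tL_t-c_{t-1}L_{t-1}=\mu c_t$, which summed from $1$ to $k-1$ yields $c_{k-1}L_{k-1}=L_0+\mu(C_k-1)$. Hence for $\mu>0$ we get $C_k=\Theta(c_{k-1}L_{k-1}/\mu)$, so the bias condition $1/C_k=O(1/k)$ is equivalent to $c_{k-1}L_{k-1}=\Omega(k)$. The same recursion gives the product formula
\begin{equation*}
c_{k-1}L_{k-1}\;=\;L_0\prod_{r=1}^{k-1}\frac{L_r}{L_r-\mu}\;\le\;C\exp\!\Big(2\mu\sum_{r=1}^{k-1}\tfrac{1}{L_r}\Big),
\end{equation*}
where I use $(1-\mu/L_r)^{-1}\le e^{2\mu/L_r}$ for $L_r\ge 2\mu$. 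If $L_t=\omega(t)$, then for every $\varepsilon>0$ we have $1/L_t<\varepsilon/t$ eventually, so $\sum_{r\le k-1}1/L_r=o(\log k)$ and thus $c_{k-1}L_{k-1}\le k^{o(1)}$; this contradicts $c_{k-1}L_{k-1}=\Omega(k)$ and rules out super-linear growth of $L_t$.

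For the variance term I use only the nondecreasingness of $\{L_t\}$: since $L_t\le L_{k-1}$ for all $t\le k-1$, we have
\begin{equation*}
\sum_{t=0}^{k-1}\frac{c_t}{C_kL_t}\;\ge\;\frac{1}{L_{k-1}}\cdot\frac{\sum_{t=0}^{k-1}c_t}{C_k}\;=\;\frac{1}{L_{k-1}},
\end{equation*}
so this contribution is at least $\sigma^2/L_{k-1}$. Forcing it to be $O(1/k)$ yields $L_{k-1}=\Omega(k)$, i.e., $L_t=\Omega(t)$. Combining with the previous half gives $L_t=\Theta(t)$.

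The main obstacle I foresee is extracting a sharp enough estimate of $c_{k-1}L_{k-1}$ from the recursion; the telescoping identity $c_tL_t-c_{t-1}L_{t-1}=\mu c_t$ is the key device that converts the coupled condition on $(c_t,L_t)$ into the elementary asymptotic condition on $\sum 1/L_t$ alone, after which both halves of the argument become routine. The monotonicity of $\{c_t\}$ on $t\ge T$ plays only an organisational role in ensuring that the various asymptotic comparisons are well defined; the binding constraints are already captured by the two displayed inequalities above.
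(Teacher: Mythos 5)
Your variance-term argument is correct and in fact streamlines the paper's: the inequality $\sum_{t} c_t/(C_k L_t) \ge 1/L_{k-1}$ uses only that $\{L_t\}$ is nondecreasing, whereas the paper invokes this estimate only in its increasing-$c_t$ case. The telescoping identity $c_{k-1}L_{k-1} = L_0 + \mu(C_k - 1)$ is likewise a clean device not appearing explicitly in the paper's argument. However, there is a genuine gap in the bias half, and it stems from dismissing the monotonicity of $\{c_t\}$ as merely organisational.

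Concretely: what you establish from the product bound is that $L_t = \omega(t)$ is impossible, i.e.\ $\liminf_{t\to\infty} L_t/t < \infty$. Combined with the variance half ($L_t = \Omega(t)$, i.e.\ $\liminf L_t/t > 0$), this only pins down $\liminf L_t/t$ to a finite positive value. The conclusion $L_t = \Theta(t)$ also requires $L_t = O(t)$, i.e.\ $\limsup L_t/t < \infty$, and ``not $\omega(t)$'' does not deliver that. A nondecreasing sequence can fail both $\omega(t)$ and $O(t)$ simultaneously — long stretches of essentially linear growth punctuated by occasional large jumps — and your exclusion argument does not touch this regime. The hypothesis that $\{c_t\}$ is eventually monotonic is precisely what rules this out, and the paper uses it substantively: in the increasing-$c_t$ case it gives $L_t < L_{t-1}+\mu$ outright, hence $L_t = O(t)$; in the nonincreasing-$c_t$ case it forces $c_t/c_{t-1} \to 1$, which controls the increments $L_t - L_{t-1}$. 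So the claim in your last paragraph that ``the binding constraints are already captured by the two displayed inequalities'' is exactly where the proof is incomplete — you still need to convert ``not $\omega(t)$'' into ``$O(t)$'', and that conversion is not automatic. Either use the monotonicity hypothesis as the paper does, or push the product analysis further (showing that a jump at time $r_0$ costs a factor of order $r_0$ in $c_{k-1}L_{k-1}$, so late jumps are incompatible with $C_k=\Omega(k)$).
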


Lemma~\ref{L: sgd good choice of stepsizes} provides us with an insight on how stepsizes in Theorem~\ref{t: SGD_Ctheorem} should be chosen in order to attain $O(1/k)$ convergence rate - sequence of stepsize controlling parameters $\{L_t\}$ should be upper and lower bounded by linear function in $t$. A faster or slower rate of increase of $\{L_t\}$ would not result in $O(1/k)$ convergence rate as $k\to \infty$.

The following lemma provides us a bound on convergence rate of Randomized Stochastic Gradient Descent, when sequence $\{L_t\}$ increases linearly with $L_0=L$ i.e. $L_t=L+\alpha t$ for some $\alpha>0$.

\begin{lemma}[Linearly increasing stepsize parameters for Algorithm~\algS]
Consider the convergence rate given by Theorem~\ref{t: SGD_Ctheorem} and stepsize parameters given by $L_t=L+\alpha t$ for some $\alpha>0$. Define 
\begin{equation}
m_\mu\oo{\eqdef}\max\left (\alpha,\mu-\alpha\right ).
\label{eq:m_mu_def}
\end{equation} 
If we choose $\alpha>\mu$ then

\begin{eqnarray*}
C_k 
&\geq & 
   \left (L-\mu\right )^{1-\frac{\mu}{\alpha}}
\frac{\left (L-\mu+\left (k+1\right )\alpha\right )^{\frac{\mu}{\alpha}}-\left (L-\mu+\alpha\right )^{\frac{\mu}{\alpha}}}{\mu},
\\
\sum_{t=0}^{k-1}\frac{c_t}{L_t} 
&\leq &
\frac1L+
 \left (L-\mu+\alpha\right )^{1-\frac{\mu}{\alpha}}
\frac{\left (L-\mu\right )^{\frac{\mu}{\alpha}-1}-\left (L-\mu+k\alpha\right )^{\frac{\mu}{\alpha}-1}}{\alpha-\mu} , 
\end{eqnarray*}
if $\alpha =\mu$ then

\begin{eqnarray*}
C_k 
&= & k ,
\\
\sum_{t=0}^{k-1}\frac{c_t}{L_t}
&\leq &
 \frac{\log\left (L+k\mu\right )-\log\left (L\right )}{\mu} + \frac{1}{L},
\end{eqnarray*}
and finally if $\alpha<\mu$, then
 
\begin{eqnarray*}
C_k 
&\geq &
1+ \frac{\Gamma_{\alpha}\left (L-\mu+\alpha\right )}{\Gamma_{\alpha}\left (L\right )}
\frac{\left (L-m_{\mu}+(k-1)\alpha\right )^{\frac{\mu}{\alpha}}-\left (L-m_{\mu}\right )^{\frac{\mu}{\alpha}}}{\mu},
\\
\sum_{t=0}^{k-1}\frac{c_t}{L_t}
&\leq &
\frac1L+
\frac{\Gamma_{\alpha}\left (L-\mu+\alpha\right )}{\Gamma_{\alpha}\left (L\right )}
   \frac{\left (L+k\alpha\right )^{\frac{\mu}{\alpha}-1}-L^{\frac{\mu}{\alpha}-1}}{\mu-\alpha},
\end{eqnarray*}
for function $\Gamma_{\alpha}$ defined by \eqref{eq:Gamma_def}. 
 In the special case where $\alpha=\tfrac{\mu}{2}$ we obtain
 
 \begin{equation*}
 \sum_{t=1}^k \frac{c_{t-1}}{C_k} \Exp{f\left (x_t\right )-f\left (x_*\right )}
 \oo{\leq} 
\frac{(L-\mu)(L-\frac{\mu}{2})\mu D_h(x_*,x_0)+\sigma^2\mu (1-\frac{\mu}{2L }+k) }{(L+(k-2)\frac{\mu}{2})^2-(L-\frac{\mu}{2})^2+(L-\frac{\mu}{2})\mu},
 \end{equation*}
 where $$
c_t\oo{=}\frac{L+\frac{\mu}{2}\left (t-1\right )}{L-\frac{\mu}{2}},\qquad  C_k\oo{=}\sum_{t=0}^{k-1}c_t.
$$

\label{l:sgd_dec_no_mu}
\end{lemma}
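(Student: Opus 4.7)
The plan is to unroll the recursion as a product,
\[
c_t \;=\; \prod_{s=1}^{t}\frac{L+\alpha(s-1)}{L-\mu+\alpha s},
\]
and then handle the three regimes $\alpha>\mu$, $\alpha=\mu$, and $\alpha<\mu$ separately by sum-versus-integral comparisons. For $\alpha=\mu$ every factor in the product equals $1$, so $c_t\equiv 1$, $C_k=k$ exactly, and $\sum_{t=0}^{k-1} c_t/L_t=\sum_t(L+\mu t)^{-1}$; I would split off the $t=0$ term to produce the $1/L$ and bound the remainder by $\int_0^{k-1}(L+\mu s)^{-1}\,ds$ using monotonicity of the integrand, which evaluates to $(\log(L+\mu(k-1))-\log L)/\mu$ and is enlarged to $(\log(L+k\mu)-\log L)/\mu$ to match the stated form.

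For $\alpha\neq\mu$ I would introduce the antiderivative $\Phi(t)=(L-\mu+\alpha t)^{\mu/\alpha}/\mu$, whose derivative $\Phi'(t)=(L-\mu+\alpha t)^{\mu/\alpha-1}$ matches the asymptotic scaling of $c_t$ since $c_t/c_{t-1}=1-(\alpha-\mu)/(L-\mu+\alpha t)$. When $\alpha>\mu$ the sequence $c_t$ is decreasing, so one can sandwich $c_t$ between $\int_{t}^{t+1}\Phi'$ and $\int_{t-1}^{t}\Phi'$ and telescope, obtaining a lower bound on $C_k$ and an upper bound on $\sum c_t/L_t$; the telescoped endpoints are precisely the $(L-\mu+\alpha(k+1))^{\mu/\alpha}$ and $(L-\mu+\alpha)^{\mu/\alpha}$ terms appearing in the claim. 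When $\alpha<\mu$ the sequence $c_t$ is instead increasing, so the $t=0$ term (equal to $1$) must be peeled off at the start, which is the source of the $+1$ in the $C_k$ bound and the $+1/L$ in the $\sum c_t/L_t$ bound, and the sum-to-integral comparison runs in the opposite direction. Here I would rewrite $c_t$ using the generalised Gamma function $\Gamma_\alpha$ of \eqref{eq:Gamma_def}, which is precisely designed so that $\prod_{s=1}^t(L+\alpha(s-1))$ and $\prod_{s=1}^t(L-\mu+\alpha s)$ become ratios of $\Gamma_\alpha$ values, producing the constant prefactor $\Gamma_\alpha(L-\mu+\alpha)/\Gamma_\alpha(L)$ cleanly.

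For the special case $\alpha=\mu/2$ the exponent $\mu/\alpha=2$ makes the product telescope explicitly (adjacent numerators and denominators cancel), giving $c_t=(L+(t-1)\mu/2)/(L-\mu/2)$. Since this is an arithmetic progression in $t$, both $C_k$ and $\sum c_t/L_t$ admit elementary closed-form expressions which I would substitute into Theorem~\ref{t: SGD_Ctheorem} and simplify to obtain the final displayed bound on the weighted expected suboptimality.

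The main obstacle will be the $\alpha<\mu$ regime: the direction of each monotonicity-based inequality has to be chosen consistently with the sign of $\mu/\alpha-1$, and the prefactors arising from $\Gamma_\alpha$ evaluated at shifted arguments must be tracked carefully, since a mis-shift by one unit would spoil the constant in the leading term. The peeling of the $t=0$ contribution, although numerically small, is essential; without it the lower bound on $C_k$ degrades by exactly the $+1$ that is stated, and similarly for $\sum c_t/L_t$.
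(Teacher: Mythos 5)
Your high-level plan matches the paper's: write $c_t$ as a product, handle the three regimes by sum-to-integral comparisons, peel the $t=0$ term in the $\alpha<\mu$ regime, and use the telescoping product for $\alpha=\mu/2$. However, there is a concrete gap in the $\alpha\neq\mu$ analysis.

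For $\alpha>\mu$ you propose to sandwich $c_t$ directly between $\int_t^{t+1}\Phi'$ and $\int_{t-1}^t\Phi'$, where $\Phi'(t)=(L-\mu+\alpha t)^{\mu/\alpha-1}$. But $c_t$ is a \emph{product} of rational factors, and there is no reason it is directly comparable to $\Phi'(t)$ without a multiplicative constant: at $t=0$ you would need $\Phi(1)-\Phi(0)\leq c_0=1$, which is not generally true. In fact, the stated bound carries the prefactor $(L-\mu)^{1-\mu/\alpha}$, which cannot emerge from a bare integral sandwich — it is precisely the size of the missing constant. The paper obtains it by first writing $c_t=\frac{\Gamma_\alpha(L-\mu+\alpha)}{\Gamma_\alpha(L)}\cdot\frac{\Gamma_\alpha(L+t\alpha)}{\Gamma_\alpha(L-\mu+(t+1)\alpha)}$ and then invoking a Gautschi-type two-sided estimate on ratios $\Gamma_\alpha(x+\alpha)/\Gamma_\alpha(x+s)$ with $0\le s\le\alpha$ (this is Lemma~\ref{l:gautschi_alpha}, proved from the log-convexity of $\gamma_\alpha$ established in Lemma~\ref{l:log_gamma}). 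That step is the technical crux and you never supply it. You do mention $\Gamma_\alpha$ for the $\alpha<\mu$ case, but even there you say nothing about how the ratio $\Gamma_\alpha(L+t\alpha)/\Gamma_\alpha(L-\mu+(t+1)\alpha)$ gets converted into the explicit power $(L-m_\mu+t\alpha)^{\mu/\alpha-1}$ — the paper needs the additional decomposition into $\lfloor\mu\rfloor_\alpha$ integer shifts (each contributing a rational factor via $\Gamma_\alpha(x+\alpha)=x\Gamma_\alpha(x)$) plus Gautschi on the fractional remainder $\{\mu\}_\alpha$, and the direction of each inequality must be tracked against the sign of $\mu/\alpha-1$ exactly as you anticipate. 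Without stating and using the Gautschi inequality for $\Gamma_\alpha$, your argument does not produce the prefactors $(L-\mu)^{1-\mu/\alpha}$, $(L-\mu+\alpha)^{1-\mu/\alpha}$, and $\Gamma_\alpha(L-\mu+\alpha)/\Gamma_\alpha(L)$ that appear in the lemma's statement.

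The $\alpha=\mu$ and $\alpha=\mu/2$ parts of your plan are correct and essentially identical to the paper's.
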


Lemma~\ref{l:sgd_dec_no_mu} provides us with an useful insight on the linearly increasing choice of stepsize controlling parameters in Theorem~\ref{t: SGD_Ctheorem}. We consider the following 3 cases:

\begin{itemize}
\item [$\alpha>\mu$:] since $C_k=\Omega(k^{\mu/\alpha})$ and $\sum_{t=0}^{k-1}\tfrac{c_t}{L_t}=O(1)$, the convergence rate of the weighted sum of errors in functional values is $O(1/k^{\mu/\alpha})$. This is worse than the rate of stochastic gradient descent in standard smooth setting. However, weights from left hand side of the Theorem~\ref{t: SGD_Ctheorem} are decreasing in this case.  

\item [$\alpha=\mu$:] Since $C_k=\Omega(k)$ and $\sum_{t=0}^{k-1}\tfrac{c_t}{L_t}=O(\log(k))$, the convergence rate of the weighted sum of errors in functional values is $O(\log(k)/k)$. Note that weighted sum of errors in objective from left hand side of \eqref{sgd_dec_Cbound} is an average in this case. The average of iterates of Stochastic Gradient Descent with stepsize parameters $L_t=\mu t$ under standard strong convexity assumption enjoys $O(\log(k)/k)$ rate as well \cite{shai_book}.

\item [$\alpha<\mu$:] Since $C_k=\Omega(k^{\mu/\alpha})$ and $\sum_{t=0}^{k-1}\tfrac{c_t}{L_t}=O(k^{\mu/\alpha-1})$  the convergence rate of the weighted sum of errors in functional values is $O(1/k)$. This is as good as the performance of Stochastic Gradient Descent in the standard smooth setting with tail averaging technique \cite{rakhlin2011sgd_tail}. Note that weights from left hand side of the Theorem~\ref{t: SGD_Ctheorem} are increasing, thus we put more value to latter iterates which has a similar effect to the convergence rate as tail averaging in the standard smooth setting. Recall that we use stepsize parameters given by $L_t=L+\alpha t$ for $\alpha<\mu$ in contrast of $L_t=\mu t$ used in \cite{rakhlin2011sgd_tail} (rewritten to our notation).
\end{itemize}

The desired $O(1/k)$ convergence rate is obtained for $\alpha<\mu$. In practice, the condition $\alpha<\mu$ is not trivial to be satisfied, as the relative strong convexity parameter $\mu$ might be unknown and eventually very small. However, this issue can be overcame when strongly convex regularization is used - as we are aware of strongly convex parameter in this case.

\subsection{Minibatch relSGD}
As mentioned previously, if $h$ is $\mu_h$--strongly convex, $\sigma^2$ from the Assumption~\ref{a:sgd_variance} can be chosen so that $\mu_h \sigma^2$ is a global upper bound on the variance of $\widetilde{g}_t$.

Suppose that for $i=1,2,\dots, \mb$ random variables $\widetilde{g}_t^i$ are independent unbiased estimators of $\nabla f(x_t)$ coming from the same distribution. 

Clearly, $\tfrac{1}{\mb}\sum_{i=1}^\mb \widetilde{g}_t^i$ is an unbiased estimator of $\nabla f(x_t)$, thus we can set it in the update rule in Algorithm~\algS\,. Note that $\tfrac{1}{\mb}\sum_{i=1}^\mb \widetilde{g}_t^i$ has $\mb$ times smaller variance comparing to $\widetilde{g}_t^i$ for all $i\leq \mb$. Thus, if we choose $\sigma^2$ such that $\sigma^2 \mu_h$ is an upper bound on the variance, we can allow it to be $\mb$ times smaller when using minibatch of size $\mb$.

\begin{corollary}[Convergence of Minibatch relSGD]
Suppose that $f$ is $L$ smooth and $\mu$ strongly convex relative to $\mu_h$ strongly convex function $h$. Define $c_1=1$ and $c_t=\frac{L_{t-1}}{L_{t}-\mu}c_{t-1}$ for $t\geq 2$ and $C_k=\sum_{t=1}^k c_{t-1}$.
Assume that variance unbiased gradient estimator $ \widetilde{g}_t^i$ of $\nabla f(x_t)$ is upper bounded by $\sigma^2\mu_h$ for all $i\leq \mb$ and $t\leq k$ and also that $\widetilde{g}_t^i$ are independent and identically distributed random variables. Then, iterates of Algorithm~\algS\ with gradient estimator $\tfrac{1}{\mb}\sum_{i=1}^\mb \widetilde{g}_t^i$ satisfy:
\begin{equation*}
\sum_{t=1}^k \frac{c_{t-1}}{C_k} \Exp{f(x_t)-f(x_*)}\oo{\leq} 
\frac{(L_0-\mu)D_h(x_*,x_0)}{C_k}+ \frac{\sigma^2}{\mb}\sum_{t=1}^{k-1}  \frac{c_t}{C_kL_t}.
\end{equation*}
\label{cor: sgd_mb_Cbound}

\end{corollary}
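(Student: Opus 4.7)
The strategy is to reduce this corollary to a direct application of Theorem~\ref{t: SGD_Ctheorem} by showing that the minibatch gradient $\bar{g}_t \eqdef \tfrac{1}{\mb}\sum_{i=1}^\mb \widetilde{g}_t^i$ plays the role of $\widetilde{g}_t$ with an effective variance parameter that is $\mb$ times smaller.

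First I would verify that $\bar{g}_t$ is an unbiased estimator of $\nabla f(x_t)$, which is immediate by linearity of expectation. Next, since the $\widetilde{g}_t^i$ are i.i.d.\ conditional on $x_t$, the conditional variance satisfies
\[
\Exp{\|\nabla f(x_t) - \bar{g}_t\|^2 \;|\; x_t} \oo{=} \frac{1}{\mb}\Exp{\|\nabla f(x_t) - \widetilde{g}_t^1\|^2\;|\;x_t} \oo{\leq} \frac{\sigma^2\mu_h}{\mb}.
\]

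Second, I would verify that Assumption~\ref{a:sgd_variance} holds for the algorithm driven by $\bar{g}_t$ with the constant $\sigma^2$ replaced by $\sigma^2/\mb$. This is exactly the computation performed in Remark~\ref{rem:bound_var}: using Cauchy--Schwarz and $\mu_h$-strong convexity of $h$ (which upper bounds $\|x_{t+1} - x_{(t+1,*)}\|$ by $\frac{1}{L_t\mu_h}\|\bar g_t - \nabla f(x_t)\|$), one obtains
\[
L_t\Exp{\langle \nabla f(x_t)-\bar g_t,\, x_{t+1}-x_{(t+1,*)}\rangle\;|\;x_t}
\oo{\leq}
\frac{1}{\mu_h}\Exp{\|\nabla f(x_t)-\bar g_t\|^2\;|\;x_t}
\oo{\leq}
\frac{\sigma^2}{\mb}.
\]
Thus Assumption~\ref{a:sgd_variance} holds for the minibatch variant with effective noise parameter $\sigma^2/\mb$.

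Finally I would invoke Theorem~\ref{t: SGD_Ctheorem} verbatim, since its proof used only Assumption~\ref{a:sgd_variance} and unbiasedness of the gradient estimator. Replacing $\sigma^2$ by $\sigma^2/\mb$ in the conclusion \eqref{sgd_dec_Cbound} yields the stated bound. The only mild subtlety is the step moving from a bound on variance to the inner-product quantity appearing in Assumption~\ref{a:sgd_variance}, but this is already handled by Remark~\ref{rem:bound_var} and requires no new ideas; there is no genuine obstacle.
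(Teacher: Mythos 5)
Your proposal is correct and matches the paper's (implicit) argument: the paper does not provide a formal proof of this corollary, but the preceding paragraphs sketch exactly your reduction — the minibatch average has $\mb$-times-smaller variance, Remark~\ref{rem:bound_var} converts a variance bound into Assumption~\ref{a:sgd_variance} with parameter $\sigma^2/\mb$, and Theorem~\ref{t: SGD_Ctheorem} is then applied verbatim.
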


Let us consider a stepsize rule which yields $O(1/k)$ convergence rate as obtained from Lemma \ref{l:sgd_dec_no_mu}. 
In this case, $\mb$--minibatching does not bring linear speedup in terms of the total number of iteration to attain desired accuracy, and thus in terms of the actual work done by the algorithm, it is the best to choose smallest possible minibatch $\mb=1$. 
However, minibatching can be particularly useful in the parallel setup - when one can obtain the a multiple gradient estimator by different processors at the same time.

\section{Experiments}
In this short section we numerically test the convergence of relGD, relRCD and relSGD on two artificial examples, in order to illustrate their potential.

\subsection{An experiment with relRCD}
In this example we compare standard gradient descent to relGD an relRCD. Recall that relRCD is always at most as fast relCD, once it can be applied. 
Our first experiment illustrates the need of relative smoothness assumption - as gradient descent with fixed stepsize applied on the considered function is extremely slow. 

Let us consider a function 
\[
f(x)\oo{\eqdef}\frac{1}{2} x^{\top}Mx+\frac{1}{10}\sum_{i=1}^{100}  \left(x^{(i)}\right)^4,
\]
where $x\in \R^{100}$ and 
\[
M=\frac{A^{\top}A}{\lambda_{\max}(A^{\top}A)}.
\]
Above, $A\in \R^{n\times n}$ is a random matrix with entries from normal distribution with zero mean and variance 1. 

We will use the following reference function 
\[
h(x)\oo{\eqdef} \frac12 \| x\|^2+\frac{1}{10}\sum_{i=1}^{100} \left(x^{(i)}\right)^4.
\]

From Lemma \ref{l:eso_example} we know that $f$ is 1--smooth relative to $h$. On top of that, $(f,\hat{S})\sim \mathrm{ESO}_h(v)$ with $v$ such that $v^{(i)}=\max\left(\tfrac{1}{10},(A^{\top}A)_{ii}\right)$ and uniform sampling $\hat{S}$ such that $\Prob(i\in \hat{S})=1/100$ for all $i$. 
 
In order to compare relGD and relRCD to gradient descent, we need to find a (standard) smoothness parameter $L$. For this purpose, we will restrict the domain as $\{x \,|\, \|x\|_{\infty}^{2}\leq  2\|x_0\|_{\infty}^{2}\}$. Clearly, $\frac{1}{2} x^{\top}Mx$ is 1--smooth and maximal eigenvalue of hessian of $\tfrac{1}{10}\sum_{i=1}^{100}  \left(x^{(i)}\right)^4$ is $\tfrac{12}{10}\|x\|_{\infty}^{2}$. We set $x_0$ to be random vector with independent zero mean entries with variance $10^{6}$. Thus, $L$ is in the order of $10^{6}$ in contrast to relative smoothness parameter, which is 1. 
The plot below illustrates a convergence result of gradient descent, relGD and relRCD for the artificial setting that we just described. 
 
\begin{figure}[H]
  \centering
  \includegraphics[width=0.6\linewidth]{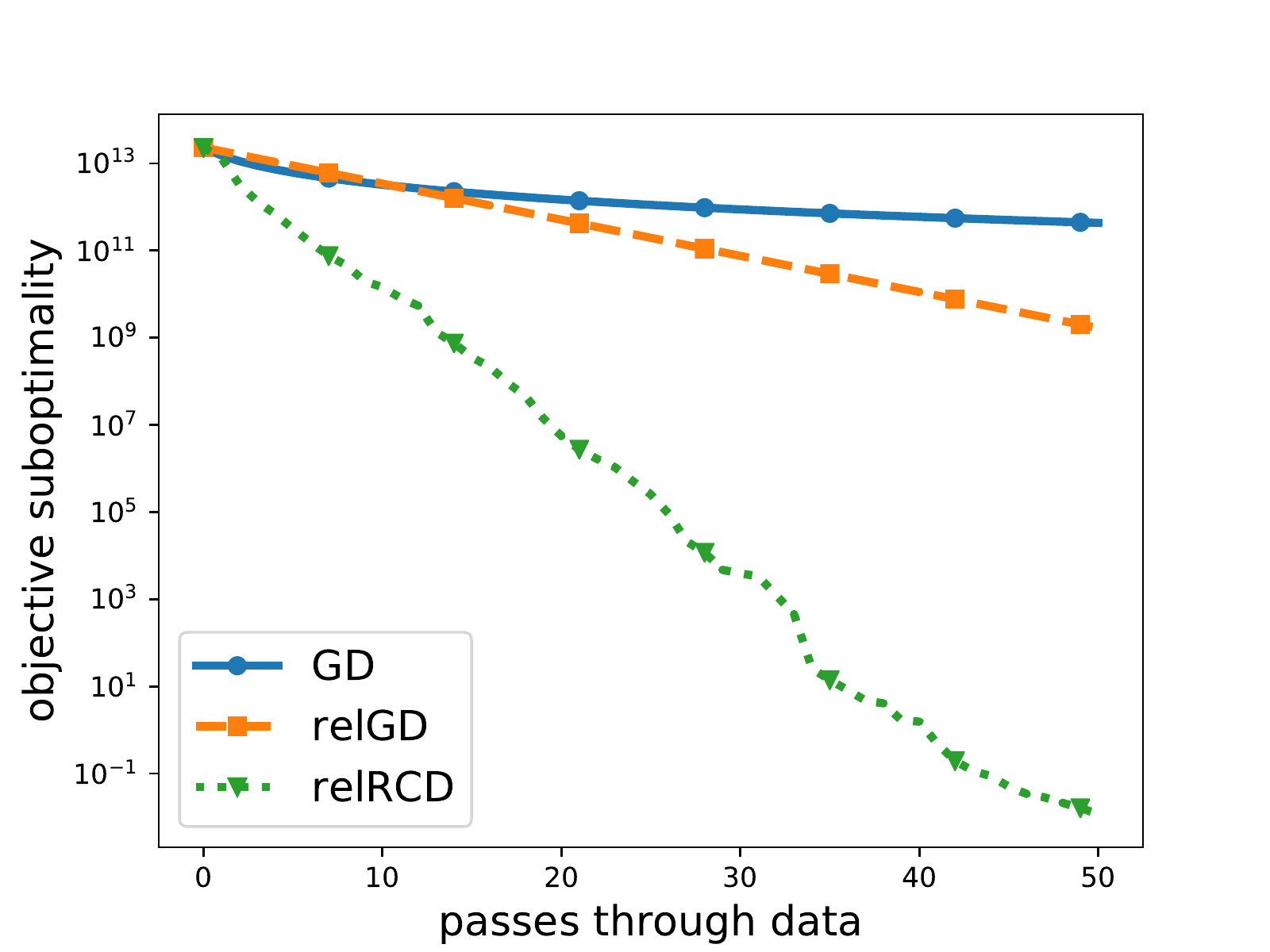}
  \caption{Comparison of Gradient descent to relGD and relRCD}
  \label{fig:rcd}
  \end{figure}

Figure~\ref{fig:rcd} show that the algorithms behave as we expected from the theory - Gradient descent has a faster start first few epochs, which is due to the fact that smoothness parameter $L$ is huge but still tight in the region far from the optimum. However, with increasing number of iterations, Gradient descent is significantly outperformed by the other two algorithms. Notice that relRCD enjoys here the best convergence rate, which is expected from the theory since ESO parameters $v$ are smaller than relative smoothness parameter. In this specific case maximal element of $v$ is 0.36.  

\subsection{An experiment with relSGD}

In this experiment we compare relGD to relSGD for various choice of stepsize parameters $L_t$.

Let us consider Poisson linear inverse problem, where one minimizes Kullback-Liebler divergence between $b$ and $Ax$:
\begin{eqnarray*}
\min_x  && f(x) \eqdef \sum_{i=1}^m f^{(i)}(x)\eqdef\sum_{i=1}^m\Big( b^{(i)} \log \frac{b^{(i)}}{(Ax)^{(i)}}+(Ax)^{(i)}-b^{(i)} \Big)
\\
\text{subject to} && 0 < x^{i}, \, \forall \,i,
\end{eqnarray*}
where $b\in \R^m_{++}$ and matrix $A\in \R^{m\times n}_+$ have nonzero rows. 
In \cite{bauschke2016NoLips}, it was shown that $f$ is $L\eqdef\sum_{i=1}^m b^{(i)}$--smooth with respect to Burg's entropy $h(x)\eqdef-\sum_{i=1}^m \log(x^{(i)})$. 

We consider here $m\nabla f^{(i)}(x)$ for randomly chosen $i$ to be an unbiased gradient estimator. Notice that the access to stochastic oracle is it is $m$ times cheaper comparing to the cost of the full gradient due to ERM structure. 

\begin{figure}[H]
  \centering
  \includegraphics[width=0.6\linewidth]{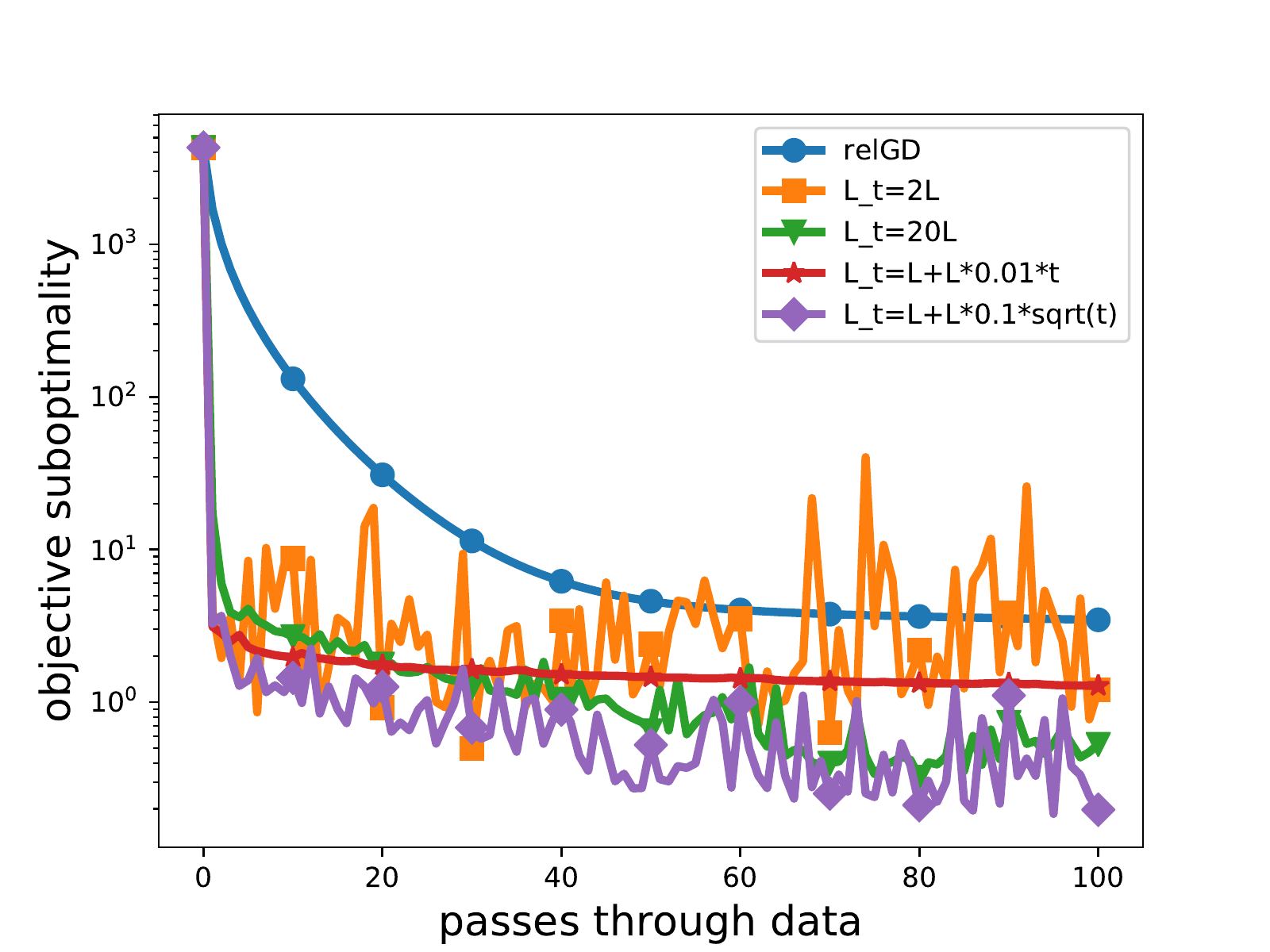}
  \caption{Comparison of relGD and relSGD for $A=|A'|,\,b=|b'|,\, x_0=|x_0'|$ where $A',\,b',\, x_0'$ are vectors (or matrix) with entries randomly generated from normal distribution with zero mean and variance 1.  }
  \label{fig:sgd}
  \end{figure}

Figure~\ref{fig:sgd} illustrates $O(1/k)$ convergence rate (sublinear) of relGD. We can clearly see that relSGD performs much faster first few passes through data, however for smaller constant $L_t$ it oscillates and seems not converge to the optimum, as expected from theory. Larger constant $L_t$ yields a very slightly slower decrease at the beginning but it is as expected less noisy and give us a better approximation after more passes through data. On the other hand, linearly increasing parameters $L_t$ seems to be too fast, as the convergence significantly slows with the increased number of iterations. We obtained the best behaviour for $L_t=\tfrac{L}{10}\sqrt{t}$, which is expected since Corollary \ref{cor: sgd O1/sqrt(k)} claims that optimal stepsize controlling parameters are $O(1/\sqrt{k})$ for non-strongly convex case, i.e. $\mu=0$.

\section{Conclusions and Extensions}

In this work, we presented first stochastic primal algorithms for minimizing Relatively smooth functions. We bridge the well developed area of stochastic smooth optimization with fresh area of relative smooth optimization. This way, we also contribute to better understanding of mirror descent, obtaining the first stochastic mirror descent type algorithm with linear convergence rate. 
However, there is still a plenty of space to extend on the results of our work. We give here few examples. 

\begin{itemize}
\item Arbitrary Sampling for relRCD. In this work we showed the convergence of Randomized Coordinate Descent under ESO assumption for uniform sampling strategies. However, Randomized Coordinate Descent under standard smoothness allows arbitrary sampling strategy \cite{NSync}, which can potentially be extended to relative smooth setting as well, and therefore to gain additional speedup from importance sampling. 

\item Variance reduced relSGD for empirical risk minimization. RelSGD converges since the sequence of stepsize controlling parameters $\{L_t\}$ goes to infinity. However, for Empirical Risk Minimization problem in standard smooth setting, one can attain a linear convergence using variance reduction techniques \cite{sag, SAGA, svrg, SDCA, sarah}, as we mentioned earlier. 

\item Application. In this work we provide only theoretical results on the algorithm and convergence rates. We did not give any application of our algorithms to a particular problem, however we believe that this work might help to solve a various optimization challenges in practice, especially since it brings a different insights on under which conditions can stochastic mirror descent perform extremely fast.

\end{itemize}

\bibliographystyle{plain} 
\bibliography{literature}
\newpage

\appendix

\section{Key technical lemmas}

For completeness, we firstly give proof of Three point property. 

\subsection{Proof of the three point property}

Note that $\phi(x)+D_h(x,z)$ is differentiable and convex in $x$. Using the definition of $z_+$ we have
$$\langle \nabla\phi(z_+)+\nabla h(z_+)-\nabla h(z),x-z_+  \rangle \oo{\geq 0},\ \forall x\in Q . $$
Using definition of $D_h(\cdot,\cdot)$ we can see that
$$\langle \nabla h(z_+)-\nabla h(z),x-z_+  \rangle\oo{=} D_h(x,z)-D_h(z_+,z)-D_h(x,z_+).$$
Putting the above together, we see that

\begin{eqnarray*}
0
&\leq&
 \langle \nabla\phi(z_+)+\nabla h(z_+)-\nabla h(z),x-z_+  \rangle \\
& =& 
 D_h(x,z)-D_h(z_+,z)-D_h(x,z_+) +\langle \nabla\phi(z_+),x-z_+  \rangle \\
 &\leq &
  D_h(x,z)-D_h(z_+,z)-D_h(x,z_+) + \phi(x)-\phi(z_+).
\end{eqnarray*}
The last inequality is due to convexity of $\phi$.

\subsection{Key lemma for analysis}

The following lemma allow us to get a convergence rate for Algorithms 

\begin{lemma}
Suppose that for positive sequences $\{f_t\}, \{D_t\} $ we have 

\begin{equation}
f_{t+1}\oo{\leq} (1-\delta)f_t+ \delta  f_*+\left(\varphi-\delta\psi\right)D_t-\varphi D_{t+1},
\label{eq:iter_general_tech}
\end{equation}
where $\delta,\,\varphi,\,\psi \in \R$ satisfy $1\geq \delta>0$ and $\varphi\geq \psi>0$. Then, the following inequality holds

\begin{equation*}
\sum_{t=1}^{k}c_t \big(f_t-f_*\big)
\leq
    \frac{(\varphi-\delta\psi)D_0 +(1-\delta)(f_0-f_*)}{1-\frac{\varphi}{\psi}+\frac{\varphi}{\psi}\Big ( \frac{\varphi}{\varphi-\delta\psi} \Big )^{k-1}}, 
\end{equation*}
where $c_t\eqdef C_t/\sum_{t=1}^kC_t$ for

\[
C_t
\oo{\eqdef}
\begin{cases}\left( \frac{\varphi}{\varphi-\delta\psi} \right )^{t-1} \frac{\varphi-\psi}{\delta^{-1}\varphi-\psi}, & 1\leq t\leq k-1 \\
\left ( \frac{\varphi}{\varphi-\delta\psi} \right )^{k-1}, & t=k.
\end{cases}
\]

\label{l:rate_from_iter}
\end{lemma}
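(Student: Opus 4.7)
The plan is to build a single telescoping inequality by multiplying the per-iteration recursion by a carefully chosen geometric weight. Write $r_t \eqdef f_t - f_*$ and set
\[
\beta \eqdef \frac{\varphi}{\varphi - \delta\psi},
\]
which is well-defined and satisfies $\beta \geq 1$ since $\varphi \geq \psi > 0$ and $\delta \in (0,1]$. The key algebraic identity is $(\varphi - \delta\psi)\beta^{t} = \varphi \beta^{t-1}$, which is exactly what makes the Bregman terms telescope.

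First I would rewrite \eqref{eq:iter_general_tech} as
\[
r_{t+1} \leq (1-\delta) r_t + (\varphi - \delta\psi) D_t - \varphi D_{t+1},
\]
multiply both sides by $\beta^t$, and sum for $t = 0,1,\dots,k-1$. Using the identity above, the $D$-terms in the sum become $\sum_{t=0}^{k-1}\bigl(\varphi \beta^{t-1} D_t - \varphi \beta^{t} D_{t+1}\bigr)$, which telescopes cleanly to $\varphi \beta^{-1} D_0 - \varphi \beta^{k-1} D_k = (\varphi - \delta\psi) D_0 - \varphi \beta^{k-1} D_k$.

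Next I would shift the index on the left-hand sum ($s = t+1$) and collect the coefficient of each $r_s$ with $1 \leq s \leq k-1$. A direct computation gives
\[
\beta^{s-1} - (1-\delta)\beta^{s} \;=\; \beta^{s-1}\bigl(1 - (1-\delta)\beta\bigr) \;=\; \beta^{s-1}\cdot\frac{\delta(\varphi - \psi)}{\varphi - \delta\psi},
\]
which matches exactly the stated $C_s$ since $\tfrac{\varphi-\psi}{\delta^{-1}\varphi - \psi} = \tfrac{\delta(\varphi - \psi)}{\varphi - \delta\psi}$, while the $r_k$ term gets coefficient $\beta^{k-1} = C_k$. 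Dropping the nonnegative $\varphi \beta^{k-1} D_k$ on the right yields
\[
\sum_{t=1}^{k} C_t\, r_t \;\leq\; (1-\delta) r_0 + (\varphi - \delta\psi) D_0.
\]

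Finally I would evaluate $\sum_{t=1}^{k} C_t$ as a geometric sum. Using $\beta - 1 = \tfrac{\delta \psi}{\varphi - \delta\psi}$,
\[
\sum_{t=1}^{k} C_t \;=\; \beta^{k-1} + \frac{\delta(\varphi-\psi)}{\varphi - \delta\psi}\cdot\frac{\beta^{k-1}-1}{\beta-1} \;=\; \beta^{k-1} + \frac{(\varphi-\psi)(\beta^{k-1}-1)}{\psi} \;=\; 1 - \frac{\varphi}{\psi} + \frac{\varphi}{\psi}\beta^{k-1},
\]
which is exactly the denominator of the target bound. Dividing through gives the claim. There is no real obstacle; the only nontrivial step is spotting the multiplier $\beta^t$, and once the telescoping identity $(\varphi-\delta\psi)\beta^t = \varphi\beta^{t-1}$ is used, the rest is bookkeeping.
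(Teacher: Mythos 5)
Your proposal is correct and follows essentially the same route as the paper's proof: multiply the recursion by $\beta^t = \left(\tfrac{\varphi}{\varphi-\delta\psi}\right)^t$, sum over $t=0,\dots,k-1$, telescope the Bregman terms, collect coefficients of $r_s$ to identify them with $C_s$, drop the nonnegative $\varphi\beta^{k-1}D_k$, and evaluate $\sum_t C_t$ as a geometric sum to recover the stated denominator. Your algebra (the identity $1-(1-\delta)\beta=\tfrac{\delta(\varphi-\psi)}{\varphi-\delta\psi}=\tfrac{\varphi-\psi}{\delta^{-1}\varphi-\psi}$ and the evaluation $\sum_{t=1}^k C_t = 1-\tfrac{\varphi}{\psi}+\tfrac{\varphi}{\psi}\beta^{k-1}$) checks out and matches what the paper does.
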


\begin{proof}
Let us multiple the inequality \eqref{eq:iter_general_tech} by $\big(\frac{\varphi}{\varphi-\delta\psi}\big)^t$ for iterates $t=0,1,\dots,k-1$ and sum them:

\begin{eqnarray*}
\sum_{t=0}^{k-1} \left ( \frac{\varphi}{\varphi-\delta\psi} \right)^t f_{t+1} 
&\leq& 
\sum_{t=0}^{k-1} \left ( \frac{\varphi}{\varphi-\delta\psi} \right )^t \Big (   (10\delta){n}f_t+ \delta  f_*\Big )
\\
&& \qquad
+\sum_{t=0}^{k-1} \left ( \frac{\varphi}{\varphi-\delta\psi} \right )^t  \left ( \left(\varphi-\delta\psi\right)D_{t}-\varphi D_{t+1}
\right).
\end{eqnarray*}

Rearranging the terms, we get
\begin{eqnarray}
&&\sum_{t=0}^{k-1}  \Big ( \frac{\varphi}{\varphi-\delta\psi} \Big )^t\Big ( f_{t+1}- (1-\delta)f_t- \delta  f_* \Big) 
\label{eq:rcd_mb_partial_0}
\\
&&\qquad \qquad \leq \quad
 \left(\varphi-\delta\psi\right)D_0-\Big ( \frac{\varphi}{\varphi-\delta\psi} \Big )^{k-1}\varphi D_k
 \nonumber
 \\
 &&\qquad \qquad  \leq \quad
  \left(\varphi-\delta\psi\right)D_0.
 \label{eq:rcd_mb_th_summed}
\end{eqnarray}

For simplicity, throughout this proof denote $r_t=f_{t}-f_*$. Let us continue with the bound above:

\begin{eqnarray}
 \left(\varphi-\delta\psi\right)D_0
&\stackrel{\eqref{eq:rcd_mb_th_summed}}{\geq} & 
\left( \frac{\varphi}{\varphi-\delta\psi} \right )^{k-1} f_k+
\sum_{t=1}^{k-1} \left( \frac{\varphi}{\varphi-\delta\psi} \right )^{t-1} \left( f_{t}- (1-\delta)\frac{\varphi}{\varphi-\delta\psi}f_t\right)
\nonumber
\\
&&
\nonumber
\qquad
-(1-\delta)f_0 
-\delta\sum_{t=0}^{k-1}  \left ( \frac{\varphi}{\varphi-\delta\psi} \right )^t f_* 
\\
\nonumber
&\stackrel{}{=}& 
\left ( \frac{\varphi}{\varphi-\delta\psi} \right )^{k-1} f_k+
\sum_{t=1}^{k-1} \left ( \frac{\varphi}{\varphi-\delta\psi} \right )^{t-1} \frac{\varphi-\psi}{\delta^{-1}\varphi-\psi}f_t
\\
&&
\qquad
-(1-\delta)f_0 
-\delta\sum_{t=0}^{k-1}  \left ( \frac{\varphi}{\varphi-\delta\psi} \right )^t f_* 
\label{eq:rcd_mb_partial_1}
\\
&\stackrel{(*)}{=}&  
\left ( \frac{\varphi}{\varphi-\delta\psi} \right )^{k-1} r_{k}+
\sum_{t=1}^{k-1} \left ( \frac{\varphi}{\varphi-\delta\psi} \right )^{t-1} \frac{\varphi-\psi}{\delta^{-1}\varphi-\psi}r_t -(1-\delta)r_0.
\label{RCD_tel}
\end{eqnarray}

Equality $(*)$ is obtained by the fact that the sum of terms corresponding to $f(\cdot)$ is 0 (this can be easily seen as it is equal to \eqref{eq:rcd_mb_partial_0}).

Recall that we have 
\[
C_t
\oo{=}
\begin{cases}\left( \frac{\varphi}{\varphi-\delta\psi} \right )^{t-1} \frac{\varphi-\psi}{\frac{n}{\mb}\varphi-\psi}, & 1\leq t\leq k-1 \\
\left ( \frac{\varphi}{\varphi-\delta\psi} \right )^{k-1}, & t=k.
\end{cases}
\]

and $c_t\eqdef C_t/\sum_{t=1}^kC_t$. Since the sum of terms corresponding to $f_t$ for some $t$ or $f_*$ in \eqref{eq:rcd_mb_partial_1} is 0 (because it is equal to \eqref{eq:rcd_mb_partial_0}), we have

\begin{eqnarray}
\nonumber
\sum_{t=1}^k C_t
&=&
\left ( \frac{\varphi}{\varphi-\delta\psi} \right )^{k-1} +
\sum_{t=1}^{k-1} \left ( \frac{\varphi}{\varphi-\delta\psi} \right )^{t-1} \frac{\varphi-\psi}{\frac{n}{\mb}\varphi-\psi}
\\
\nonumber
&=&
 (1-\delta) +\delta\sum_{t=0}^{k-1}  \Big ( \frac{\varphi}{\varphi-\delta\psi} \Big )^t.
 \\
 \nonumber
 &=&
  (1-\delta) +
\delta\frac{\Big ( \frac{\varphi}{\varphi-\delta\psi} \Big )^{k}-1}{\frac{\varphi}{\varphi-\delta\psi}-1} 
\\
\nonumber
&=&
 (1-\delta) +
\frac{\Big ( \frac{\varphi}{\varphi-\delta\psi} \Big )^{k}-1}
{\frac{\psi}{\varphi-\delta\psi}} 
\\
\nonumber
&=&
 (1-\delta) +
\left(\varphi-\delta\psi\right)\frac{\Big ( \frac{\varphi}{\varphi-\delta\psi} \Big )^{k}-1}
{\psi} 
\\
&=&
1-\frac{\varphi}{\psi}+\frac{\varphi}{\psi}\left ( \frac{\varphi}{\varphi-\delta\psi} \right )^{k-1}.
\label{sum_weights}
\end{eqnarray}

Thus, we can rewrite \eqref{RCD_tel} as follows
\begin{eqnarray*}
\sum_{t=1}^{k}c_tr_t 
&\stackrel{\eqref{RCD_tel}}{\leq} & 
\bigg (\left(\varphi-\delta\psi\right)D_0 +(1-\delta)r_0\bigg)\frac{1}{\sum_{t=1}^kC_t}
\\
&\stackrel{\eqref{sum_weights}}{=}&
  \bigg (\left(\varphi-\delta\psi\right)D_0 +(1-\delta)r_0\bigg) \frac{1}{1-\frac{\varphi}{\psi}+\frac{\varphi}{\psi}\Big ( \frac{\varphi}{\varphi-\delta\psi} \Big )^{k-1}}.
\end{eqnarray*}

\end{proof}

\section{Proofs for Section~\ref{S:sgd}}

\subsection{Proof of Corollary~\ref{cor: sgd O1/sqrt(k)}}

Denote $l_t=(L_t)^{-1}$ for simplicity. It is easy to see that 
$$c_t=L\,l_t,\quad C_k=1+L\sum_{t=1}^{k-1}l_t,\quad \sum_{t=0}^{k-1} c_tl_t=L+L\left(\sum_{t=1}^{k-1} l_t^2\right).$$
Denote 
\[
A=(L-\mu)D_h(x_*,x_0)+\sigma^2L.
\]
Minimizing RHS of \eqref{sgd_dec_Cbound} to obtain the best rate is equivalent to minimize 
\[
\frac{A+\sigma^2L\left(\sum_{t=1}^{k-1} l_t^2\right)}{1+L\sum_{t=1}^{k-1}l_t}. 
\]
Notice that the expression above is minimized for constant $l_t$, as if $l_t\neq l_s$, setting $l_t=l_s=\frac{l_t+l_s}{2}$ leads to strictly smaller value of the expression. Therefore, it suffices to minimize
\[
\frac{A+\sigma^2L(k-1) l^2}{1+L(k-1) l}
\]
in $l$. First order optimality condition yields
\[
2\sigma^2 L(k-1)l (1+L(k-1)l)=(A+\sigma^2 L(k-1)l^2)L(k-1),
\]
which is equivalent to
\[
\sigma^2L(k-1) l^2 +2\sigma^2l-A=0.
\]
The quadratic equation above have a single solution
\[
l=\frac{-\sigma^2+\sqrt{\sigma^4+\sigma^2AL(k-1)}}{\sigma^2L(k-1) } ,
\]
which finishes the proof.

\subsection{Proof of Lemma~\ref{L: sgd good choice of stepsizes}}

For simplicity, denote $l_t=(L_t^{-1})$. Thus, $\{l_t\}$ is nonincreasing sequence. 
Note that the rate from the Theorem~\ref{t: SGD_Ctheorem} is $O(1/k)$ if and only if both
$$ \frac{1}{C_k} \quad \text{and} \quad \sum_{t=0}^{k-1} \frac{c_tl_t}{C_k}$$
are $O(1/k)$. 

Let us now consider that $\{c_t\}$ is nonincreasing for $t\geq T$. 
Suppose that 
\[
1>\liminf \frac{c_{t}}{c_{t-1}}\eqdef r_c.
\]
Then for all $k$ there is $K\geq k$ such that \[1>\frac{1+r_c}{2}>\frac{c_{K}}{c_{K-1}}.\] 
Thus there is infinitely many $t$ such that
 \[1>\frac{1+r_c}{2}>\frac{c_{t}}{c_{t-1}}.\] 
Since $\{c_t\}$ is nonincreasing for $t\geq T$, we have that $\{c_t\}\to 0$ which is a contradiction with the assumption that $\tfrac{1}{C_t}=O(1/t)$. Thus we have

\[
1=\liminf \frac{c_{t}}{c_{t-1}}=\lim \frac{c_{t}}{c_{t-1}}, 
\]
which implies that
\[
\lim L_t-L_{t-1}=\mu.
\]
The above means that $L_t=\Theta(t)$.
We have just proven the lemma for asymptotically nonincreasing $\{c_t\}$.

Now, suppose that $\{c_t\}$ is increasing sequence for $t\geq T$. Then we have for all $t\geq T$ 
\[\tfrac{L_{t-1}}{L_t-\mu}>1.\] Thus $L_t<L_{t-1}+\mu$, which implies that $L_t=O(t)$ and $l_t=\Omega(1/t)$.

On the other hand, looking at $\sum_{t=0}^{k-1} \tfrac{c_tl_t}{C_k}$ as the weighted sum of $l_t$, since $l_{k-1}$ is the smallest from $\{l_t \}$ we immediately have
$$ O(1/k)= \sum_{t=0}^{k-1} \frac{c_tl_t}{C_k} \geq l_{k-1}\geq l_k,$$
which means that $l_t=O(1/t)$. Thus, $l_t=\Theta(1/t)$ and $L_t=\Theta(t)$.

\subsection{Proof of Lemma~\ref{l:sgd_dec_no_mu}}
First, we introduce two technical lemmas. 

\begin{lemma}

Let us fix $\alpha>0$. There exist a convex continuous function $\gamma_\alpha(x)$ on $\R_+$ such that for all $x>0$ we have 

\begin{equation}
\gamma_{\alpha}(x+\alpha)=\log(x)+\gamma_{\alpha}(x).
\label{eq:gamma_def}
\end{equation}
\label{l:log_gamma}
\end{lemma}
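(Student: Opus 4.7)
The plan is to reduce the problem to the classical log--gamma function, which already satisfies an analogous functional equation with shift $1$ and is known to be convex on $\R_+$. By rescaling the argument, we can convert a shift of $1$ into a shift of $\alpha$.

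Concretely, I would define
\[
\gamma_{\alpha}(x) \eqdef \log \Gamma\!\left(\tfrac{x}{\alpha}\right) + \tfrac{x}{\alpha}\log \alpha,
\]
where $\Gamma$ is Euler's gamma function. Then I would verify the three required properties in turn. For the functional equation \eqref{eq:gamma_def}, I would use $\Gamma(y+1) = y\,\Gamma(y)$, so that $\log\Gamma(y+1) - \log\Gamma(y) = \log y$, and compute
\[
\gamma_{\alpha}(x+\alpha) - \gamma_{\alpha}(x)
= \log\Gamma\!\left(\tfrac{x}{\alpha} + 1\right) - \log\Gamma\!\left(\tfrac{x}{\alpha}\right) + \log\alpha
= \log\!\left(\tfrac{x}{\alpha}\right) + \log\alpha
= \log x.
\]
For convexity, I would note that $y \mapsto \log\Gamma(y)$ is convex on $\R_+$ (a classical fact, e.g.\ the Bohr--Mollerup theorem), hence $x \mapsto \log\Gamma(x/\alpha)$ is convex as a composition with the affine map $x\mapsto x/\alpha$. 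The term $\tfrac{x}{\alpha}\log\alpha$ is linear, thus convex, and the sum of two convex functions is convex. Continuity follows similarly, as both summands are continuous on $\R_+$.

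The only subtle point is invoking the convexity of $\log\Gamma$, which relies on either an explicit integral/series representation or the Bohr--Mollerup characterization. Since the paper only needs existence of some convex continuous function solving \eqref{eq:gamma_def}, the fastest route is to cite this standard fact rather than re-prove it; alternatively, one could give a direct construction by setting
\[
\gamma_{\alpha}(x) \eqdef -\tfrac{x}{\alpha}\log\!\tfrac{x}{\alpha} + \tfrac{x}{\alpha} + \sum_{k=0}^{\infty}\left[\log\!\left(\tfrac{x+k\alpha}{(k+1)\alpha}\right) - \tfrac{x}{(k+1)\alpha}\right]\!,
\]
(a rescaled Weierstrass product), which converges locally uniformly and yields convexity termwise; however, verifying these analytic details is more work than the reduction above. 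The main obstacle is therefore purely bookkeeping, not conceptual.
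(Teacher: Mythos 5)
Your proof is correct, but it takes a genuinely different route from the paper's. The paper constructs $\gamma_\alpha$ from scratch: it sets $\gamma_\alpha \equiv 0$ on a seed interval $[1,1+\alpha)$, extends by the recursion \eqref{eq:gamma_def} to $x \geq 1+\alpha$, sets $\gamma_\alpha(x) = -\log x$ for $x < 1$, and then verifies continuity and convexity by gluing together the pieces and inspecting one-sided derivatives at the junction points $\{1+k\alpha\}$. You instead reduce to the classical $\log\Gamma$ via the affine substitution $\gamma_\alpha(x) = \log\Gamma(x/\alpha) + \tfrac{x}{\alpha}\log\alpha$, so that the functional equation drops out of $\Gamma(y+1) = y\Gamma(y)$, and convexity and continuity are inherited from the well-known log-convexity and smoothness of $\Gamma$ (Bohr--Mollerup). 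The trade-off is clear: the paper's construction is elementary and self-contained (no facts about $\Gamma$ are needed), but it requires delicate piecewise checks of differentiability and monotonicity of the derivative across the seams; your construction imports a standard special-function fact and in return gets a $C^\infty$, visibly convex solution with essentially no bookkeeping. Your computation of $\gamma_\alpha(x+\alpha)-\gamma_\alpha(x) = \log(x/\alpha)+\log\alpha=\log x$ is correct, and since the lemma only asserts existence of \emph{some} convex continuous solution, citing Bohr--Mollerup is perfectly adequate here. One could note that $\Gamma_\alpha = \exp(\gamma_\alpha)$ under your construction is $\Gamma_\alpha(x) = \alpha^{x/\alpha}\Gamma(x/\alpha)$, which makes the later recursion $\Gamma_\alpha(x+\alpha)=x\Gamma_\alpha(x)$ and the Gautschi-type bounds in Lemma~\ref{l:gautschi_alpha} transparent.
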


\begin{proof}
We will construct function $\gamma_{\alpha}$ in the following way - Let us set $\gamma_{\alpha}(x)=0$ for $x\in [1,1+\alpha)$. For $x\geq 1+\alpha$ let us set recursively $\gamma_{\alpha}(x+\alpha)=\log(x)+\gamma_{\alpha}(x)$ and for $ x<1 $ let us set $\gamma_{\alpha}(x)=-\log(x)$. Clearly, equality $\eqref{eq:gamma_def}$ holds.

We will firstly prove that $\gamma_{\alpha}$ is continuous on $\R_+$ and differentiable on $R_+\backslash \{1\}$. Let us start with intervals $[1+k\alpha,1+(k+1)\alpha)$ for all $k$. 

Clearly, $\gamma_{\alpha}$ it is continuous and differentiable on $[1,1+\alpha)$. Suppose now inductively that $\gamma_{\alpha}$ is continuous and differentiable on $[1+k\alpha,1+(k+1)\alpha)$ for some $k\geq 0$. Then, for $x\in [1+(k+1)\alpha,1+(k+2)\alpha)$ we have
\[
\gamma_{\alpha}(x)=\log(x-\alpha)+\gamma_{\alpha}(x-\alpha). 
\]
Since both $\log(x-\alpha)$ and $\gamma_{\alpha}(x-\alpha)$ are continuous and differentaible functions on $[1+(k+1)\alpha,1+(k+2)\alpha)$, $\gamma_{\alpha}(x)$ is also continuous and differentaible on $[1+(k+1)\alpha,1+(k+2)\alpha)$. 

Clearly, $\gamma_{\alpha}$ it is continuous and differentiable on $(0,1)$.

It remains to show continuity and differentiability in the points $\{1+k\alpha\}$ for $k\geq 1$ and continuity in $\{1\}$. It is a simple exercise to see the continuity and differentiability in $\{1+\alpha\}$. For $1+k\alpha$ where $k\geq 2$ we can show it inductively -- 
as $\gamma_{\alpha}(x-\alpha)$ and $\log(x-\alpha)$ are continuous and differentiable on $(1+(k-\tfrac12)\alpha,1+(k+\tfrac12)\alpha)$, 
then $\gamma_{\alpha}(x)$ is continuous and differentiable on $(1+(k-\tfrac12)\alpha,1+(k+\tfrac12)\alpha)$ as well and thus it is continuous and differentiable in point $\{1+k\alpha\}$. 
On top of that, $\gamma_\alpha$ is clearly continuous in $\{1\}$. 

We have just proven that $\gamma_\alpha$ is continuous on $\R_+$ and differentiable on $\R_+\backslash \{1\}$.

Now we can proceed with the proof of convexity. We will show that the (sub)derivative of $\gamma_{\alpha}$ is nonegative for all $x>0$. Clearly, $\gamma_{\alpha}'(x)\geq 0$ for $x\in (0,1)$ and subdifferential in $\{1\}$ is nonegative as well. Let us write $x=1+\{x\}_\alpha+k\alpha$, where $0\leq \{x\}_\alpha<\alpha$ and $k\geq -1$. Then we have 

\begin{eqnarray*}
\gamma_{\alpha}'(x)
&=&
\lim_{\epsilon \to 0} 
\frac{ \gamma_{\alpha}(x+\epsilon)-\gamma_{\alpha}(x)}{\epsilon}
\\
&=&
\lim_{\epsilon \to 0} 
\frac{\sum_{i=0}^{k-1}\left(
\log\left( 1+\{x\}_\alpha+i\alpha+\epsilon\right)-
\log\left(1+\{x\}_\alpha+i\alpha\right)
\right)
}
{\epsilon}
\\
&& \qquad
+\frac{\gamma_{\alpha}(1+\{x\}_\alpha+\epsilon)-\gamma_{\alpha}(1+\{x\}_\alpha)}
{\epsilon}
\\
&\stackrel{(*)}{=}&
\lim_{\epsilon \to 0} 
\frac{\sum_{i=0}^{k-1}\left(
\log\left( 1+\{x\}_\alpha+i\alpha+\epsilon\right)-
\log\left(1+\{x\}_\alpha+i\alpha\right)
\right)}
{\epsilon}
\\
&\stackrel{(**)}{\geq}&
0.
\end{eqnarray*}

Equality $(*)$ holds since for small enough $\epsilon$ we have $1+\{x\}_\alpha+\epsilon <2\alpha$ and inequality $(**)$ holds due to the fact that logarithm is an increasing function.

\end{proof}

Denote 
\begin{equation}
\Gamma_{\alpha}(x)\eqdef\exp(\gamma_{\alpha}(x))
\label{eq:Gamma_def}
\end{equation}
 for $\gamma_{\alpha}$ given from Lemma \ref{l:log_gamma}. Thus, $\Gamma_{\alpha}$ is log-convex function satisfying 

\begin{equation}
\Gamma_{\alpha}(x+\alpha)=x\Gamma_{\alpha}(x).
\label{eq:gamma_alpha_rec}
\end{equation}

Note that when $\alpha=1$, function $\gamma$ can be chosen as log Gamma function and thus $\Gamma_1$ can be chosen to be standard Gamma function.

The following lemma is crucial for our analysis, allowing us to bound the ratio of functions $\Gamma_{\alpha}(\cdot)$ with nearby arguments.

\begin{lemma}
Consider a function $\Gamma_{\alpha}$ defined above. Then, we have for all $0\leq s\leq\alpha$ and $x>0$:

\begin{equation}
x^{1-\frac{s}{\alpha}}
\leq
\frac{\Gamma_{\alpha}(x+\alpha)}{\Gamma_{\alpha}(x+s)}
\leq
(x+\alpha)^{1-\frac{s}{\alpha}}.
\label{eq:gautschi_alpha}
\end{equation}
\label{l:gautschi_alpha}
\end{lemma}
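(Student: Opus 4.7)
The plan is to reduce the multiplicative inequality to a two-sided estimate on the increment of $\gamma_\alpha = \log \Gamma_\alpha$, and then to exploit convexity of $\gamma_\alpha$ (established in Lemma~\ref{l:log_gamma}) together with the functional equation \eqref{eq:gamma_def}. Concretely, taking logarithms, the claim is equivalent to
\[
\left(1-\tfrac{s}{\alpha}\right)\log x \;\leq\; \gamma_{\alpha}(x+\alpha) - \gamma_{\alpha}(x+s) \;\leq\; \left(1-\tfrac{s}{\alpha}\right)\log(x+\alpha),
\]
so the whole proof takes place at the level of $\gamma_\alpha$.

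First I would introduce the auxiliary one-variable function $\phi(t) \eqdef \gamma_\alpha(x+t)$, which is convex in $t$ (as a shift of a convex function). Two evaluations of the functional equation \eqref{eq:gamma_def} give the key identities
\[
\phi(\alpha) - \phi(0) \;=\; \gamma_\alpha(x+\alpha) - \gamma_\alpha(x) \;=\; \log x,
\qquad
\phi(2\alpha) - \phi(\alpha) \;=\; \log(x+\alpha).
\]
These are the only two places where the functional equation enters; everything else is pure convexity.

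Next, for $0 \leq s \leq \alpha$ I would use the monotonicity of chord slopes of a convex function. Comparing the chords on $[0,\alpha]$ and $[s,\alpha]$ yields
\[
\frac{\phi(\alpha)-\phi(s)}{\alpha - s} \;\geq\; \frac{\phi(\alpha)-\phi(0)}{\alpha} \;=\; \frac{\log x}{\alpha},
\]
and multiplying by $\alpha - s \geq 0$ gives the lower bound $\phi(\alpha)-\phi(s) \geq (1-s/\alpha)\log x$. Comparing the chords on $[s,\alpha]$ and $[\alpha,2\alpha]$ (the latter lies entirely to the right) yields
\[
\frac{\phi(\alpha)-\phi(s)}{\alpha - s} \;\leq\; \frac{\phi(2\alpha)-\phi(\alpha)}{\alpha} \;=\; \frac{\log(x+\alpha)}{\alpha},
\]
which after multiplying by $\alpha - s$ delivers the upper bound. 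Exponentiating both inequalities and using $\Gamma_\alpha = \exp \gamma_\alpha$ produces exactly \eqref{eq:gautschi_alpha}.

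The proof is almost entirely mechanical once the viewpoint is fixed; the only subtle point is that the argument must work regardless of the sign of $\log x$ or $\log(x+\alpha)$ (which can be negative when $x$ or $x+\alpha$ is less than $1$). This is handled automatically because the chord-slope monotonicity for convex functions is a sign-agnostic statement, and multiplying by $\alpha - s \geq 0$ preserves the direction of the inequality in either case. The main thing to be careful about is choosing the right pair of reference chords — $[0,\alpha]$ for the lower bound and $[\alpha, 2\alpha]$ for the upper bound — since this is precisely the choice that turns the generic convexity estimate into one involving $\log x$ and $\log(x+\alpha)$ via the functional equation.
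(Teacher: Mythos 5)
Your proof is correct and uses essentially the same ingredients as the paper's: convexity of $\gamma_\alpha$ combined with the functional equation $\Gamma_\alpha(x+\alpha)=x\Gamma_\alpha(x)$, just phrased as a chord-slope (three-point) inequality for $\gamma_\alpha$ rather than as Jensen-type interpolation inequalities for $\Gamma_\alpha$. The only cosmetic difference is in the upper bound: the paper writes $x+\alpha$ as a convex combination of $x+s$ and $x+s+\alpha$ to obtain the slightly sharper intermediate estimate $(x+s)^{1-s/\alpha}$ before loosening to $(x+\alpha)^{1-s/\alpha}$, whereas you compare the chord on $[s,\alpha]$ with the one on $[\alpha,2\alpha]$ and land on $(x+\alpha)^{1-s/\alpha}$ directly.
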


\begin{proof}

Using convexity of $\gamma_{\alpha}$ we have
\[
\Gamma_{\alpha}(x+s)
\leq
 \Gamma_{\alpha}(x)^{1-\frac{s}{\alpha}} \Gamma_{\alpha}(x+\alpha)^{\frac{s}{\alpha}}
 \stackrel{\eqref{eq:gamma_alpha_rec}}{=}
 x^{\frac{s}{\alpha}-1}\Gamma_{\alpha}(x+\alpha).
\]
Rearranging the above we obtain
\[
x^{1-\frac{s}{\alpha}} 
\leq
\frac{\Gamma_{\alpha}(x+\alpha)}
{\Gamma_{\alpha}(x+s)} 
.\]

On the other hand, using convexity of $\gamma_{\alpha}$ again we obtain

\[
\Gamma_{\alpha}(x+\alpha)
\leq
\Gamma_{\alpha}(x+s)^{\frac{s}{\alpha}}
\Gamma_{\alpha}(x+s+\alpha)^{1-\frac{s}{\alpha}}
\stackrel{\ref{eq:gamma_alpha_rec}}{=}
(x+s)^{1-\frac{s}{\alpha}}\Gamma_{\alpha}(x+s).
\]
By rearranging the above, we get
\[
\frac{\Gamma_{\alpha}(x+\alpha)}
{\Gamma_{\alpha}(x+s)} 
\leq
(x+s)^{1-\frac{s}{\alpha}}
\leq
(x+\alpha)^{1-\frac{s}{\alpha}}
.
\]
\end{proof}

We can now proceed with the proof of Lemma~\ref{l:sgd_dec_no_mu} itself. 

\begin{proof}
Note that
\begin{eqnarray}
c_t
\nonumber
&=&
\prod_{i=0}^{t-1}\frac{L_i}{L_{i+1}-\mu}
\\
\nonumber
&\stackrel{\eqref{eq:gamma_alpha_rec}}{=}&
\frac{\frac{\Gamma_{\alpha}(L+t\alpha)}{\Gamma_{\alpha}(L)}}{\frac{\Gamma_{\alpha}(L+(t+1)\alpha-\mu)}{\Gamma_{\alpha}(L-\mu+\alpha)}}
\\
&=&
\frac{\Gamma_{\alpha}(L-\mu+\alpha)}{\Gamma_{\alpha}(L)}
\frac{\Gamma_{\alpha}(L+t\alpha)}{\Gamma_{\alpha}(L-\mu+(t+1)\alpha)}.
\label{eq:c_eq_using_gamma_alpha}
\end{eqnarray}

Let us firstly consider the case when $\alpha>\mu$. Choosing $x= L-\mu +t\alpha$ and $s=\mu$ in \eqref{eq:gautschi_alpha} we get

$$ 
(L-\mu+t\alpha)^{1-\frac{\mu}{\alpha}}
\leq
\frac{\Gamma_{\alpha}(L-\mu+(t+1)\alpha)}{\Gamma_{\alpha}(L+t\alpha)}
\leq
(L-\mu+(t+1)\alpha)^{1-\frac{\mu}{\alpha}}. 
$$

The inequality above allows us to get the following bound on $c_t$

\begin{equation}
\frac{\Gamma_{\alpha}(L-\mu+\alpha)}{\Gamma_{\alpha}(L)}(L-\mu+t\alpha)^{\frac{\mu}{\alpha}-1}
\geq 
c_t
\geq
\frac{\Gamma_{\alpha}(L-\mu+\alpha)}{\Gamma_{\alpha}(L)}(L-\mu+(t+1)\alpha)^{\frac{\mu}{\alpha}-1}. 
\label{eq:c_bound_alpha}
\end{equation}

 Clearly, $\{c_t\}$ is decreasing and thus using the bound above we obtain

\begin{eqnarray*}
C_k
&= & \sum_{t=0}^{k-1}c_t 
\quad 
\stackrel{\eqref{eq:c_bound_alpha}}{\geq} 
\quad 
\sum_{t=0}^{k-1}
\frac{\Gamma_{\alpha}(L-\mu+\alpha)}{\Gamma_{\alpha}(L)}
(L-\mu+(t+1)\alpha)^{\frac{\mu}{\alpha} -1}
\\
&=&
\frac{\Gamma_{\alpha}(L-\mu+\alpha)}{\Gamma_{\alpha}(L)}
\sum_{t=0}^{k-1}
(L-\mu+(t+1)\alpha)^{\frac{\mu}{\alpha} -1}
\\
&\stackrel{(*)}{\geq} & 
\frac{\Gamma_{\alpha}(L-\mu+\alpha)}{\Gamma_{\alpha}(L)}
\int_{0}^{k} 
(L-\mu+(t+1)\alpha)^{\frac{\mu}{\alpha} -1} 
dt
\\
&= &
\frac{\Gamma_{\alpha}(L-\mu+\alpha)}{\Gamma_{\alpha}(L)}
\int_{0}^{(k)\alpha} 
(L-\mu+\alpha+t)^{\frac{\mu}{\alpha} -1}
\frac{1}{\alpha} 
dt
\\
&=&
\frac{\Gamma_{\alpha}(L-\mu+\alpha)}{\Gamma_{\alpha}(L)}
\frac{1}{\alpha}
\Big[\frac{(L-\mu+\alpha+t)^{\frac{\mu}{\alpha}}}{\frac{\mu}{\alpha}} \Big]_{t=0}^{k\alpha}
\\
&=&
\frac{\Gamma_{\alpha}(L-\mu+\alpha)}{\Gamma_{\alpha}(L)}
\frac{(L-\mu+(k+1)\alpha)^{\frac{\mu}{\alpha}}-(L-\mu+\alpha)^{\frac{\mu}{\alpha}}}{\mu}
\\
&\stackrel{\eqref{eq:gautschi_alpha}}{\geq} &
 (L-\mu)^{1-\frac{\mu}{\alpha}}
\frac{(L-\mu+(k+1)\alpha)^{\frac{\mu}{\alpha}}-(L-\mu+\alpha)^{\frac{\mu}{\alpha}}}{\mu}.
\end{eqnarray*}

Inequality $(*)$ holds since $(L-\mu+(t+1)\alpha)^{\mu/\alpha-1}$ is decreasing in $t$. 
On the other hand, we have 

\begin{eqnarray*} 
\sum_{t=1}^{k-1}\frac{c_t}{L_t}
&\stackrel{\eqref{eq:c_bound_alpha}}{\leq} &
\sum_{t=1}^{k-1} \frac{\Gamma_{\alpha}(L-\mu+\alpha)}{\Gamma_{\alpha}(L)}
(L-\mu+t\alpha)^{\frac{\mu}{\alpha} -1}
 \frac{1}{L+t\alpha}
\\
&=&
\frac{\Gamma_{\alpha}(L-\mu+\alpha)}{\Gamma_{\alpha}(L)}
\sum_{t=1}^{k-1} \frac{1}{L+t\alpha}
(L-\mu+t\alpha)^{\frac{\mu}{\alpha} -1}
\\
&\stackrel{(*)}{\leq} & 
\frac{\Gamma_{\alpha}(L-\mu+\alpha)}{\Gamma_{\alpha}(L)}\sum_{t=1}^{k-1} 
(L-\mu+t\alpha)^{\frac{\mu}{\alpha} -2}
\\
&\stackrel{(**)}{\leq} & 
\frac{\Gamma_{\alpha}(L-\mu+\alpha)}{\Gamma_{\alpha}(L)}\int_0^k 
(L-\mu+t\alpha)^{\frac{\mu}{\alpha} -2}
 dt
\\
&= &
\frac{\Gamma_{\alpha}(L-\mu+\alpha)}{\Gamma_{\alpha}(L)}
\int_0^{k\alpha} 
(L-\mu+t)^{\frac{\mu}{\alpha} -2} \frac{1}{\alpha}
 dt
 \\
 &=&
\frac{\Gamma_{\alpha}(L-\mu+\alpha)}{\Gamma_{\alpha}(L)}
\frac{1}{\alpha} \left[
\frac{(L-\mu+t)^{\frac{\mu}{\alpha}-1}}{\frac{\mu}{\alpha}-1} 
\right]_0^{k\alpha}
 \\
&=&
\frac{\Gamma_{\alpha}(L-\mu+\alpha)}{\Gamma_{\alpha}(L)} \frac{(L-\mu)^{\frac{\mu}{\alpha}-1}-(L-\mu+k\alpha)^{\frac{\mu}{\alpha}-1}}{\alpha-\mu} 
 \\
 &\stackrel{\eqref{eq:gautschi_alpha}}{\leq} &
 (L-\mu+\alpha)^{1-\frac{\mu}{\alpha}}
\frac{(L-\mu)^{\frac{\mu}{\alpha}-1}-(L-\mu+k\alpha)^{\frac{\mu}{\alpha}-1}}{\alpha-\mu}.
\end{eqnarray*}

Inequality $(*)$ holds due to the fact that $(L+t\alpha)^{-1}\leq(L-\mu+t\alpha)^{-1}$ and inequality $(**)$ holds since $(L-\mu+t\alpha)^{\mu/\alpha-2}$ is decreasing in $t$. 
Thus we have 
\[
\sum_{t=0}^{k-1}\frac{c_t}{L_t}
\leq
\frac1L
+
(L-\mu+\alpha)^{1-\frac{\mu}{\alpha}}
\frac{(L-\mu)^{\frac{\mu}{\alpha}-1}-(L-\mu+k\alpha)^{\frac{\mu}{\alpha}-1}}{\alpha-\mu}.
\]
and we have just proven the first part of the lemma.

Let us now look at the case when $\alpha\leq\mu$. It will be useful to denote $\lfloor \mu\rfloor_\alpha$ as the largest integer such that $\mu - \lfloor \mu\rfloor_\alpha \alpha$ is positive. Denote also \[\{\mu\}_\alpha\eqdef \mu - \lfloor \mu\rfloor_\alpha \alpha.\]
Using \eqref{eq:gamma_alpha_rec} we obtain

\begin{eqnarray*}
\frac{\Gamma_{\alpha}(L+t\alpha)}{\Gamma_{\alpha}(L-\mu+(t+1)\alpha)}
&=&
\frac{\Gamma_{\alpha}(L+t\alpha)(L+t\alpha+\alpha-\mu)(L+t\alpha+2\alpha-\mu)\dots (L+t\alpha+(\lfloor \mu \rfloor_\alpha -1)\alpha-\mu)}{\Gamma_{\alpha}(L+t\alpha+\lfloor \mu \rfloor_\alpha\alpha -\mu)}
\\
&=&
\frac{\Gamma_{\alpha}(L+t\alpha)(L+t\alpha+\alpha-\mu)(L+t\alpha+2\alpha-\mu)\dots (L-\{\mu\}_\alpha+(t -1)\alpha )}{\Gamma_{\alpha}(L-\{\mu\}_\alpha+t\alpha)}.
\end{eqnarray*}

Upper and lower bounding the equality above we get

\begin{eqnarray}
\frac{\Gamma_{\alpha}(L+t\alpha)}{\Gamma_{\alpha}(L-\mu+(t+1)\alpha)}
&\geq&
\frac{\Gamma_{\alpha}(L+t\alpha)}{\Gamma_{\alpha}(L-\{\mu\}_\alpha+t\alpha)}
(L-\mu+(t+1)\alpha)^{\lfloor \mu \rfloor_\alpha -1},
\label{eq:lb_1_lower}
\\
\frac{\Gamma_{\alpha}(L+t\alpha)}{\Gamma_{\alpha}(L-\mu+(t+1)\alpha)}
&\leq&
\frac{\Gamma_{\alpha}(L+t\alpha)}{\Gamma_{\alpha}(L-\{\mu\}_\alpha+t\alpha)}
(L-\{\mu\}_\alpha+(t -1)\alpha )^{\lfloor \mu \rfloor_{\alpha} -1}.
\label{eq:lb_1_upper}
\end{eqnarray}

Using \eqref{eq:gautschi_alpha} we have

\begin{equation}
(L+(t-1)\alpha)^{\frac{\{\mu\}_\alpha}{\alpha}}
\leq
\frac{\Gamma_{\alpha}(L+t\alpha)}{\Gamma_{\alpha}(L-\{\mu\}_\alpha+t\alpha)}
\leq
(L+t\alpha)^{\frac{\{\mu\}_\alpha}{\alpha}}.
\label{eq:pom_gautschi}
\end{equation}

Now we are ready to get upper and lower bound on $c_t$:
\begin{eqnarray}
\nonumber
c_t
&\stackrel{\eqref{eq:c_eq_using_gamma_alpha}}{=}&
\frac{\Gamma_{\alpha}(L-\mu+\alpha)}{\Gamma_{\alpha}(L)}
\frac{\Gamma_{\alpha}(L+t\alpha)}{\Gamma_{\alpha}(L-\mu+(t+1)\alpha)}
\\
\nonumber
&\stackrel{\eqref{eq:lb_1_lower}}{\geq}&
\frac{\Gamma_{\alpha}(L-\mu+\alpha)}{\Gamma_{\alpha}(L)}
\frac{\Gamma_{\alpha}(L+t\alpha)}{\Gamma_{\alpha}(L-\{\mu\}_\alpha+t\alpha)}
(L-\mu+(t+1)\alpha)^{\lfloor \mu \rfloor_\alpha -1}
\\
&\stackrel{\eqref{eq:pom_gautschi}}{\geq}&
\frac{\Gamma_{\alpha}(L-\mu+\alpha)}{\Gamma_{\alpha}(L)}
(L+(t-1)\alpha)^{\frac{\{\mu\}_\alpha}{\alpha}}
(L-\mu+(t+1)\alpha)^{\lfloor \mu \rfloor_\alpha -1}.
\label{eq:big_mu_c_lb}
\\
\nonumber
c_t
&\stackrel{\eqref{eq:c_eq_using_gamma_alpha}}{=}&
\frac{\Gamma_{\alpha}(L-\mu+\alpha)}{\Gamma_{\alpha}(L)}
\frac{\Gamma_{\alpha}(L+t\alpha)}{\Gamma_{\alpha}(L-\mu+(t+1)\alpha)}
\\
\nonumber
&\stackrel{\eqref{eq:lb_1_upper}}{\leq}&
\frac{\Gamma_{\alpha}(L-\mu+\alpha)}{\Gamma_{\alpha}(L)}
\frac{\Gamma_{\alpha}(L+t\alpha)}{\Gamma_{\alpha}(L-\{\mu\}_\alpha+t\alpha)}
(L-\{\mu\}_\alpha+(t -1)\alpha )^{\lfloor \mu \rfloor_{\alpha} -1}
\\
&\stackrel{\eqref{eq:pom_gautschi}}{\leq}&
\frac{\Gamma_{\alpha}(L-\mu+\alpha)}{\Gamma_{\alpha}(L)}
(L+t\alpha)^{\frac{\{\mu\}_{\alpha}}{\alpha}}
(L-\{\mu\}_\alpha+(t -1)\alpha )^{\lfloor \mu \rfloor_{\alpha} -1}
\label{eq:big_mu_c_ub}
\end{eqnarray}

Recall that we have $m_\mu=\max(\alpha,\mu-\alpha)$. Then, we can get the following bound on $C_k:$

\begin{eqnarray}
C_k-c_0
&=&\sum_{t=1}^{k-1}c_t
\nonumber
\\
\nonumber
&\stackrel{\eqref{eq:big_mu_c_lb}}{\geq} &
\sum_{t=1}^{k-1}
\frac{\Gamma_{\alpha}(L-\mu+\alpha)}{\Gamma_{\alpha}(L)}
(L+(t-1)\alpha)^{\frac{\{\mu\}_\alpha}{\alpha}}
(L-\mu+(t+1)\alpha)^{\lfloor \mu \rfloor_\alpha -1}
\\
\nonumber
&\stackrel{\eqref{eq:m_mu_def}}{\geq}&
\sum_{t=1}^{k-1}
\frac{\Gamma_{\alpha}(L-\mu+\alpha)}{\Gamma_{\alpha}(L)}
(L-m_{\mu}+t\alpha)^{\frac{\{\mu\}_\alpha}{\alpha}}
(L-m_{\mu}+t\alpha)^{\lfloor \mu \rfloor_\alpha -1}
\\
\nonumber
&=&
\sum_{t=1}^{k-1}
\frac{\Gamma_{\alpha}(L-\mu+\alpha)}{\Gamma_{\alpha}(L)}
(L-m_{\mu}+t\alpha)^{\frac{\mu}{\alpha}-1}
\\
\nonumber
&=&
\frac{\Gamma_{\alpha}(L-\mu+\alpha)}{\Gamma_{\alpha}(L)}
\sum_{t=1}^{k-1}
(L-m_{\mu}+t\alpha)^{\frac{\mu}{\alpha}-1}
\\
\nonumber
&\stackrel{(*)}{\geq} & 
\frac{\Gamma_{\alpha}(L-\mu+\alpha)}{\Gamma_{\alpha}(L)}
\int_{0}^{k-1}
(L-m_{\mu}+t\alpha)^{\frac{\mu}{\alpha}-1}
 dt
\\
\nonumber
&= &
\frac{\Gamma_{\alpha}(L-\mu+\alpha)}{\Gamma_{\alpha}(L)}
\int_{0}^{(k-1)\alpha} 
(L-m_{\mu}+t)^{\frac{\mu}{\alpha}-1}\frac{1}{\alpha}
 dt
\\
\nonumber
&= &
\frac{\Gamma_{\alpha}(L-\mu+\alpha)}{\Gamma_{\alpha}(L)}\frac{1}{\alpha}
\Big[\frac{(L-m_\mu+t)^{\frac{\mu}{\alpha}}}{\frac{\mu}{\alpha}} \Big]_{t=0}^{(k-1)\alpha}
\\
&=&
\frac{\Gamma_{\alpha}(L-\mu+\alpha)}{\Gamma_{\alpha}(L)}
\frac{(L-m_{\mu}+(k-1)\alpha)^{\frac{\mu}{\alpha}}-(L-m_{\mu})^{\frac{\mu}{\alpha}}}{\mu}.
\label{eq:big_mu_C}
\end{eqnarray}

Inequality $(*)$ holds since $(L-m_\mu+t\alpha)^{\mu/\alpha-1}$ is increasing function. 
Note that in the case when $\alpha=\mu$, all bounds above hold with equality and we have
\[
C_k= k.
\]

To finish the proof of the second and third part of the Lemma, it remains to upper bound $\sum_{t=0}^{k-1} c_tL_t^{-1}$. Firstly, note that

\begin{eqnarray}
\nonumber
\frac{c_t}{L_t}
&\stackrel{\eqref{eq:big_mu_c_ub}}{\leq} &
\frac{\Gamma_{\alpha}(L-\mu+\alpha)}{\Gamma_{\alpha}(L)}
(L+t\alpha)^{\frac{\{\mu\}_{\alpha}}{\alpha}}
(L-\{\mu\}_\alpha+(t -1)\alpha )^{\lfloor \mu \rfloor_{\alpha} -1}
(L+t\alpha)^{-1}
\\
\nonumber
&\stackrel{(*)}{\leq} &
\frac{\Gamma_{\alpha}(L-\mu+\alpha)}{\Gamma_{\alpha}(L)}(L+t\alpha)^{\frac{\mu}{\alpha}-1}(L+t\alpha)^{-1}
\\
&= &
\frac{\Gamma_{\alpha}(L-\mu+\alpha)}{\Gamma_{\alpha}(L)}(L+t\alpha)^{\frac{\mu}{\alpha}-2}.
\label{eq:c_over_l_ub}
\end{eqnarray}

Inequality $(*)$ holds due to the fact that $L-\{\mu\}_\alpha+(t -1)\alpha\leq L+t\alpha$.  We can continue bounding as follows

\begin{eqnarray}
\nonumber
\sum_{t=1}^{k-1}\frac{c_t}{L_t}
&\stackrel{\eqref{eq:c_over_l_ub}}{\leq} &
\sum_{t=1}^{k-1}\frac{\Gamma_{\alpha}(L-\mu+\alpha)}{\Gamma_{\alpha}(L)}(L+t\alpha)^{\frac{\mu}{\alpha}-2}
\\
&=&
\nonumber
\frac{\Gamma_{\alpha}(L-\mu+\alpha)}{\Gamma_{\alpha}(L)}
\sum_{t=1}^{k-1}
(L+t\alpha)^{\frac{\mu}{\alpha}-2}
\\
&\stackrel{(*)}{\leq} & 
\nonumber
\frac{\Gamma_{\alpha}(L-\mu+\alpha)}{\Gamma_{\alpha}(L)}
\int_{0}^{k}
(L+t\alpha)^{\frac{\mu}{\alpha}-2}
 dt
\\
&=& 
\nonumber
\frac{\Gamma_{\alpha}(L-\mu+\alpha)}{\Gamma_{\alpha}(L)}
\int_{0}^{k\alpha}
(L+t)^{\frac{\mu}{\alpha}-2}
 \frac{1}{\alpha}dt
\\
&\stackrel{(**)}{=}& 
\label{eq:big_mu_sum_cl}
\begin{cases}
 \frac{\log(L+k\mu)-\log(L)}{\mu}
&\text{if $\alpha=\mu$,}
\\ 
\frac{\Gamma_{\alpha}(L-\mu+\alpha)}{\Gamma_{\alpha}(L)}
   \frac{(L+k\alpha)^{\frac{\mu}{\alpha}-1}-L^{\frac{\mu}{\alpha}-1}}{\mu-\alpha} 
  &\text{if $\alpha<\mu $.}
  \end{cases}
\end{eqnarray}

Inequality $(*)$ holds due to the fact that for $\mu\geq 2\alpha$ we have \[\sum_{t=1}^{k-1}
(L+t\alpha)^{\frac{\mu}{\alpha}-2} \leq \int_{1}^{k}
(L+t\alpha)^{\frac{\mu}{\alpha}-2}
 dt\]
and for $\mu< 2\alpha$ we have 
\[\sum_{t=1}^{k-1}
(L+t\alpha)^{\frac{\mu}{\alpha}-2} \leq \int_{0}^{k-1}
(L+t\alpha)^{\frac{\mu}{\alpha}-2}
 dt.
 \]
Equality $(**)$ holds since

\[
\int_{0}^{k\mu}
(L+t)^{-1}
 \frac{1}{\mu}dt
 =
 \frac{1}{\mu}
 \left[
 \log(L+t) 
 \right]_{t=0}^{k\mu}
 =
 \frac{\log(L+k\mu)-\log(L)}{\mu}
\]
and 
\[
\int_{0}^{k\alpha}
(L+t)^{\frac{\mu}{\alpha}-2}
 \frac{1}{\alpha}
 dt
 =
 \frac{1}{\alpha}
 \left[
 \frac{(L+t)^{\frac{\mu}{\alpha}-1}}{\frac{\mu}{\alpha}-1}
 \right]_{t=0}^{k\alpha}
 =
  \frac{(L+k\alpha)^{\frac{\mu}{\alpha}-1}-L^{\frac{\mu}{\alpha}-1}}{\mu-\alpha}
\]
for $\alpha<\mu$.

To finish the proof, let us now consider the special case when $\alpha=\tfrac{\mu}{2}$ (in other words $L_t=L+t\tfrac{\mu
}{2}$). Note that we have 
\[
\frac{\Gamma_{\alpha}(L-\mu+\alpha)}{\Gamma_{\alpha}(L)}
=
\frac{\Gamma_{\alpha}(L-\alpha)}{\Gamma_{\alpha}(L)}
=
\frac{1}{L-\alpha}
=
\frac{1}{L-\frac{\mu}{2}}
.
\]
Thus, according to \eqref{eq:big_mu_C} and \eqref{eq:big_mu_sum_cl} we have
\begin{eqnarray}
\nonumber
C_k 
&
\stackrel{\eqref{eq:big_mu_C}}{\geq}  
&
1+
\frac{1}{L-\frac{\mu}{2}}
\frac{(L-m_{\mu}+(k-1)\frac{\mu}{2})^2-(L-m_{\mu})^2}{\mu}
\quad
=
\quad
1+
\frac{(L+(k-2)\frac{\mu}{2})^2-(L-\frac{\mu}{2})^2}{(L-\frac{\mu}{2})\mu}
\\
&=&
\frac{(L+(k-2)\frac{\mu}{2})^2-(L-\frac{\mu}{2})^2+(L-\frac{\mu}{2})\mu}{(L-\frac{\mu}{2})\mu}
\label{eq:mu_2_C}
\end{eqnarray}

\begin{eqnarray}
\sum_{t=0}^{k-1}
\frac{c_t}{L_t}
\quad
\stackrel{\eqref{eq:big_mu_sum_cl}}{\leq}
\quad
\frac1L+
\frac{1}{L-\frac{\mu}{2}}
   \frac{(L+k\frac{\mu}{2})^{1}-L^{1}}{\frac{\mu}{2}}
   \quad
   =
   \quad
   \frac1L+
   \frac{k}{L-\frac{\mu}{2}}.
   \label{eq:mu_2_cl}
\end{eqnarray}

Combining \eqref{eq:mu_2_C}, \eqref{eq:mu_2_cl} with Theorem \ref{t: SGD_Ctheorem} we obtain 

 \begin{eqnarray*}
 \sum_{t=1}^k \frac{c_{t-1}}{C_k} \Exp{f(x_t)-f(x_*)}
 &\leq &
\frac{(L-\mu)D_h(x_*,x_0)}{\frac{(L+(k-2)\frac{\mu}{2})^2-(L-\frac{\mu}{2})^2+(L-\frac{\mu}{2})\mu}{(L-\frac{\mu}{2})\mu}}
+
\sigma^2 \frac{\frac1L+ \frac{k}{L-\frac{\mu}{2}}}{\frac{(L+(k-2)\frac{\mu}{2})^2-(L-\frac{\mu}{2})^2+(L-\frac{\mu}{2})\mu}{(L-\frac{\mu}{2})\mu}}
\\
&=&
\frac{(L-\mu)(L-\frac{\mu}{2})\mu D_h(x_*,x_0)+\sigma^2\mu (1-\frac{\mu}{2L }+k) }{(L+(k-2)\frac{\mu}{2})^2-(L-\frac{\mu}{2})^2+(L-\frac{\mu}{2})\mu}
\end{eqnarray*}

which concludes the proof.
\end{proof}

\newpage
\section{Notation Glossary}

\begin{table}[!h]
\begin{center}
\begin{tabular}{|c|l|c|}
\hline
\multicolumn{3}{|c|}{{\bf Standard} }\\
\hline
$\R$ & set of real numbers &\\
$\R^n_+$ & set of positive vectors in $\R^n$ &\\ 
$\Expnot$ & Expectation &\\
$\Prob$ & Probability &\\
$\log$ & natural logarithm & \\
$\langle \cdot ,\cdot \rangle$ & Euclidean inner product  &\\
$\| \cdot \|$ & standard Euclidean norm &\\
$D_h(x,y)$ & Bregman distance between $x,\ y$ & \eqref{eq:bregman_def}\\
$\Gamma_a$ & generalization of the Gamma function & \eqref{eq:Gamma_def} \\

 \hline
 \multicolumn{3}{|c|}{{\bf Global }}\\
 \hline
 $f$ & objective to be minimized over set $Q\subseteq \R^{n}$ & \eqref{Eq:optimization_problem}\\
$x_*$ & minimizer of $f$ over $Q$ & \eqref{Eq:optimization_problem} \\
$\nabla f(x)$ & gradient of $f$ at $x$ &\\
$h$ & reference function, $f$ is rel-smooth with respect to $h$ & \eqref{eq:relsmooth} \\
$L$ & smoothness parameter ($f$ is $L$-smooth relative to $h$) &\eqref{eq:relsmooth} \\
$\mu$ & strong convexity parameter ($f$ is $\mu$-strongly convex relative to $h$) & \eqref{eq:relsc}\\
$x_{(t+1,*)}$ &  next iterate from Algorithm \algD & \\
$x^{(i)}$ & $i$-th coordinate of $x\in \R^n$ & \\
$\ones$ & $n$ dimensional vector of ones & \\
$\ones^i$ & $i$--th column of $n\times n$ identity matrix & \\

 \hline
 \multicolumn{3}{|c|}{{\bf relRCD} (Section~\ref{S:rcd})}\\
 \hline
$\mb$ & minibatch size & \\
$\alpha(h)$ & symmetry measure & \eqref{eq:alfa} \\
$\hat{S}$ & a random subset of $\{1,2,\dots,n\}$ &  \\
$p_0$ & scalar such that $\Prob(i\in \hat{S})=p_0$ for all $i = 1,2,\dots,n$ &  \\
$v$ & parameter vector for ESO & \eqref{eq:eso_def} \\
$w$ & parameter vector for strong convexity & \eqref{eq:sc_vec_def} \\
$\Delta$ & $\min_{i} w^{(i)}/ v^{(i)}$ & \eqref{eq:sc_vec_def} \\

 \hline
 \multicolumn{3}{|c|}{{\bf relSGD } (Section~\ref{S:sgd})}\\
 \hline 
$L_t$ & stepsize controlling parameter & \\
$c_t$ & technical tool for analysis & \\
$\sigma^2$ & global bound on $L_t \Exp{\langle \nabla f(x_t)- \widetilde{g}_t, x_{t+1} -x_{t+1}\rangle \;|\; x_t} $ & \eqref{eq:sgd_variance_assump}\\
$\alpha$ & increase rate of $L_t$ in Lemma~\ref{l:sgd_dec_no_mu}&  \\

\hline
\end{tabular}
\end{center}
\caption{Summary of frequently used notation.}
\label{tbl:notation}
\end{table}

\end{document}